\renewcommand{\d}{\,\mathrm{d}}
\newcommand{\p}{\mathbb{P}}
\newcommand{\E}{\mathbb{E}}    
\newcommand{\R}{\mathbb{R}}    
  \newcommand{\id}{\mathds{1}} 
\newcommand{\cD}{\mathcal{D}}
\newcommand{\cG}{\mathcal{G}}
\newcommand{\com}[1]{\marginpar{{\begin{minipage}{0.26\textwidth}{\setstretch{1.1} \begin{flushleft} \footnotesize \color{red}{#1} \end{flushleft} }\end{minipage}}}}
\theoremstyle{plain}
\newtheorem{theorem}{Theorem}
\newtheorem{lemma}{Lemma}
\newtheorem{proposition}{Proposition}
\theoremstyle{definition}
\newtheorem{example}{Example}
\newtheorem{fact}{Fact}
\theoremstyle{remark}
\newtheorem{remark}{Remark}
 \renewcommand{\cite}{\citet}  
\title{False discovery rate control with e-values} 
\author{
  Ruodu Wang\thanks%
  {Department of Statistics and Actuarial Science,
  University of Waterloo.
  E-mail: \href{mailto:wang@uwaterloo.ca}{wang@uwaterloo.ca}.}
\and  
Aaditya Ramdas\thanks%
    {Departments of Statistics and Machine Learning,
    Carnegie Mellon University.
    E-mail: \href{mailto:aramdas@cmu.edu}{aramdas@cmu.edu}.}
}
\begin{document}

\maketitle

\begin{abstract}
E-values have gained attention as potential alternatives to p-values as measures of uncertainty, significance and evidence. In brief, e-values are realized by random variables with expectation at most one under the null; examples include betting scores, (point null) Bayes factors, likelihood ratios and stopped supermartingales. We design a natural analog of the Benjamini-Hochberg (BH) procedure for false discovery rate (FDR) control that utilizes e-values, called the e-BH procedure, and compare it with the standard procedure for p-values. One of our central results is that, unlike the usual BH procedure, the e-BH procedure controls the FDR at the desired level---with no correction---for any dependence structure between the e-values. We illustrate that the new procedure is convenient in various settings of complicated dependence, structured and post-selection hypotheses, and multi-armed bandit problems. Moreover, the BH procedure is a special case of the e-BH procedure through calibration between p-values and e-values. Overall, the e-BH procedure is a novel, powerful and general tool for multiple testing under dependence, that is complementary to the BH procedure, each being an appropriate choice in different applications. 

~

\noindent \textbf{Keywords}: multiple testing, FDR, p-values, betting scores,  supermartingales
\end{abstract}


 \section{Introduction}
 \label{sec:1}

We study procedures for controlling the false discovery rate (FDR) as in \cite{BH95}. We will encounter p-values in this paper, but focus more on  e-values and e-tests as in \cite{VW20}. 
We use ``e-value" as an abstract umbrella term which encompasses betting scores, likelihood ratios, and stopped supermartingales, which appear in the recent literature, e.g., \cite{S20, GDK20,WRB20} and \cite{HRMS20,HRMS21}. (Since Bayes factors~\citep{KR95} for point nulls are also e-values, one may hope that e-values appeal to adherents of frequentist, Bayesian and game-theoretic foundations of probability, but this paper abstains from further philosophical discussion, treating e-values as a useful technical tool that arises naturally in many situations.)  

Throughout, let $H_1,\dots,H_K$ be $K$ hypotheses, and write $\mathcal K=\{1,\dots,K\}$.
Let the true (unknown) data-generating  probability measure be denoted by $\p$.
For each $k\in \mathcal K$, 
it is useful to think of hypothesis $H_k$ as implicitly defining a set of joint probability measures, and $H_k$ is called a true null hypothesis if $\p \in H_k$.

Following \cite{VW20}, 
a \emph{p-variable} $P$ is a  random variable  that satisfies $\p(P\le \alpha)\le \alpha$ (often with equality) for all $\alpha \in (0,1)$. 
In other words, a p-variable is stochastically larger than $\mathrm {U}[0,1]$ (values of $P$ larger than $1$ can be treated as $1$). An \emph{e-variable} $E$ is a $[0,\infty]$-valued random variable satisfying $\E[E]\le1$. 
E-variables are often obtained from stopping an \emph{e-process} $(E_t)_{t \geq 0}$, which is a nonnegative stochastic process adapted to a pre-specified filtration such that $\mathbb{E}[E_\tau] \leq 1$ for any stopping time $\tau$ (an example would be a supermartingale with initial value $1$).

Let $\mathcal N\subseteq \mathcal K$ be the set of indices of true null hypotheses, which is unknown to the decision maker, and $K_0$ be the number of true null hypotheses, thus the cardinality of $\mathcal N$. The ratio $K_0/K$ may be close to $1$, meaning that the signals are sparse.

 For reasons to use  e-values over p-values, see \cite{S20}, \cite{VW20} and \cite{GDK20}; however, we summarize our own perspectives later in Section \ref{sec:motivation}. 
We consider two  settings of testing multiple hypotheses, and sometimes convert between them:
\begin{enumerate}
\item  For each $k\in \mathcal K$, 
  $H_k$ is  associated with p-value $p_k$, which is a realization of a random variable $P_k$. If $k\in \mathcal N$, then $P_k$ is a p-variable.  
\item For each $k\in \mathcal K$, 
  $H_k$ is  associated with e-value $e_k$, which is a realization of a random variable $E_k$. If $k\in \mathcal N$, then $E_k$ is an e-variable.  
\end{enumerate}

A \emph{p-testing procedure} $\cD:[0,1]^K\to 2^{\mathcal K}$ (resp.~an \emph{e-testing procedure} $\cD:[0,\infty]^K\to 2^{\mathcal K}$) 
gives the indices of rejected hypotheses based on observed p-values (resp.~e-values).  
We tacitly require that all testing procedures are Borel functions. 
The terms ``p-values/e-values" refer to both the random variables and their realized values; these should be clear from the context. 
  
 Let $\cD$ be a  p-testing procedure or an e-testing procedure. 
The rejected hypotheses by  $\cD$ are called discoveries. 
We write $F_{\cD}:=|\cD \cap \mathcal N|$ as the number of true null hypotheses that are rejected (i.e., false discoveries), and $R_{\cD}:=|\cD|$ as the total number of discoveries. 
 The value of interest is   $F_{\cD}/R_{\cD}$, called the false discovery proportion (FDP), which is the ratio of the number of false discoveries to that of all claimed discoveries, with the convention $0/0=0$ (i.e., FDP is $0$ if there is no discovery). Since both quantities  $F_{\cD}$ and $R_{\cD}$ are random,  \cite{BH95} proposed to control FDR, which is the expected value of FDP, that is,
  $ 
 \mathrm{FDR}_{\cD}:=\E [ {F_{\cD}}/{R_{\cD} } ],
$ 
where the expected value is taken  under the true probability.   
Other ways of controlling the false discovery rather than the FDR are studied by, for instance, 
\cite{Genovese/Wasserman:2004,Genovese/Wasserman:2006JASA} and \cite{Goeman/Solari:2011} with p-values, and  \cite{Vovk19} with e-values. We focus on controlling FDR  in this paper for its popularity in modern sciences.


For those unfamiliar with e-values, we offer a couple of quick remarks. First, an e-variable $E$ can be converted to a p-variable $P = 1/E$; the validity of $P$ can be checked using Markov's inequality. However, since Markov's inequality is not tight, such a translation is not airtight. A p-variable $P$ can also be converted to an e-variable, but this is more complex; one simple example is to set $E = P^{-1/2} - 1$ (and more generally $f(P)$ is an e-variable if $\int_0^1 f(u)\d u \leq 1$). Importantly, small p-values correspond to large e-values, so we will reject e-values above some threshold.

\subsection*{A brief summary of our contributions}
 
For $k\in \mathcal K$, let $e_{[k]}$ be the $k$-th order statistic of $e_1,\dots,e_K$, from \emph{the largest to the smallest}. 
We design a simple e-value analog of the standard BH procedure, which will be called the \emph{base e-BH procedure}.  
For $\alpha>0$, define the e-testing procedure $\cG(\alpha):[0,\infty]^K\to 2^{\mathcal K}$ which rejects hypotheses with the largest $k_e^*$ e-values,
where 
\begin{equation} 
\label{eq:e-k-intro} 
k_e^*=\max\left\{k\in \mathcal K: \frac{k e_{[k]}}{K} \ge \frac{1}{\alpha}\right\},\end{equation} 
with the convention $\max(\varnothing)=0$, 
and accepts the rest.

The astute reader may note that since $1/e_k$ is a valid p-value, the base e-BH procedure is simply the BH procedure applied to the corresponding p-values. Hence, at first glance, apparently nothing is gained with the e-value viewpoint. 
However, one of our central results is the surprising property that \begin{quote} the base e-BH procedure controls FDR at level $\alpha$ even under unknown \emph{arbitrary dependence} between the e-values. \end{quote} It is well known that such a statement is not true for general arbitrarily dependent p-values. 
Moreover, the \emph{full e-BH procedure}, to be specified in Section~\ref{sec:4}, involves a pre-screening of e-values, allowing us to ``boost'' them up by a factor before feeding them to the base e-BH procedure.

We also derive several important, but secondary, results. These include an analysis of self-consistent e-testing procedures, the fact that known results on FDR control for BH and BY \citep{BY01} procedures can be derived as special cases of the e-BH procedure, and the optimality of e-BH procedure amongst all FDR controlling procedures acting on arbitrarily dependent e-values.

Finally, the paper contains several other tertiary contributions, including a mathematical study of how conservative the e-BH procedure is under arbitrary dependence, independence and PRDS (positive regression dependence on a subset). We provide a few interesting motivating examples along the way: identifying skillful traders (finance) and
identifying promising slot machines by adaptive sampling (multi-armed bandits). Simulation results on the e-BH procedure are produced in Section \ref{sec:simulation} and Appendix~\ref{app:num}. 
A real-data analysis on  returns of cryptocurrencies is produced in Section \ref{sec:realdata}.
Proofs of most results are put in Appendix \ref{app:pf}. 

Before delving into the above, it seems critical to first answer a motivating question: why use e-values in the first place?

\section{When might one prefer e-values over p-values?}\label{sec:motivation}

Given that the Benjamini-Hochberg procedure using p-values is so widely established and accepted in the sciences, one must first answer the question of why one should care about FDR control (or multiple testing) with e-values. This is a philosophical, subjective, question and the opinions of the authors (and readers) are complementary to the contributions of this paper. 
For example, in his upcoming discussion paper, \cite{S20} eloquently provides various arguments for the use of ``betting scores", which are e-values with a game-theoretic interpretation, as measures of evidence and uncertainty in the scientific enterprise while \cite{GDK20} find them to be a middle ground between Bayesian and frequentist (and financial, when combined with the earlier view) interpretations of evidence.
Other authors have also already weighed in on the subject, but rather than deferring to them, we wish to offer our own perspectives in light of recent research. 

Below, we list some situations when one may potentially prefer to use e-values over p-values for non-philosophical, purely statistical, reasons. To avoid confusion, we note that \emph{the following discussion relates to the construction of a single e-value or a single p-value}---in other words, it applies equally to single and multiple hypothesis testing.

\begin{enumerate}
    \item \textbf{High-dimensional asymptotics}. One of the most classical ways to compute p-values is to use the asymptotic distribution of the likelihood ratio test statistic, as given by Wilks' theorem (\cite{Wilks38}). However, the correctness of Wilks' theorem is typically justified when the dimensionality $d$ of the data remains fixed, and the sample size $n$ tends to infinity. There are several results on the high-dimensional asymptotics of likelihood ratios, but the resulting p-value relies on the practitioner making assumptions on the relative scalings of $n$ and $d$; see \cite{JY13} for example. In contrast,  the likelihood ratio for a point null hypothesis is a valid e-value in finite samples, meaning that its expectation equals one under the null regardless of $d$ or $n$. The same holds for mixtures (over the alternative) of likelihood ratios as well.
    \item \textbf{Irregular (composite) models}. There are many composite null hypothesis testing problems for which we may know of no direct way to construct a valid p-value even under low-dimensional asymptotics---this could happen because the model is singular or irregular and Wilks' theorem fails to hold \citep{D09} and in such cases the validity of the bootstrap is also typically unknown. Recently, the \emph{split} likelihood ratio statistic~\citep{WRB20} was developed, along with several other variants, and shown to yield an e-value under no assumptions on the composite null, or on $d$ and $n$. Examples of new settings in which one can now construct e-values under no regularity assumptions include mixtures (e.g., testing if data comes from a mixture of $\leq k$ versus $>k$ components), shape-constraints (e.g., log-concavity), dependence structures (e.g., multivariate total positivity), and several latent variable models.
    \item \textbf{Sequential inference}. The correctness of Wald's sequential likelihood ratio test is based on noting that the likelihood ratio process $(L_t)$ is a nonnegative martingale with initial value one. Thus, at any stopping time $\tau$, the optional stopping theorem implies that $L_\tau$ is an e-value since its expectation is at most one; in other words, $(L_t)$ is an e-process. Moving beyond parametric settings, sequential inference is often enabled by designing nonparametric supermartingales~\citep{HRMS20} that immediately yield e-values at the stopping time. These have been called test martingales in prior work~\citep{SSVV11}. In summary, e-values arise very naturally in sequential inference as stopped {e-processes}. 
    For point nulls, e-processes can always be dominated by nonnegative martingales (meaning they can be increased to a martingale without threat to validity). But for composite nulls, there is a big and important difference between e-processes, supermartingales and martingales. See~\cite{ramdas2020admissible,ramdas2021can} for details.

    \item \textbf{Accumulation of information and evidence}. 
    Suppose the ultimate goal of a scientist is to either reject (with level $\alpha$) or not reject a given hypothesis \emph{in a single run}. Then, the most powerful method is to reject a p-value $p$ no larger than $\alpha$. This testing procedure can also be  achieved by a simple e-value  $e:= \id_{\{p\le \alpha\}} / \alpha$, using a threshold $1/\alpha$ (called an all-or-nothing bet by~\cite{S20}); in fact, no other e-values would be more powerful in general. The situation becomes quite different when the hypothesis (if seen as promising by the first experiment) will be tested with future evidence and possibly by other scientists: the all-or-nothing e-value carries little information for the next studies, whereas a generic e-value can provide a continuum of evidence strength. Even if the first e-value  is moderate (e.g., $e_1=3$), it is useful and can be easily merged into the next experiment~\citep{VW20}, for example by simply multiplying the two e-values (this is valid if $\mathbb{E}[E_2|E_1=e_1]\leq 1$ under the null) or averaging them (always valid). On the other hand, a p-value of $0.1$ is difficult to use for future studies that may depend on data from the current one, and may be discarded (or worse, ``p-hacked") due to the all-or-nothing nature of the p-test or the difficulty of merging p-values under such sequential dependence as above. Hence, for a \emph{dynamic flow} of experiments, common in modern sciences, it is beneficial to track and report e-values --- see \cite{S20} and \cite{GDK20}, as well as their counterpart in estimation,  confidence sequences~\citep{HRMS21}.
    
    \item \textbf{Robustness to misspecfication}. In genetics, it is not uncommon to encounter p-values with astronomically small values (like $10^{-20}$), or sometimes point masses near the value one, even though the sample sizes may not intuitively support such extreme evidence. This is often (but not always) reflective of utilizing a model that is not perfectly specified. The validity of p-values is quite sensitive to model misspecification, because they utilize the entire (hypothesized) distribution of the test statistic. In contrast, e-values can be constructed without over-reliance on fine-grained tail information, and so they are typically more robust than p-values to misspecification (but less powerful under perfect specification). For example, instead of assuming that the data $X$ is Gaussian to build a p-value, we may instead assume that it is symmetric about the origin (under no further moment constraints), in which case $\exp(\lambda X - \lambda^2 X^2/2)$ is a valid e-value for any $\lambda \in \mathbb R$. The proof, by Bahadur and Eaton, was communicated by \cite{E69}. (However, it is inadmissible; admissible improvements can be found in~\cite{ramdas2020admissible}.)
    
    \item \textbf{Robustness to dependence}. To construct a single valid p-value, it is often assumed (for convenience of deriving limiting distributions) that the underlying observations are independent, and indeed their validity is often hurt if this assumption is violated. However, we can often construct an e-value quite easily in settings where the observations are dependent, and this is because we are requiring less of an e-value (just bounded in expectation) than of a p-value (knowledge of its whole distribution). As one example, suppose we observe non-negative data $X_1,\dots,X_n$, and we wish to test $H_0: \mathbb E[X_i] \leq \mu$ for all $i$. Then, $(X_1 +\dots+X_n)/(n\mu)$ is an e-value for any dependence structure of   $X_1,\dots,X_n$, and does not require making any distributional assumption. 
    In addition, combination of multiple p-values relies heavily on dependence assumptions \citep{VW19}, unlike that of e-values \citep{VW20}.
    In the field of risk management,  analysis of risks with unknown or complicated dependence has recently been an active topic; see e.g., \citep{EWW15}.   
    \end{enumerate}
    
Thus, there exist many settings when one can, and should, use e-values to quantify evidence against a null. (However, there of course remain innumerable situations in which p-values are perfectly reasonable choices.)

To reflect some of the above points, in a concrete example below, we will see that a complicated dependence structure appears both within experiment and across experiments, and one can easily construct useful e-values in such situations, with almost no model assumptions on the test statistics.  
\begin{example}  \label{ex:21}
Suppose there are $K$ traders (or machines), and a researcher is interested in knowing which ones are skillful (or useful). This is a classic problem in finance; see e.g., \cite{BSW10} in the context of detecting mutual fund skills. 
For $k=1,\dots,K$,
the null hypothesis $H_k$ is that trader $k$ is not skillful, 
meaning that  they make no profit on average (without loss of generality we can assume the market risk-free return rate is $0$).
The nonnegative random variables $X_{k,1},\dots,X_{k,n}$ are the monthly realized 
performance (i.e., the ratio of payoff to investment; $X_{k,j}>1$ presents a profit and $X_{k,j}<1$ means a loss)
 of agent $k$ from month $1$ to month $n$.
 The no-skill null hypothesis is $\E[X_{k,j} \mid \mathcal F_{j-1}] \le 1$ for $j=1,\dots,n$,
 where the $\sigma$-field
$\mathcal F_t$ represents the available market information up to time $t\in\{0,\dots,n\}$,
and we naturally assume that $(X_{k,t})_t$ is adapted to $(\mathcal F_t)_t$.
 
Since the agents are changing investment strategies over time 
and all strategies depend on the   financial market evolution,
there is complicated serial dependence within $(X_{k,1},\dots,X_{k,n})$
for single $k$, as well as cross dependence among agents $k=1,\dots,K$. 
Because of the complicated serial  dependence  and the lack of distributional assumptions of the performance data, it is difficult to obtain useful p-values for these agents.
Nevertheless, we can easily obtain useful e-values: for instance, $E_k=\prod_{j=1}^n X_{k,j}$ is a valid e-value, as well as any mixture of U-statistics of $X_{k,1},\dots,X_{k,n}$, including the mean and the product ($X_{k,1},\dots,X_{k,n}$ are called sequential e-values by \citep{VW20}, who also propose  methods to merge them into one e-value).
Moreover, the obtained e-values $E_1,\dots,E_K$ are dependent in a complicated way. Even if these e-values are not very large, they can be useful for other studies on these  traders. 
We return to a similar example in the simulations (Section~\ref{sec:simulation}). 

 As a side note, other sophisticated e-values can also be constructed for this problem using exponential self-normalized processes \citep{HRMS20}. For example, define $\psi(\lambda) := -\lambda - \log(1-\lambda)$. Then, $E_k = \prod_{j=1}^n \exp\left( \lambda (X_{k,j} - 1) - \psi(\lambda)(X_{k,j}-1)^2 \right)$ for for any $\lambda \in [0,1)$. Note that $\lim_{\lambda \to 0} \frac{\psi(\lambda)}{\lambda^2/2} = 1$, so this e-value captures subGaussian-like right tails of $\sum_{j=1}^n X_{k,j}-1$ relative to its empirical variance $\sum_{j=1}^n (X_{k,j}-1)^2$. To avoid picking a fixed $\lambda$, one can instead simply use a mixture over $\lambda$, for example using a Gamma distribution peaking near 0~\citep{HRMS21}. One can also pick $\lambda_j$ predictably at each step. These observations form the basis of all time-uniform Empirical-Bernstein inequalities \citep{HRMS21,WSR20a,WSR20b}.
\end{example}

To summarize, there are several reasons to work with e-values: they arise naturally in sequential settings, we know how to construct e-values in settings where we do not know how to construct p-values, and e-values can be more robust to misspecification or uncertain asymptotics in high-dimensional settings. Of course, there are also many reasons \emph{not} to work with e-values: in particular, p-values will often yield more ``powerful" (single/multiple) tests when the underlying modeling assumptions are true. Thus, e-values and p-values can also be viewed as picking different points on the validity-power curve: if one takes more risks with validity (p-values), one can gain more power, but if one questions one or more modeling assumptions, e-values may provide a safer choice.

\begin{remark}
Recall that we can always convert an e-value $e$ into a p-value $p=1/e$. One can also transform a p-value $p$ into an e-value $e=f(p)$ using ``calibrators''; recalling the definition by \citet{SSVV11}, a calibrator is a decreasing function $f:[0,1]\to[0,\infty)$ such that 
\begin{equation}\label{eq:calibrator}
    \int_0^1 f(u)\d u = 1.
\end{equation}
If the only known way to construct an e-value is by calibrating a bonafide p-value, then e-BH will typically not be more powerful than BH. Nevertheless, in some of our examples, we will assume that we can construct p-values, and calibrate them to e-values and study the resulting e-BH procedure, in order to understand the difference between the two approaches.
In contrast, if the only known way to construct a p-value is by inverting an e-value, then working with e-values directly is beneficial because the e-BH procedure is robust to arbitrary dependence without corrections, unlike the BH procedure.
\end{remark}

\section{Recap: the BH procedure}
\label{sec:2}

We briefly review the BH and BY
 procedures (\cite{BH95,BY01}) for FDR control, as the benchmark for our discussions.

To discuss the dependence structure among p-values and e-values,
we rely on  the notion of \emph{positive regression dependence on a subset (PRDS)} of \cite{BY01}, flipped when imposed on e-values. 
  A set $A\subseteq \R^K$ is said to be \emph{decreasing} (resp.~\emph{increasing})
if $\mathbf x\in A$ implies $\mathbf y\in A$ for all $\mathbf y\le \mathbf x$ (resp.~all $\mathbf y\ge \mathbf x$).   In this paper, all terms ``increasing" and ``decreasing" are in the non-strict sense,
and  inequalities should be
interpreted component-wise when applied to vectors.
  \begin{enumerate}
  \item  A random vector $\mathbf P$ of p-values is PRDS on $\mathcal N$ (or, simply, satisfies PRDS) if for any null index $k\in \mathcal N$ and   increasing set $A  \subseteq  \R^K$, the
function $x\mapsto \p(\mathbf P\in A\mid P_k\le x)$ is increasing on $ [0,1]$.  
  \item A random vector $\mathbf E$ of e-values satisfies PRDS if for any null index $k\in \mathcal N$ and   decreasing set $A  \subseteq  \R^K$, the
function $x\mapsto \p(\mathbf E\in A\mid E_k\ge x)$ is decreasing on $ [ 0,\infty)$. 
  \end{enumerate}
We chose to use the version of PRDS in \cite[Section 4]{finner2009false} (also see \cite{BR17}) which is weaker than the original one used in \cite{BY01}, but the difference is relatively minor:  ``$P_k \leq x$" in the former is replaced by ``$P_k=x$" in the latter.
   
If the null p-values (e-values) are mutually independent and independent of the non-null p-values (e-values), then PRDS holds; as such, PRDS is a generalization of independence. 
Moreover,  increasing individual transforms do not affect the PRDS property. Further, the PRDS property is preserved when moving from p-values to e-values using calibrators, or vice versa by inversion.  We record this fact below.
\begin{fact}
The PRDS property of p-values and that of e-values are equivalent in the following sense: 
If $\mathbf P$ is a vector of PRDS p-values, and $E_k = f_k(P_k)$ for any calibrator $f_k$ from \eqref{eq:calibrator}, then $\mathbf E$ is a vector of PRDS e-values.  
Similarly, if $\mathbf E$ is a vector of PRDS e-values, then $\mathbf P$ is a vector of PRDS p-values, where $P_k = 1/E_k$.
As an important example, PRDS holds if p-values/e-values are built on positively correlated (jointly) Gaussian test statistics $X_1,\dots,X_K$ (themselves PRDS).
For instance, this includes p-values obtained from Neyman-Pearson tests, i.e.,
$P_k=1-\Phi(X_k)$ where $\Phi$ is the standard Gaussian cdf, 
and e-values obtained from   mixture likelihood ratios,
 $
E_k = \int \exp({\delta X_k -\delta^2/2}) \d \nu_k (\delta),
 $
where $\nu_k$ is an arbitrary mixing distribution;
 see Example~\ref{ex:LR} for instance.
 The PRDS property of these p-values and e-values directly follows from the invariance of PRDS under monotone transforms. 
\end{fact}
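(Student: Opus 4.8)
The statement is entirely a consequence of the fact that PRDS is preserved by coordinatewise monotone transforms once one accounts for the orientation flip built into the two definitions, so the plan is to isolate that as a lemma and read both halves of the Fact (and the example) off it. A calibrator $f_k$ is, by definition of \eqref{eq:calibrator}, a \emph{decreasing} map $[0,1]\to[0,\infty)$, and the inversion $e\mapsto 1/e$ is decreasing on $[0,\infty]$; thus in both directions of the claimed equivalence the map relating $P_k$ to $E_k$ is coordinatewise decreasing. I would therefore prove: \emph{if $\mathbf P$ is PRDS in the p-value sense and $\phi(\mathbf p)=(g_1(p_1),\dots,g_K(p_K))$ with each $g_k$ a decreasing Borel function, then $\phi(\mathbf P)$ is PRDS in the e-value sense; and, symmetrically, if $\mathbf E$ is PRDS in the e-value sense then $(g_1(E_1),\dots,g_K(E_K))$ is PRDS in the p-value sense.} The first half of the Fact is this lemma with $g_k=f_k$, the second half is the mirror statement with $g_k(e)=1/e$.

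For the lemma, fix a null index $k$ and a decreasing Borel set $A\subseteq\R^K$, and set $B:=\phi^{-1}(A)$. Since $\phi$ reverses the coordinatewise order ($\mathbf q\ge\mathbf p$ implies $\phi(\mathbf q)\le\phi(\mathbf p)$) and $A$ is decreasing, $B$ is an \emph{increasing} set, and it is Borel because $\phi$ is Borel. Because $g_k$ is decreasing, $\{p\in[0,1]:g_k(p)\ge x\}$ is a down-set in $[0,1]$, hence an interval $[0,t_k(x)]$ or $[0,t_k(x))$ with $t_k(x):=\sup\{p:g_k(p)\ge x\}$, and $x\mapsto t_k(x)$ is decreasing. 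Consequently the conditioning event $\{E_k\ge x\}=\{g_k(P_k)\ge x\}$ agrees up to its endpoint with $\{P_k\le t_k(x)\}$, and
\[
\p(\mathbf E\in A\mid E_k\ge x)=\p(\mathbf P\in B\mid P_k\le t_k(x)).
\]
By the PRDS property of $\mathbf P$, the map $t\mapsto\p(\mathbf P\in B\mid P_k\le t)$ is increasing, and composing it with the decreasing map $x\mapsto t_k(x)$ shows $x\mapsto\p(\mathbf E\in A\mid E_k\ge x)$ is decreasing, which is exactly PRDS of $\mathbf E$ in the e-value sense. The converse is the mirror-image argument, with "increasing/decreasing set" and "$P_k\le\cdot$ / $E_k\ge\cdot$" interchanged and the decreasing composition $x\mapsto 1/x$ in place of $t_k$.

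The only genuinely delicate point — and thus the main obstacle — is the bookkeeping of \emph{strict versus non-strict} conditioning and degenerate cases: when $\{g_k(p)\ge x\}$ is half-open the conditioning event is $\{P_k<t_k(x)\}$ rather than $\{P_k\le t_k(x)\}$, which I would handle through the limiting identity $\p(\cdot\mid P_k<t)=\lim_{s\uparrow t}\p(\cdot\mid P_k\le s)$ wherever the left side is defined, together with the usual conventions for zero-probability conditioning events and the boundary values ($x=0$, $E_k=\infty$, $P_k=1$); the weaker "$P_k\le x$" form of PRDS adopted in this paper makes this comparatively painless. Finally, for the Gaussian example I would invoke the classical fact that a jointly Gaussian $\mathbf X$ with nonnegative pairwise correlations is PRDS — conditionally on $X_k=x$ the other coordinates are Gaussian with mean shifting upward linearly in $x$ and fixed covariance, hence stochastically increasing in $x$ — and then apply the lemma: $P_k=1-\Phi(X_k)$ is a decreasing transform of $X_k$, while for one-sided mixtures $E_k=\int\exp(\delta X_k-\delta^2/2)\,\dd\nu_k(\delta)$ (with $\nu_k$ supported on $[0,\infty)$) the map $x\mapsto E_k$ is increasing, so PRDS of both $\mathbf P$ and $\mathbf E$ follows from monotone-transform invariance.
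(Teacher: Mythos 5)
Your proposal is correct and follows essentially the same route as the paper, which states this Fact without a detailed proof and simply appeals to the invariance of PRDS under coordinatewise monotone transforms --- exactly the lemma you isolate and prove (including the preimage-of-a-monotone-set argument and the strict/non-strict endpoint bookkeeping). Your added caveat that the mixture likelihood ratio is a monotone function of $X_k$ only when $\nu_k$ is one-sided is a point the paper glosses over with ``arbitrary mixing distribution,'' and you are right to flag it.
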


Following the literature, we will study FDR in both the case of PRDS and that of arbitrary dependence.
The input of the Benjamini-Hochberg (BH) procedure includes three ingredients:
\begin{enumerate}[(a)]
\item  $K$ realized p-values $p_1,\dots,p_K$ associated to $H_1,\dots,H_K$, respectively;
\item an FDR level $\alpha \in (0,1)$;
\item (optional) dependence information or assumption on p-values, such as independence, PRDS or no information.
\end{enumerate}

For $k\in \mathcal K$, let $p_{(k)}$ be the $k$-th order statistics of $p_1,\dots,p_K$, from the smallest to the largest. 
The \emph{(base) BH procedure} $\cD(\alpha)$ rejects all hypotheses with the smallest $k^*$ p-values,
where 
\begin{equation}
\label{eq:p-k}
k^*=\max\left\{k\in \mathcal K: \frac{K p_{(k)}}{k} \le \alpha\right\},
\end{equation}
with the convention $\max(\varnothing)=0$, 
and accepts the rest.
We summarize some known results of \cite{BH95} and \cite{BY01} on the   procedure $\cD(\alpha)$ below.   Throughout, we write  
\[
\ell_K:=\sum_{k=1}^K \frac 1k \approx \log K.
 \] 
  \begin{theorem}\label{th:BH}
  For arbitrary p-values  and $\alpha\in (0,1)$, the base BH procedure $\cD(\alpha)$ satisfies
    $$
\E\left[\frac{F_{\cD(\alpha)}}{R_{\cD(\alpha)} }\right] \le \frac{ \ell_K K_0}{K}\alpha \le  \ell_K  \alpha.
$$
Moreover,   if   the p-values satisfy PRDS, then 
 $$
\E\left[\frac{F_{\cD(\alpha)}}{R_{\cD(\alpha)} }\right] \le \frac{K_0}{K}\alpha \le \alpha,
 $$
and the first inequality is an equality if the null p-values are iid uniform on $[0,1]$.
 \end{theorem}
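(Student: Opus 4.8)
The plan is to derive both bounds from the single structural feature that the BH procedure $\cD:=\cD(\alpha)$ is \emph{self-consistent}: whenever $\cD$ rejects $H_k$, the sorted definition \eqref{eq:p-k} forces $P_k\le R_{\cD}\,\alpha/K$, and of course $R_\cD\ge 1$ on that event. Writing $\mathrm{FDP}=F_\cD/R_\cD=\sum_{k\in\mathcal N}\id\{k\in\cD\}/R_\cD$ (with the convention $0/0=0$, so each summand vanishes when $R_\cD=0$) and taking expectations, it suffices to bound $\E[\id\{k\in\cD\}/R_\cD]$ for each fixed null index $k\in\mathcal N$ and then multiply by $K_0=|\mathcal N|$.

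For the arbitrary-dependence bound I would use only self-consistency together with super-uniformity $\p(P_k\le t)\le t$. On $\{k\in\cD\}$ put $r:=R_\cD\ge1$; since $P_k\le r\alpha/K$ we get $\id\{k\in\cD\}/R_\cD\le \id\{P_k\le r\alpha/K\}/r$. The crux is the telescoping identity $1/r=\sum_{j=r}^{K-1}\tfrac1{j(j+1)}+\tfrac1K$; because $\id\{P_k\le r\alpha/K\}\le\id\{P_k\le j\alpha/K\}$ for every $j\ge r$, this yields the $r$-free upper bound $\sum_{j=1}^{K-1}\tfrac{\id\{P_k\le j\alpha/K\}}{j(j+1)}+\tfrac{\id\{P_k\le\alpha\}}{K}$. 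Taking expectations and applying $\p(P_k\le j\alpha/K)\le j\alpha/K$ collapses the right-hand side to $\tfrac\alpha K\sum_{j=1}^{K}\tfrac1j=\tfrac{\alpha\ell_K}{K}$; summing over $\mathcal N$ gives the first claim, with no dependence assumption used anywhere.

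For the PRDS bound I would run the leave-one-out argument. Let $R^{(k)}$ be the number of rejections of the BH procedure applied to the p-value vector with $P_k$ replaced by $0$; because BH is a step-up procedure, $R^{(k)}$ does not depend on $P_k$, and one checks the deterministic identity $\{k\in\cD\}=\{P_k\le R^{(k)}\alpha/K\}$ together with $R_\cD=R^{(k)}$ on that event. Hence $\E[\id\{k\in\cD\}/R_\cD]=\sum_{j=1}^K\tfrac1j\,\p(R^{(k)}=j,\ P_k\le j\alpha/K)$, and the bound $\p(R^{(k)}=j,\,P_k\le j\alpha/K)\le\tfrac{j\alpha}{K}\,\p(R^{(k)}=j\mid P_k\le j\alpha/K)$ reduces everything to showing $\sum_{j=1}^K\p(R^{(k)}=j\mid P_k\le j\alpha/K)\le1$. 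Writing $\p(R^{(k)}=j\mid\cdot)=\p(R^{(k)}\ge j\mid\cdot)-\p(R^{(k)}\ge j+1\mid\cdot)$ and noting that $\{R^{(k)}\ge j+1\}$ is a decreasing set in the p-value vector (lowering any p-value weakly increases $R^{(k)}$, and $R^{(k)}$ is constant in the $k$-th coordinate), PRDS gives $\p(R^{(k)}\ge j+1\mid P_k\le j\alpha/K)\ge\p(R^{(k)}\ge j+1\mid P_k\le (j+1)\alpha/K)$, so the sum telescopes to $\p(R^{(k)}\ge1\mid\cdot)-\p(R^{(k)}\ge K+1\mid\cdot)\le1$. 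This gives $\E[\id\{k\in\cD\}/R_\cD]\le\alpha/K$, and summing over $\mathcal N$ gives $\mathrm{FDR}\le K_0\alpha/K\le\alpha$. For the last sentence, if the null p-values are i.i.d.\ $\mathrm U[0,1]$ and independent of the non-null ones, then $P_k$ is independent of $R^{(k)}$ and $\p(P_k\le R^{(k)}\alpha/K\mid R^{(k)})=R^{(k)}\alpha/K$ exactly (using $R^{(k)}\le K$), so every inequality in the leave-one-out chain becomes an equality and $\mathrm{FDR}=K_0\alpha/K$.

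The main obstacle is the deterministic step-up lemma invoked in the PRDS part: that decreasing $P_k$ to $0$ leaves $R_\cD$ unchanged on $\{k\in\cD\}$ and that $\{k\in\cD\}=\{P_k\le R^{(k)}\alpha/K\}$ with $R^{(k)}$ independent of $P_k$. Establishing this needs a careful case analysis of how the maximizing index $k^*$ in \eqref{eq:p-k} responds to lowering a single coordinate of the sorted p-value vector; once it is in hand, the remaining work is elementary summation plus the one-line PRDS monotonicity. In the arbitrary-dependence part the only genuine insight is spotting the telescoping $1/r=\sum_{j=r}^{K-1}\tfrac1{j(j+1)}+\tfrac1K$, after which the calculation is routine.
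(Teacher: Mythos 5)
Your proof is correct, but it takes a genuinely different route from the paper. The paper does not prove Theorem~1 directly: it derives it in Section~6.5 as a special case of the e-value machinery, by calibrating p-values to e-values via $E_k=T(\psi(P_k))/\alpha$ with $\psi(p)=\alpha/p$ and invoking Proposition~\ref{prop:psi}, whose bounds $y_\psi=\alpha\ell_K$ and $z_\psi=\alpha$ come from Theorems~\ref{th:e2} and~\ref{th:e1} and ultimately from Lemma~\ref{lem:e-FDR}. You instead give the classical direct argument. The two are closely related under the hood: your telescoping identity $1/r=\sum_{j=r}^{K-1}\tfrac{1}{j(j+1)}+\tfrac1K$ is exactly the combinatorial content that the paper packages into the truncation function $T$ and the formula $y_\psi=\psi^{-1}(1)+\sum_{j=1}^{K-1}\tfrac{K}{j(j+1)}\psi^{-1}(K/j)$; and your leave-one-out PRDS argument (with the decreasing set $\{R^{(k)}\ge j+1\}$ and the telescoping of conditional probabilities) is a self-contained proof of what the paper outsources to Lemma~1 of Ramdas et al.\ inside the proof of Lemma~\ref{lem:e-FDR}(ii). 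What your approach buys is elementarity and self-containment, including an explicit treatment of the equality case (where you correctly note that independence of $P_k$ from $R^{(k)}$, not merely iid uniformity of the nulls, is what is actually used); what the paper's approach buys is generality, since the same calculation covers every decreasing transform $\psi$, hence all reshaping/step-up procedures at once. The one step you state without full proof is the deterministic leave-one-out lemma ($\{k\in\cD\}=\{P_k\le R^{(k)}\alpha/K\}$ and $R_\cD=R^{(k)}$ on that event); this is a standard property of step-up procedures and you identify it precisely, but a complete write-up would need the short case analysis you allude to.
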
   
 
 By Theorem~\ref{th:BH}, if the p-values are PRDS, then the base BH procedure has an FDR of at most $\alpha$. 
If the p-values are arbitrarily dependent, then we need to replace $\alpha$ in the base BH procedure by $\alpha/\ell_K$, resulting in 
the Benjamini-Yekutieli (BY) procedure $\cD'(\alpha):=\cD(\alpha/\ell_K)$,
which  rejects all hypotheses with the smallest $ k^*$ p-values,
where \begin{equation}\label{eq:BY1}k^*=\max\left\{k\in \mathcal K: \frac{\ell_K K p_{(k)}}{k} \le \alpha\right\}.\end{equation}   
The extra factor $\ell_K$ in the BY procedure reflects the fact that it is harder to justify discoveries without the PRDS assumption, which includes independence as a special case. 
 It is well known that this factor cannot be   improved in general.

\section{The e-BH procedure}
\label{sec:4}

In this section, we design an analog of the BH procedure for e-values, which we call the \emph{e-BH procedure}.
Similar to the BH procedure, the input of the e-BH procedure 
includes three ingredients:
\begin{enumerate}[(a)]
\item  $K$ realized e-values $e_1,\dots,e_K$ associated to $H_1,\dots,H_K$, respectively;
\item an FDR level $\alpha \in (0,1)$;
\item (optional) distributional information or assumption on e-values.
\end{enumerate}
While (a) and (b) are the same as those of the BH procedure, (c) is somewhat different: in addition to dependence information, e-BH can also accommodate information of marginal distributions, since e-values have more freedom than p-values in terms of their distributions (the former are only constrained by their expectations under the null). 

The e-BH procedure can be described in two simple steps.
  \begin{enumerate}
 \item (Optional) Boost the \emph{raw e-values} in (a) using information in (c).  
 \item Apply the \emph{base e-BH procedure} to the boosted e-values and level $\alpha$.
 \end{enumerate}
We  will first describe the base e-BH procedure
in step 2
and then explain how to boost e-values in step 1.
If there is no available information in (c), we can simply skip step 1 above, and 
directly apply the base e-BH procedure to the raw e-values, which always has a valid FDR control at level $\alpha$.

\subsection{The base e-BH procedure}

Let $e'_1,\dots,e'_K$ be the boosted e-values obtained from step 1 of the procedure.
Keep in mind that they can be chosen as identical to the raw e-values $e_1,\dots,e_K$ if there is no information in (c) or one opts to skip the boosting in step 1.     

The base e-BH procedure is   applying the BH procedure to $1/e'_1,\dots,1/e'_K$.
To be precise, for $k\in \mathcal K$, let $e'_{[k]}$ be the $k$-th order statistic of $e'_1,\dots,e'_K$, sorted from {the largest to the smallest} so that $e'_{[1]}$ is the largest boosted e-value. 
The base e-BH procedure $\cG(\alpha):[0,\infty]^K\to 2^{\mathcal K}$ then rejects hypotheses with the largest $k_e^*$ (boosted) e-values,
where 
\begin{equation} 
\label{eq:e-k} 
k_e^*=\max\left\{k\in \mathcal K: \frac{k e'_{[k]}}{K} \ge \frac{1}{\alpha}\right\}.
\end{equation}   

We note that, analogous to the BH procedure, there are many equivalent ways of describing the e-BH procedure, including using an estimate of the false discovery proportion as pioneered by \cite{S02}, or the self-consistency viewpoint of \citet{BR08}. 
The alternative formulation in the next proposition  will be helpful when showing several results on FDR of the e-BH procedure. 
 \begin{proposition}
\label{prop:r1-1}
Let $e_1',\dots,e_k'$ be the raw or boosted e-values. Define
  \begin{align}
  \label{eq:r1-talpha}
  R(t):= |\{k\in \mathcal K: e_k'\ge t\}| \vee 1 \mbox{~~~and~~~}
 t_\alpha: = \inf \{t\in [0,\infty):t{R(t)} \ge K/\alpha\}.
\end{align}
For each $k$, the base e-BH procedure applied to $e_1',\dots,e_K'$ and level $\alpha$ rejects 
 $H_k$ if and only if $e'_k\ge t_\alpha$.  Moreover, $t_\alpha R(t_\alpha)= K/\alpha$.
 \end{proposition}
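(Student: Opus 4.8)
The plan is to translate the threshold description in \eqref{eq:r1-talpha} into the order-statistic description of the base e-BH procedure in \eqref{eq:e-k}, by locating $t_\alpha$ precisely relative to the sorted boosted e-values. Write $h(k):=ke'_{[k]}$, so that $k_e^*=\max\{k\in\mathcal K: h(k)\ge K/\alpha\}$ with $\max\varnothing=0$, and the base e-BH procedure rejects the hypotheses carrying the $k_e^*$ largest boosted e-values. A preliminary observation I would record is that when $1\le k_e^*<K$ the separation $e'_{[k_e^*]}>e'_{[k_e^*+1]}$ is strict: otherwise $h(k_e^*+1)=(k_e^*+1)e'_{[k_e^*]}>k_e^*e'_{[k_e^*]}=h(k_e^*)\ge K/\alpha$ would contradict the maximality of $k_e^*$. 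In particular ``the $k_e^*$ largest'' is unambiguous, and for $t\in(e'_{[k_e^*+1]},e'_{[k_e^*]}]$ one has $e'_k\ge t\iff e'_k>e'_{[k_e^*+1]}\iff H_k$ is among those $k_e^*$ largest. Using the conventions $e'_{[0]}:=+\infty$ and $e'_{[K+1]}:=0$, the entire statement then reduces to the two-sided bound
\[
e'_{[k_e^*+1]}<t_\alpha\le e'_{[k_e^*]}.
\]

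For the upper bound I would show that $t=e'_{[k_e^*]}$ is feasible for the infimum in \eqref{eq:r1-talpha}: when $k_e^*\ge1$, the strict separation gives $R(e'_{[k_e^*]})=|\{k: e'_k\ge e'_{[k_e^*]}\}|=k_e^*$, so $t\,R(t)=k_e^*e'_{[k_e^*]}=h(k_e^*)\ge K/\alpha$; when $k_e^*=0$ the bound $t_\alpha\le+\infty$ is vacuous. For the lower bound I would show that every $t$ with $0<t\le e'_{[k_e^*+1]}$ is infeasible: there at least $k_e^*+1$ of the e-values are $\ge t$, so $R(t)=|\{k: e'_k\ge t\}|=:r>k_e^*$, and since the $r$-th largest boosted e-value is $\ge t$ this gives $t\,R(t)\le r\,e'_{[r]}=h(r)<K/\alpha$, the last inequality being exactly the statement that $k_e^*$ is the \emph{largest} index on which $h$ reaches $K/\alpha$. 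Moreover every feasible $t$ obeys $t\ge(K/\alpha)/R(t)\ge(K/\alpha)/K=1/\alpha>0$, so $t_\alpha\ge1/\alpha>0$; this also settles the lower bound when $k_e^*=K$, where $e'_{[k_e^*+1]}=0$.

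For the ``moreover'', I would use that $t_\alpha$ lies in the interval $(e'_{[k_e^*+1]},e'_{[k_e^*]}]$ just pinned down, on which $R$ is the constant $r:=\max(k_e^*,1)$; there the feasibility condition $t\,R(t)\ge K/\alpha$ is simply $t\ge K/(\alpha r)$. The point $K/(\alpha r)$ lies in this interval---it is $\le e'_{[k_e^*]}$ by $h(k_e^*)\ge K/\alpha$ (trivially if $k_e^*=0$) and it is $>e'_{[k_e^*+1]}$ because $e'_{[k_e^*+1]}\ge K/(\alpha k_e^*)$ would again force $h(k_e^*+1)\ge K/\alpha$---so $t_\alpha=K/(\alpha r)$, which is attained, and therefore $t_\alpha R(t_\alpha)=r\cdot K/(\alpha r)=K/\alpha$.

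The substance of the argument is the handful of elementary inequalities on $h$; the only real obstacle is bookkeeping, namely handling the truncation $\vee1$ in $R(\cdot)$, the boundary cases $k_e^*\in\{0,K\}$, and the fact that $t\mapsto t\,R(t)$ is not globally monotone (it drops at every distinct boosted e-value), so that $t_\alpha$ need not equal $e'_{[k_e^*]}$. The argument sidesteps the non-monotonicity by invoking monotonicity of $t\,R(t)$ only on the single interval where the super-level set $\{k: e'_k\ge t\}$---and hence $R$---is constant. This proposition is the e-value analogue of the familiar equivalent descriptions of BH through the estimated-FDP viewpoint of \citet{S02} or the self-consistency viewpoint of \citet{BR08}; a reader comfortable with those could instead deduce it from them via the substitution $p_k=1/e'_k$.
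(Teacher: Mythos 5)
Your proof is correct and follows essentially the same elementary route as the paper's: both directly verify that the step-up index $k_e^*$ and the threshold $t_\alpha$ describe the same rejection set and that $t_\alpha R(t_\alpha)=K/\alpha$. The only cosmetic difference is that the paper organizes the argument around the function $Q(t)=tR(t)/K$ (using that $Q(0)=0$ and $Q$ has only downward jumps to conclude $Q(t_\alpha)=1/\alpha$), whereas you pin $t_\alpha=K/(\alpha\max(k_e^*,1))$ down explicitly between consecutive order statistics.
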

 
A self-contained simple proof of Proposition \ref{prop:r1-1} is put in  
 Appendix \ref{app:pf}.

\subsection{Boosting e-values} \label{sec:42}

 To boost e-values in step 1, there are two types of information we can use: marginal distributional information and joint dependence information. Below we first describe our standard choice of boosting, and then the more general choices which can be freely chosen by the user (Remark~\ref{rem:32}).
 Although we generally recommend the standard choice, the flexibility of choosing boosting functions allows us to include the p-based methods (like BH and BY) as special cases.
   
Let $K/\mathcal K :=\{K/k:k\in \mathcal K\}$, and  define a truncation function $T:[0,\infty]\to [0,K]$ by
letting $T(x)$ be the largest number in $K/\mathcal K\cup\{0\}$ that is no larger than $x$. In other words, 
\begin{equation} \label{eq:def-T}
T(x)  = \frac{K}{\lceil  K/x\rceil}\id_{\{x\ge 1\}}\mbox{~~with $T(\infty)=K$}.
\end{equation} 
Note that $T$ truncates $x$ to take only values in $K/\mathcal K$ (or zero).
The truncation function $T$ will be frequently used in this paper. 
The standard boosting method is now described below.   
For each $k\in \mathcal K$, 
take a \emph{boosting factor} $b_k\ge 1$ (the larger, the better)  such that, depending on dependence information,
 \begin{align}
 \label{eq:enhance-1}    \max_{x\in K/\mathcal K}  x  {\p(\alpha b_k E_k \ge x)} 
 \le \alpha   & \mbox{~~~~~~if e-values are PRDS;}\\ 
 \E[T(\alpha b_k E_k)]\le \alpha    & \mbox{~~~~~~otherwise,}
    \label{eq:enhance-2}  
 \end{align} 
  where the expectation and the probability are computed under the null distribution of $E_k$, which is sometimes known. 
     In case of a composite null,
 \eqref{eq:enhance-1}  and  \eqref{eq:enhance-2} need to hold for all probability measures in $H_k$, so an additional supremum over $H_k$ must be appended to the left hand side.  
By Markov's inequality, $b_k$ in \eqref{eq:enhance-1} is larger or equal to that in \eqref{eq:enhance-2}, leading to a stronger boosting under PRDS.
 Since $\E[E_k]\le 1$ for a null e-value $E_k$, \eqref{eq:enhance-1} and \eqref{eq:enhance-2} always hold  for $b_k=1$, making $b_k=1$ always a safe choice even if there is no additional (marginal or joint) distributional information. Moreover, the boosting factor $b_k$ is equal to $1$  if no marginal information on the distributions of the e-values is available. 
In either case, define the boosted e-values 
\[
e_k'=b_ke_k
\] 
for $k\in \mathcal K$. On first reading, the reader may skip the next two remarks.

\begin{remark}
Note that the left-hand sides of \eqref{eq:enhance-1} and \eqref{eq:enhance-2}
are increasing in $b_k$, and hence a practical value of $b_k$ can be obtained by simply trying a few choices of $b_k\ge 1$. It would be ideal to find the largest  $b_k$ such that \eqref{eq:enhance-1} or \eqref{eq:enhance-2} becomes an equality. This is possible in some cases  but it may not be possible if the null distribution of $E_k$ is not continuous or not completely specified; see Examples~\ref{ex:calibrator},~\ref{ex:LR},~\ref{ex:calibratorPRDS} and~\ref{ex:LR2}, where we either get a precise value  or an analytical approximation of the best $b_k$.
Under an extra assumption, we have a simple formula (Proposition~\ref{prop:bk}) for the best $b_k$ in case e-values are PRDS. 
\end{remark}

\begin{remark}\label{rem:32}
Instead of using the standard method in \eqref{eq:enhance-1} and \eqref{eq:enhance-2} described above, the user can instead choose increasing   functions $\phi_1,\dots,\phi_K:[0,\infty]\to [0,\infty]$ such that 
 \begin{align}
 \label{eq:enhance-1p}    \max_{x\in K/\mathcal K}  x  {\p(\alpha \phi_k (E_k) \ge x)} 
 \le \alpha   & \mbox{~~~~~~if e-values are PRDS;}\\ 
 \E[T(\alpha \phi_k(E_k))]\le \alpha    & \mbox{~~~~~~otherwise,}
    \label{eq:enhance-2p}  
 \end{align} 
and define the boosted e-values $e_k'=\phi_k(e_k)$ for $k\in \mathcal K$.
The choice $\phi_k:x\mapsto b_k x$ corresponds to the standard method  explained earlier. Note that if e-values are PRDS, then so are the boosted e-values by any choice of boosting, since PRDS is invariant under monotone transforms.  
\end{remark}

\section{The FDR guarantee of the e-BH procedure}

 We are now ready to state our main result.  
 
 \begin{theorem}\label{th:validity}
 The e-BH procedure at level $\alpha$ has FDR at most $K_0\alpha/K$. 
 In particular, the base e-BH procedure  applied to arbitrarily dependent raw e-values has FDR at most $K_0\alpha/K$.
 \end{theorem}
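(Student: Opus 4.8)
The plan is to reduce everything to the alternative description in Proposition~\ref{prop:r1-1}. Write $e'_k=\phi_k(e_k)$ for the boosted e-values (with $\phi_k(x)=b_kx$ for the standard choice, and $\phi_k=\mathrm{id}$ for the raw e-values) and let $\cG(\alpha)$ act on $e'_1,\dots,e'_K$. By Proposition~\ref{prop:r1-1}, $H_k$ is rejected iff $e'_k\ge t_\alpha$, and $t_\alpha R(t_\alpha)=K/\alpha$. Hence $R_{\cG(\alpha)}=R(t_\alpha)$ when there is at least one rejection, $F_{\cG(\alpha)}=\sum_{k\in\mathcal N}\id_{\{E'_k\ge t_\alpha\}}$, and, checking the no-rejection case where both sides vanish, the FDP (with $0/0=0$) equals $\sum_{k\in\mathcal N}\id_{\{E'_k\ge t_\alpha\}}/R(t_\alpha)$ on every sample point. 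So $\FDR_{\cG(\alpha)}=\sum_{k\in\mathcal N}\E\big[\id_{\{E'_k\ge t_\alpha\}}/R(t_\alpha)\big]$.

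The first real step is a deterministic inequality. Since $R(t_\alpha)\in\{1,\dots,K\}$, the balance equation $t_\alpha R(t_\alpha)=K/\alpha$ forces $\alpha t_\alpha=K/R(t_\alpha)\in K/\mathcal K$, so
\[
\frac{\id_{\{E'_k\ge t_\alpha\}}}{R(t_\alpha)}=\frac{\alpha t_\alpha}{K}\,\id_{\{\alpha E'_k\ge \alpha t_\alpha\}}\le \frac{T(\alpha E'_k)}{K},
\]
using that for every fixed $y\in K/\mathcal K$ one has $y\,\id_{\{z\ge y\}}\le T(z)$ (if $z\ge y$ then $y$ is an admissible value in the definition \eqref{eq:def-T} of $T$, so $T(z)\ge y$; otherwise the left side is $0$), and applying this with the random value $y=\alpha t_\alpha$ and $z=\alpha E'_k$ pointwise. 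Summing over $k\in\mathcal N$ and taking expectations yields
\[
\FDR_{\cG(\alpha)}\le \frac1K\sum_{k\in\mathcal N}\E\big[T(\alpha E'_k)\big].
\]

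This already settles the arbitrary-dependence regime. When the boosting functions satisfy \eqref{eq:enhance-2}/\eqref{eq:enhance-2p} (for a composite null, after the supremum over $H_k$), each summand is at most $\alpha$ by assumption, so $\FDR\le K_0\alpha/K$ with no condition whatsoever on the joint law of $\mathbf E$. The ``in particular'' clause is even more direct: for raw null e-values $T(\alpha E_k)\le\alpha E_k$ and $\E[E_k]\le1$, so the summand is again at most $\alpha$. No dependence structure is used here — this is the clean part.

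The main obstacle is the PRDS regime, where the stronger boosting \eqref{eq:enhance-1}/\eqref{eq:enhance-1p} is allowed. There the bound above is too weak: \eqref{eq:enhance-1} only controls $x\,\p(\alpha E'_k\ge x)$ at the points $x\in K/\mathcal K$, which bounds $\E[T(\alpha E'_k)]$ merely by $\alpha(\ell_{K+1}-1)\approx\alpha\log K$ — the same logarithmic loss that separates BH from BY in Theorem~\ref{th:BH}. So for this case I would run a Benjamini--Yekutieli-style argument instead: decompose $\id_{\{E'_k\ge t_\alpha\}}/R(t_\alpha)=\sum_{m=1}^K \frac1m\,\id_{\{E'_k\ge K/(\alpha m)\}}\id_{\{R(t_\alpha)=m\}}$, apply Abel summation in $m$, and use the PRDS property of $\mathbf E'$ (inherited from $\mathbf E$, since $\phi_k$ is monotone) to show that the conditional probabilities $\p(R(t_\alpha)\ge m\mid E'_k\ge K/(\alpha m))$ are monotone in the right direction — $\{R(t_\alpha)\ge m\}$ is an increasing event. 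Combined with the per-null superuniformity $\p(\alpha E'_k\ge K/m)\le \alpha m/K$ coming from \eqref{eq:enhance-1}, the telescoped sum should collapse to $\E\big[\id_{\{E'_k\ge t_\alpha\}}/R(t_\alpha)\big]\le\alpha/K$ for each $k\in\mathcal N$, and summing gives $K_0\alpha/K$. Getting the telescoping to close cleanly — in particular dealing with the step-up subtlety that $\{R(t_\alpha)\ge m\}$ is not simply $\{E'_{[m]}\ge K/(\alpha m)\}$ because of the maximum in \eqref{eq:e-k} — is the delicate point, and it is precisely where the PRDS hypothesis is consumed; it parallels the PRDS half of the proof of Theorem~\ref{th:BH}.
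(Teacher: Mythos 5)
Your treatment of the arbitrary-dependence regime is correct and is in substance the paper's own argument: the pointwise inequality $\alpha t_\alpha \id_{\{\alpha E'_k\ge \alpha t_\alpha\}}\le T(\alpha E'_k)$, valid because $\alpha t_\alpha=K/R(t_\alpha)\in K/\mathcal K$, is exactly Lemma~\ref{lem:e-FDR}(iii) specialized to $f(\mathbf X)=t_\alpha$ with $I_f=\alpha^{-1}K/\mathcal K$ and $X_k'=T(\alpha X_k)/\alpha$, and your conclusion reproduces Theorem~\ref{th:e2}. This already covers the second sentence of the theorem (raw e-values, arbitrary dependence) as well as the full e-BH procedure whenever the boosting is done via \eqref{eq:enhance-2} or \eqref{eq:enhance-2p}. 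Your diagnosis that \eqref{eq:enhance-1} alone only yields $\E[T(\alpha E'_k)]\lesssim \alpha\log K$ is also accurate, so you are right that the PRDS-boosted case genuinely needs a separate argument.

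The gap is that this separate argument is only announced, not carried out. The statement of the theorem covers the e-BH procedure with the stronger PRDS boosting \eqref{eq:enhance-1}/\eqref{eq:enhance-1p}, so the ``telescoped sum should collapse'' step is load-bearing, and the step-up subtlety you flag (that $\{R(t_\alpha)\ge m\}$ is not the same as $\{E'_{[m]}\ge K/(\alpha m)\}$, and that conditioning on $\{E'_k\ge K/(\alpha m)\}$ changes with $m$) is precisely where such direct telescoping arguments break if done naively. The paper does not redo this computation; instead, Lemma~\ref{lem:e-FDR}(ii) converts the null e-value into a p-value $P_k=g(X_k')$ with $g(x)=\p(X_k'\ge x)$, observes that $t_\alpha=f(\mathbf E')$ is a decreasing function of the e-values so that $g\circ f$ is increasing and PRDS transfers, and then invokes the already-proved super-uniformity lemma $\E\bigl[\id_{\{P_k\le g\circ f(\mathbf X)\}}/(g\circ f(\mathbf X))\bigr]\le 1$ from \cite{RBWJ19}; combined with $x\,\p(X_k'\ge x)\le z_{\alpha,k}/\alpha$ from \eqref{eq:enhance-1} this gives $\E[t_\alpha\id_{\{E_k'\ge t_\alpha\}}]\le z_{\alpha,k}/\alpha$ and hence Theorem~\ref{th:e1}. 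To close your proof, either carry out the leave-one-out/telescoping argument in full (essentially re-proving that lemma), or perform this reduction and cite it — as written, the PRDS half of the theorem remains unproved.
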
 
The full proof of Theorem~\ref{th:validity} follows from Theorems~\ref{th:e2} and~\ref{th:e1}  in the next few sections, which requires   delicate technical treatment. Here, we provide a simple proof for the second statement of Theorem~\ref{th:validity} 
that the base e-BH procedure has the desired FDR guarantee. This simple argument illustrates the advantages of working with e-values, and it applies to  any ``complaint'' e-testing procedures including the base e-BH procedure.  
Following~\cite{BR08}, who defined self-consistent p-testing procedures, an e-testing procedure $\cG$  is said to be \emph{self-consistent at level $\alpha\in(0,1)$} if, denoting by $R_{\cG}$ the number of rejections, every rejected e-value $e_k$ satisfies
$$
e_k \ge \frac{K}{\alpha R_{\cG}  }.
$$ 
Using $1/p_k$ in place of $e_k$ above, we recover  the definition from~\cite{BR08} with a linear shape function. 
Clearly, the base e-BH procedure is self-consistent because of \eqref{eq:e-k}; moreover, the base e-BH procedure dominates all other self-consistent e-testing procedures by definition.

\begin{proposition}\label{th:compliant}
Any self-consistent e-testing procedure  at level $\alpha $ has FDR at most $\alpha K_0/K$ for arbitrary configurations of e-values.
\end{proposition}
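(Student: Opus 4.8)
The plan is to establish a pathwise (deterministic) upper bound on the FDP by a linear function of the e-values, and then integrate. First I would write the FDP of $\cG$ as a sum over the null indices,
\[
\frac{F_{\cG}}{R_{\cG}} = \sum_{k\in\mathcal N}\frac{\id_{\{k\in\cG\}}}{R_{\cG}},
\]
where, consistent with the convention $0/0=0$, the right-hand side is interpreted as $0$ on the event $R_{\cG}=0$ (on which every summand is $0$ as well, since no hypothesis is rejected).

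The key step is the claim that, for each $k$, the inequality $\id_{\{k\in\cG\}}/R_{\cG}\le \alpha e_k/K$ holds deterministically. If $k\notin\cG$, the left-hand side is $0$ while the right-hand side is nonnegative because $e_k\ge 0$. If $k\in\cG$, then $R_{\cG}\ge 1$, and the definition of self-consistency at level $\alpha$ gives precisely $e_k\ge K/(\alpha R_{\cG})$, i.e.\ $1/R_{\cG}\le \alpha e_k/K$. Summing this bound over $k\in\mathcal N$ yields the pathwise estimate
\[
\frac{F_{\cG}}{R_{\cG}} \le \frac{\alpha}{K}\sum_{k\in\mathcal N} e_k .
\]
Finally I would take expectations under $\p$: by linearity and the fact that $E_k$ is an e-variable for each null index $k\in\mathcal N$ (so $\E[E_k]\le 1$),
\[
\mathrm{FDR}_{\cG} = \E\!\left[\frac{F_{\cG}}{R_{\cG}}\right] \le \frac{\alpha}{K}\sum_{k\in\mathcal N}\E[E_k] \le \frac{\alpha K_0}{K}.
\]
Crucially, no dependence assumption among the $E_k$ (null or non-null) is used anywhere, which is exactly the phenomenon the proposition is meant to capture; the argument parallels the treatment of self-consistent p-testing procedures in \cite{BR08}, with $e_k$ replacing $1/p_k$.

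I do not expect a genuine obstacle here: the only points requiring care are the bookkeeping on the event $R_{\cG}=0$ together with the convention $0/0=0$, and the observation that in the boosted version one should read $e_k$ as the boosted value $e_k'=\phi_k(e_k)$ fed into the base procedure — but since the proposition is stated for an arbitrary self-consistent procedure on whatever e-values it receives, this is automatic. The proof also isolates \emph{self-consistency} (the inequality $e_k\ge K/(\alpha R_{\cG})$ for every rejected $k$) as the sole property needed, so it applies verbatim to the base e-BH procedure and to any procedure it dominates.
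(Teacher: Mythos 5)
Your proof is correct and follows essentially the same route as the paper's: decompose the FDP as a sum over null indices, use self-consistency to bound each term $\id_{\{k\in\cG\}}/R_{\cG}$ by $\alpha E_k/K$ pathwise, and conclude by linearity of expectation together with $\E[E_k]\le 1$ for $k\in\mathcal N$. The handling of the $R_{\cG}=0$ event and the observation that no dependence assumption is used match the paper's argument exactly.
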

\begin{proof} 
Let $\cG$ be a self-consistent e-testing procedure.  
Let $\mathbf E=(E_1,\dots,E_K)$ be an arbitrary vector of e-variables fed to the testing procedure $\cG$. 
The FDP of $\cG$ satisfies 
\begin{align} 
 \frac{F_{\cG}}{R_{\cG} }   =
 \frac{ |\cG (\mathbf E)\cap \mathcal N |  }{R_{\cG} \vee 1}  
 & =\sum_{k\in \mathcal N}  \frac{ \id_{\{k\in \cG(\mathbf E)\}}  }{R_{\cG} \vee 1}  
  \le \sum_{k\in \mathcal N}  \frac{ \id_{\{k\in \cG(\mathbf E)\}} \alpha E_k   }{K}   \le \sum_{k\in \mathcal N}   \frac{\alpha   E_k  }{K} , 
\label{eq:compliant}
\end{align} 
where the first inequality  is due to self-consistency. 
As $\E[E_k]\le 1$ for $k\in \mathcal N$, we have
$$
\E\left[ \frac{F_{\cG}}{R_{\cG} }  \right]  \le   \sum_{k\in \mathcal N}  \E\left[ \frac{\alpha   E_k  }{K}   \right] \le \frac{\alpha K_0}{K},
$$
thus the desired FDR guarantee. 
\end{proof}
The above proof can be alternately viewed as verifying the ``dependence condition'' in \cite{BR08}, which holds even for arbitrarily dependent e-values unlike for p-values.\footnote{We thank a reviewer for this insightful observation.}

In contrast, general self-consistent p-testing procedures do not have  the nice property  in Proposition~\ref{th:compliant} even if the p-values are independent. Indeed, under a condition slightly weaker than PRDS, \cite{Su18} proves that a self-consistent p-testing procedure\footnote{\cite{Su18} used the term \emph{compliance} for self-consistency. We thank a reviewer for pointing out the terminology of self-consistency. 
\cite{BR08} showed that a self-consistent p-testing procedure that is decreasing in each p-value controls FDR at level $\alpha$ for independent or PRDS p-values. 
} has a weaker FDR guarantee 
$
\alpha (1+\log(1/\alpha) )> \alpha.
$
  Hence, for an FDR guarantee of $\alpha$, the p-testing procedure needs to be run at level $\alpha'<\alpha$ satisfying
\begin{align}\label{eq:alphaprime}
\alpha' \left(1+\log \frac{1}{\alpha'}\right) = \alpha.
\end{align}
 
The FDR control inflation to $\alpha (1+\log(1/\alpha))$ for arbitrary self-consistent p-testing procedures is quite a contrast to the control at level $\alpha$ for arbitrary self-consistent e-testing procedures, and this fact plays an important role in bandit multiple testing~\citep{JJ18,xu2021unified}. 
 
\subsection{Self-consistent structured, post-selection and grouped e-BH}

Without further structure, self-consistent p- or e-testing procedures are dominated by the base BH or e-BH procedure, and hence we do not directly apply them for a stand-alone data set of p-values or e-values. Nevertheless, they become useful in multi-armed bandit problems; see \cite{JJ18} and our Section~\ref{sec:simulation}. Self-consistent procedures are also useful in structured settings. Suppose we would like to insist that the rejected set cannot be an arbitrary subset of $\mathcal K$, but must be one amongst a given class of sets $\mathcal S \subseteq 2^{\mathcal K}$. For example, if the hypotheses are structured as a graph, then we may choose $\mathcal S$ to be all connected subgraphs. The appropriate modification of the e-BH procedure (``structured e-BH procedure'') would try to 
 \begin{quote}
     find any set $S \in \mathcal S$ such that every e-value in $S$ is larger than $\tfrac{K}{\alpha |S|}$.
\end{quote}    
    Importantly, solving for the \emph{largest} $S$ may not be computationally feasible, so one may wish to use an approximation algorithm (like a greedy heuristic that grows $S$ by starting with the largest e-value and adding a few elements at a time to this set) to find \emph{some large} $S \in \mathcal S$ such that every e-value in $S$ is larger than $\tfrac{K}{\alpha |S|}$. Such a procedure would immediately satisfy an FDR guarantee, for any $\mathcal S$ and dependence structure, by virtue of being self-consistent. In contrast, the ``structured BH procedure'' in \cite[Section 8]{RBWJ19} requires the BY reshaping correction in order to maintain validity.

Another application of self-consistency is in post-processing e-values that have been screened or filtered based on the data. Formally, suppose the original set of hypotheses $\mathcal K$ has been shrunk to a data-dependent subset $S \subset \mathcal K$ in some arbitrary way based on the e-values. This could be based on any subjective choices of the user (based on side-information, prior scientific knowledge, whatever). The question is: how do we now find a subset of $S$ that guarantees FDR control? The ``post-selection e-BH procedure'' is simple: 
\begin{quote}
    run the e-BH procedure on $S$ at the amended level $\widetilde \alpha:= \alpha |S|/K$. 
\end{quote}
Then every rejected e-value will be larger than $\tfrac{|S|}{\widetilde \alpha R_{\cG}}$, which equals $\tfrac{K}{\alpha R_{\cG}}$. Thus, this procedure controls FDR under arbitrary dependence for any initial selection procedure. This is once more in contrast to the ``post-selection BH procedure'' in \cite[Section 8]{RBWJ19}, where conditions on both the dependence and the selection are needed for FDR control, devoid of which one must use the BY reshaping correction.

Combining the above two ideas (imposition of structural constraints and screening/filtering) in any order immediately lends a great deal of flexibility to the user to design new procedures. For example, it immediately yields a ``focused e-BH'' procedure, an extension of the recent focused BH algorithm by~\cite{KSB18}, whose details are omitted for brevity. In contrast to \citep[Theorem 1]{KSB18}, our procedure controls FDR under no assumptions whatsoever on the dependence, structure and filtering. 

As a final extension, imagine that we are given a partition of the hypotheses into $G$ prespecified groups. It is easy to define group-level e-values by merging the individual level e-values (e.g., averaging under arbitrary dependence, or multiplying under independence). This immediately yields a group-level e-BH procedure that can be used as an initial filtering step before running the post-selection e-BH procedure. For consistently aggregating discoveries across multiple partitions, an e-filter extension of the p-filter algorithm~\citep{RBWJ19} may be derived.

\subsection{A few remarks on the e-BH procedure}\label{sec:52}
At this point, we have seen that the base e-BH procedure has the desirable FDR control. Let us make a few remarks on the (full) e-BH procedure to further explain its features and subtleties. Some features have been mentioned briefly before and here  they are discussed in comparison with alternatives. 
\begin{enumerate}
\item Although one can always opt to skip the boosting step and directly apply the base e-BH method to raw e-values, 
boosting can be quite helpful to enhance detection power. The boosting factor $b_k$ may be  substantial (see Examples~\ref{ex:calibrator},~\ref{ex:LR},~\ref{ex:calibratorPRDS} and~\ref{ex:LR2}); a boosting factor between 1.3 and 10 is common in stylized settings.  

\item  
If the e-values are independent (or   PRDS), then by Theorem~\ref{th:BH}, the FDR of the base e-BH procedure is at most $\alpha$.
This result is not of primary interest, but is worthy of note.
The main advantage of e-BH is that, unlike the BH procedure, the above FDR guarantee holds under arbitrary dependence, where a conversion to p-values would be suboptimal due to the extra $\approx \log K$ correction factor. This point is relevant in case p-values are derived from e-values, for example via universal inference \citep{WRB20}.

\item The boosting in step 1 of the BH procedure can easily incorporate partial information on the null  distributions of the e-values. 
If we  know a set of null possible distributions of $E_k$, then it suffices to calculate a boosting factor under each distribution, and take their infimum. Moreover, if only the null distributions of some e-values are known, we can simply choose $b_k=1$ for the ones whose distributions we do not know. 
Note that the post-selection and structured e-BH procedures also enjoy FDR control after boosting under arbitrary dependence.

\item Since $b_k$ may be different for each $k$, the boosted e-values may not have the same order as the raw e-values. Hence, the full e-BH procedure may reject a hypothesis with a smaller e-value while accepting one with a larger e-value.
 This feature is intentional. For instance, an observation of $e_k=1.999$ 
 carries stronger   evidence against the null hypothesis of a uniform distribution on $[0,2]$, compared to an observation of $e_k=2$ against a null hypothesis of an exponential distribution with mean $1$ or an unknown distribution.

\item E-values typically contain less information than p-values (the flip side of needing less assumptions/structure/knowledge/modeling in order to be constructed), and hence a simplistic comparison with the BH procedure applied to p-values is not particularly insightful.
Nevertheless, if the  distributions of null e-values are fully known, then the e-BH procedure performs comparably to the BH and the BY procedures; see Section~\ref{sec:gen-p} for their connection and Section~\ref{sec:simulation} and Appendix~\ref{app:num} for simulation results. 

\item  We note a contrasting feature on the FDR of the BH and e-BH procedures.
The BH procedure has a desirable FDR control under dependence assumptions (such as independence and PRDS) 
and a penalty if such assumption is not available (e.g., \citep{BY01,FL20}). 
In contrast, the e-BH procedure has a desirable FDR control with no dependence assumption and a boosting of power is possible if dependence assumptions are imposed. 
Similarly, a penalty in \eqref{eq:alphaprime} needs to be applied for self-consistent p-testing procedures, which are dominated by BH, but not for self-consistent e-testing procedures. 

\item Suppose we may avail ourselves of only approximate e-values that (under the null) satisfy $\mathbb{E}[E_i] \leq 1 + \epsilon_i$, or ``asymptotic" e-values, which satisfy $\mathbb{E}[E_i] \leq 1 + o(1)$, in the sense that as the amount of data used to calculate the e-value increases in size, then the approximate e-value becomes a valid e-value in the limit. Then the corresponding FDR control in the first case is simply bounded by $\tfrac{\alpha}{K}(K_0+ \sum_{i \in \mathcal{N}} \epsilon_i)$, which is further bounded by the simpler expression $\alpha(1+\max_i \epsilon_i)$. In the second case, the FDR bound becomes $\alpha (1 + o(1)) K_0/K$. In other words, errors in the e-values (which may be due to slightly violated assumptions, or nuisance parameters, etc.) directly and linearly propagate to errors in the FDR control level. This claim can be verified by simply observing the linearity of expectation used in the last line of the proof of Proposition~\ref{th:compliant}. We know of no existing simple, analogous claim for the BH procedure or other p-testing procedures.

 \end{enumerate}

In practice, it may be desirable to assign weights based on prior knowledge of each hypothesis. Here, we describe an analog of the weighted BH procedure~\citep{BH97}. 
For arbitrary constant weights $w_1,\dots,w_K$ summing up to $K$,
a similar boosting scheme is obtained by each
 replacing $\alpha$ with $w_k\alpha$ in \eqref{eq:enhance-1} and \eqref{eq:enhance-2}.
We can alternatively skip the boosting in step 1 and directly use $w_1E_1,\dots,w_KE_K$ as the input e-values fed to the base e-BH procedure. 
In either case,  the full e-BH procedure still has the valid FDR control at level $\alpha$, under arbitrary dependence of the e-values as before.  
This claim is also justified by Theorems~\ref{th:e2} and~\ref{th:e1} (see the explanation after  Theorem~\ref{th:e2}).

 At this point, most (but not all) messages for the practitioner have been delivered and the casual reader may be warned that the paper intentionally takes a rather theoretical turn, and analyzes the e-BH procedure more carefully, comparing repeatedly to the BH procedure or to the case when e-values are simply calibrated p-values. Proofs of all results beyond this section are put in Appendix \ref{app:pf}.

 \section{Theoretical analysis}

\subsection{A technical  lemma} 
While we have provided a direct  and simple proof for FDR control of the base e-BH that does not require any correction under arbitrary dependence, the reader may be curious to understand better why exactly BH has to pay an extra $\approx \log K$ factor, but e-BH, on the contrary, allows for boosting. We provide one technical answer here in the form of a  lemma, similar to \cite[Lemma 1]{RBWJ19} in the setting of p-values.  
Lemma~\ref{lem:e-FDR} leads to FDR control of the full e-BH procedure applied to boosted e-values.

Recall that for $k\in \mathcal K$, a random vector $\mathbf X=(X_1,\dots,X_K)$   is PRDS on $X_k$ if for any   decreasing set $A  \subseteq  \R^K$, the
function $x\mapsto \p(\mathbf X
\in A\mid X_k\ge x)$ is decreasing on $ [ 0,\infty)$.   

In order to prepare for the lemma that follows, note that for any positive constant $c$, we have that  
\begin{equation}\label{eq:setup-e}
\mathbb E[c \id_{\{X_k \geq c\}}] \leq \mathbb E[X_k],
\end{equation}
which is simply a restatement of Markov's inequality. The following lemma addresses the situation where $c$ is a random data-dependent quantity $f(\mathbf X)$,  as in the computation of FDR of the e-BH procedure.

 \begin{lemma} \label{lem:e-FDR}
Take an arbitrary random vector $\mathbf X=(X_1,\dots,X_K) :\Omega\to [0,\infty]^K$ and fix $k\in \mathcal K$. Let $f:[0,\infty]^K\to  [0,\infty)$ be a measurable  function with range $I_f$,
and $X_k':= \sup \{x\in I_f \cup\{0\}:  x\le X_k\}$ so that $X_k' \le X_k$ by construction.
\begin{enumerate}[(i)]
\item  If $X_k$ is independent of $\mathbf X^{-k}:=(X_j)_{j\ne k}$, then 
$$
\E\left[f(\mathbf X) \id_{\{X_k\ge f(\mathbf X)\}}  \mid \mathbf X^{-k} \right] \le \E[X_k' ] \le \E[X_k].
$$
\item   If  $f$ is decreasing and $\mathbf X$ satisfies  PRDS on $X_k$, then 
$$
\E\left[f(\mathbf X) \id_{\{X_k\ge f(\mathbf X)\}} \right] \le \sup_{x \ge 0} x \p(X'_k\ge x) \le \E[X_k].
$$
\item For any dependence structure, it always holds that
$$
\E\left[f(\mathbf X) \id_{\{X_k\ge f(\mathbf X)\}}   \right] \le \E[ X_k'] \le \E[X_k].
$$
\end{enumerate} 
In particular, if $X_k$ is a null e-value,   all expectations above are bounded by $1$.
 \end{lemma}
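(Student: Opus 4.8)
The plan is to handle the three cases by a common device: replace $X_k$ by its ``discretized'' version $X_k'$ taking values in $I_f\cup\{0\}$, observe that on the event $\{X_k\ge f(\mathbf X)\}$ we may as well integrate against $X_k'$ rather than $X_k$, and then apply the appropriate (conditional, PRDS, or plain) version of Markov's inequality. The first inequality $X_k'\le X_k$ holds by construction, so the second inequality in each display ($\E[X_k']\le\E[X_k]$, or $\sup_x x\,\p(X_k'\ge x)\le\E[X_k]$) is immediate: for the latter, note $x\,\p(X_k'\ge x)\le x\,\p(X_k\ge x)\le\E[X_k]$ by Markov. So the work is entirely in the first inequality of each part.

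For part (iii), the key observation is that whenever $X_k\ge f(\mathbf X)$, since $f(\mathbf X)\in I_f$ and $X_k'$ is the largest element of $I_f\cup\{0\}$ below $X_k$, we have $X_k'\ge f(\mathbf X)$; conversely $X_k\ge X_k'\ge f(\mathbf X)$ gives the reverse, so the events $\{X_k\ge f(\mathbf X)\}$ and $\{X_k'\ge f(\mathbf X)\}$ coincide, and on this event $f(\mathbf X)\le X_k'$. Hence $f(\mathbf X)\id_{\{X_k\ge f(\mathbf X)\}}=f(\mathbf X)\id_{\{X_k'\ge f(\mathbf X)\}}\le X_k'$ pointwise, and taking expectations gives the claim with no dependence assumption. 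For part (i), condition on $\mathbf X^{-k}$; then $c:=f(\mathbf X)$ would still depend on $X_k$, so this naive argument does not immediately close. Instead I would first pass to $X_k'$ as above to get $\E[f(\mathbf X)\id_{\{X_k\ge f(\mathbf X)\}}\mid\mathbf X^{-k}]\le\E[X_k'\id_{\{X_k'\ge f(\mathbf X)\}}\mid\mathbf X^{-k}]$, and then, since $X_k'$ takes only countably many values $\{v_j\}=I_f\cup\{0\}$, expand $X_k'\id_{\{X_k'\ge f(\mathbf X)\}}=\sum_j v_j\id_{\{X_k'=v_j\}}\id_{\{v_j\ge f(\mathbf X)\}}$ — wait, this still couples $f(\mathbf X)$ and $X_k$. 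The cleaner route: use that $f(\mathbf X)\id_{\{X_k'\ge f(\mathbf X)\}}\le X_k'$ holds \emph{pointwise} (from part (iii)'s argument), so $\E[f(\mathbf X)\id_{\{X_k\ge f(\mathbf X)\}}\mid\mathbf X^{-k}]\le\E[X_k'\mid\mathbf X^{-k}]=\E[X_k']$ by independence of $X_k$ (hence $X_k'$) from $\mathbf X^{-k}$. So independence is only used to drop the conditioning in $\E[X_k'\mid\mathbf X^{-k}]$.

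Part (ii) is the one requiring the genuine PRDS machinery and is the main obstacle. Here $f$ is decreasing, so the event $\{X_k'\ge f(\mathbf X)\}$ is not simply of the form $\{X_k'\ge \text{const}\}$: as $X_k$ increases, $f(\mathbf X)$ decreases, which is favorable, but $f$ also depends on $\mathbf X^{-k}$. The plan is to write, using the pointwise bound $f(\mathbf X)\id_{\{X_k\ge f(\mathbf X)\}}\le X_k'\id_{\{X_k'\ge f(\mathbf X)\}}$ and then decomposing by the level of $X_k'$,
\begin{align*}
\E\left[f(\mathbf X)\id_{\{X_k\ge f(\mathbf X)\}}\right]
&\le \sum_{v\in I_f\cup\{0\}} v\,\p\!\left(X_k'=v,\ f(\mathbf X)\le v\right),
\end{align*}
and to argue, via an Abel summation / telescoping over the ordered values of $I_f$, that this sum is bounded by $\sup_{x\ge0}x\,\p(X_k'\ge x)$. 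The crux is that the set $\{f(\mathbf X)\le v\}=\{\mathbf X\in A_v\}$ for a decreasing set $A_v$ (since $f$ is decreasing, its sublevel sets are decreasing, viewing $x\mapsto f(\mathbf x)$'s monotonicity componentwise), so PRDS on $X_k$ gives that $x\mapsto\p(\mathbf X\in A_v\mid X_k'\ge x)$ — or rather the version with $X_k\ge x$, which one then transfers to $X_k'$ since $X_k'$ is a monotone function of $X_k$ — is decreasing. Combining this monotonicity across the levels $v$ with the telescoping identity $\sum_j v_j\,\p(X_k'=v_j,\ \cdot)$ and bounding each conditional probability by its value at the relevant threshold, one collapses the sum to a single term of the form $x^*\,\p(X_k'\ge x^*)\le\sup_x x\,\p(X_k'\ge x)$. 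I expect the bookkeeping in this telescoping-under-PRDS step — correctly aligning the decreasing set $A_v$ with the threshold, and handling the discreteness of $I_f$ — to be the delicate part; it mirrors the p-value computation in \cite[Lemma 1]{RBWJ19} but run ``in reverse'' because large e-values, rather than small p-values, are the rejections. Finally, the concluding sentence is immediate: if $X_k$ is a null e-value then $\E[X_k]\le1$, so every right-hand side above is at most $1$.
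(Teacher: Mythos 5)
Your treatment of parts (i) and (iii) is correct and is essentially the paper's own argument: the pointwise observation that $\{X_k\ge f(\mathbf X)\}=\{X_k'\ge f(\mathbf X)\}$ and that $f(\mathbf X)\le X_k'$ on this event, followed (for (i)) by bounding the indicator by $1$ and using that $X_k'$, being a measurable increasing function of $X_k$, inherits independence from $\mathbf X^{-k}$. The final sentence about null e-values is also handled correctly.

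Part (ii), however, is the only genuinely nontrivial claim in the lemma, and your proposal does not prove it: you outline an Abel-summation plan and then explicitly defer ``the delicate part,'' which is exactly where the PRDS hypothesis must be converted into the bound $\sup_{x\ge0}x\,\p(X_k'\ge x)$ rather than the weaker $\E[X_k']$ of part (iii). Two concrete problems with the sketch. First, the orientation of the monotone set is wrong: since $f$ is decreasing, the sublevel set $\{\mathbf x: f(\mathbf x)\le v\}$ is an \emph{increasing} set, not a decreasing one; with the paper's definition of PRDS for e-values (stated for decreasing sets $A$, with $x\mapsto\p(\mathbf X\in A\mid X_k\ge x)$ decreasing), you must pass to complements or restate the property for increasing sets before the telescoping inequalities point the right way, and this sign error propagates through every comparison in the summation. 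Second, the decomposition $\sum_{v\in I_f\cup\{0\}}v\,\p(X_k'=v,\ f(\mathbf X)\le v)$ presupposes that $I_f$ is countable, which the lemma does not assume. The paper closes part (ii) by a different device: it sets $g(x):=\p(X_k'\ge x)$ and $P_k:=g(X_k')$, checks that $P_k$ is a null p-variable and that $g\circ f$ is increasing so that the PRDS property transfers to the p-value orientation, invokes the super-uniformity lemma $\E\bigl[\id_{\{P_k\le g\circ f(\mathbf X)\}}/(g\circ f(\mathbf X))\bigr]\le1$ of \cite[Lemma 1]{RBWJ19}, and then factors $f(\mathbf X)=\bigl(f(\mathbf X)\,\p(X_k'\ge f(\mathbf X))\bigr)\cdot(g\circ f(\mathbf X))^{-1}$ to extract the constant $\sup_{x\ge0}x\,\p(X_k'\ge x)$. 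A self-contained direct proof along your lines would essentially have to reprove that lemma inside the telescoping step; as written, that step is asserted rather than established, so your proof of (ii) is incomplete.
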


 Later we will apply Lemma~\ref{lem:e-FDR} to  $\mathbf X=\mathbf E'$ 
 and a specific choice of $f$ to show Theorems~\ref{th:e2} and~\ref{th:e1}.

Comparing Lemma~\ref{lem:e-FDR}  to \cite[Lemma 1]{RBWJ19} for the p-value setting, we observe that the first two statements have a very direct parallel. However the third statements are where the difference appears; the p-value statement requires a correction while the above e-value statement does not. To briefly elaborate, recall~\eqref{eq:setup-e} and correspondingly note that a null p-variable $P_k$ satisfies for any constant $c \in (0,1]$,
\begin{equation}\label{eq:setup-p}
\E\left[ \frac{\id_{\{P_k \leq c\}}}{c}\right] \leq 1.
\end{equation}
While  Lemma~\ref{lem:e-FDR} showed that \eqref{eq:setup-e} easily generalized from a constant $c$ to a data-dependent $f(\mathbf X)$,
\cite[Lemma 1]{RBWJ19} showed that a corresponding statement for \eqref{eq:setup-p} is true for p-variables only under a PRDS assumption, but otherwise an extra $\ell_K$ factor is paid under arbitrary dependence.

We anticipate this lemma to aid with the design and proof of FDR procedures with e-values in other contexts---just as the corresponding p-value lemma already has~\citep{RCJW19}---and we shall see how it is applied to e-BH next.

\subsection{FDR of e-BH for arbitrary e-values}
\label{sec:eBH-arbitrary}

We   analyze some  properties of the base e-BH procedure.
Below, the  e-values we feed into the base e-BH procedure 
are the boosted e-values from step 1 of the BH procedure.
We will omit the term ``boosted" while keeping in mind that the base e-BH can also be directly applied to the raw e-values. 
Let $(E'_1,\dots,E'_K)$ be the random vector of (boosted) e-values and $(e'_1,\dots,e'_K)$ be its realized value. 
The only property we will use  on $ E'_1,\dots,E'_K $ is that they are nonnegative. 
The next result is a more  precise analysis of the FDR of the base e-BH procedure   under arbitrary dependence. 

\begin{theorem}\label{th:e2}
  Applied to arbitrary non-negative random variables $ E'_1,\dots,E'_K $ and $\alpha\in (0,1)$, the base e-BH procedure $\cG(\alpha)$ satisfies 
  $$
\E\left[\frac{F_{\cG(\alpha)}}{R_{\cG(\alpha)} }\right] = \frac{\alpha}{ K}
\sum_{k\in \mathcal N}  \E\left[ t_\alpha \id_{\{E'_k\ge t_\alpha\}} \right] \le \frac{  K_0}{K} y_\alpha,  
$$
where  $t_\alpha$ is given by \eqref{eq:r1-talpha}, and
 \begin{equation}\label{eq:yalpha}y_\alpha=\frac 1 {   K_0} \sum_{k\in \mathcal N}\E\left[   T(\alpha E'_k)\right].  \end{equation}  
In particular, if $E'_1,\dots,E'_K$ are the raw e-values or the boosted e-values via \eqref{eq:enhance-2} or \eqref{eq:enhance-2p}, then $y_\alpha\le \alpha.$
\end{theorem}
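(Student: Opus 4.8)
The plan is to express the FDP in terms of the single data-dependent threshold $t_\alpha$ from Proposition~\ref{prop:r1-1}, and then bound each null contribution by $\E[T(\alpha E'_k)]$, exploiting that $\alpha t_\alpha$ always equals one of the grid values $K/j$. For the equality: by Proposition~\ref{prop:r1-1} the base e-BH procedure rejects $H_k$ iff $E'_k\ge t_\alpha$, so $F_{\cG(\alpha)}=\sum_{k\in\mathcal N}\id_{\{E'_k\ge t_\alpha\}}$ and $R_{\cG(\alpha)}=|\{k\in\mathcal K:E'_k\ge t_\alpha\}|$. When there is at least one rejection this equals $R(t_\alpha)$, and the identity $t_\alpha R(t_\alpha)=K/\alpha$ gives $R_{\cG(\alpha)}=K/(\alpha t_\alpha)$, so $F_{\cG(\alpha)}/R_{\cG(\alpha)}=(\alpha/K)\sum_{k\in\mathcal N}t_\alpha\id_{\{E'_k\ge t_\alpha\}}$; when there is no rejection both sides vanish (by $0/0=0$ and $\id_{\{E'_k\ge t_\alpha\}}=0$ for all $k$). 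Taking expectations gives the first displayed identity.

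For the inequality $\le(K_0/K)y_\alpha$, I would establish the almost-sure pointwise bound $\alpha t_\alpha\,\id_{\{E'_k\ge t_\alpha\}}\le T(\alpha E'_k)$ for each $k\in\mathcal N$: on $\{E'_k\ge t_\alpha\}$ we have $\alpha t_\alpha=K/R(t_\alpha)\in K/\mathcal K$ (since $R(t_\alpha)\in\{1,\dots,K\}$) and $\alpha E'_k\ge\alpha t_\alpha$, so the definition of $T$ in \eqref{eq:def-T} forces $T(\alpha E'_k)\ge\alpha t_\alpha$, while off that event the left side is $0$. Summing $\alpha\E[t_\alpha\id_{\{E'_k\ge t_\alpha\}}]\le\E[T(\alpha E'_k)]$ over $k\in\mathcal N$, dividing by $K$, and recalling \eqref{eq:yalpha} gives the bound. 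Equivalently, this is Lemma~\ref{lem:e-FDR}(iii) with $\mathbf X=\mathbf E'$ and $f=t_\alpha(\cdot)$, whose range lies inside $\alpha^{-1}(K/\mathcal K)$, so that the truncated variable satisfies $\alpha X'_k\le T(\alpha E'_k)$.

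For the final (``in particular'') claim: if the $E'_k$ are raw null e-values, then $T(x)\le x$ gives $\E[T(\alpha E'_k)]\le\alpha\E[E'_k]\le\alpha$ for $k\in\mathcal N$, hence $y_\alpha\le\alpha$; if instead $E'_k=b_kE_k$ with $b_k$ chosen by \eqref{eq:enhance-2}, or $E'_k=\phi_k(E_k)$ with $\phi_k$ chosen by \eqref{eq:enhance-2p}, then $\E[T(\alpha E'_k)]\le\alpha$ is exactly that boosting condition (evaluated at the true $\p\in H_k$ in the composite case), so again $y_\alpha\le\alpha$.

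The main obstacle is the pointwise step: the truncation $T$ onto the grid $K/\mathcal K\cup\{0\}$ is precisely what makes $\alpha t_\alpha\,\id_{\{E'_k\ge t_\alpha\}}\le T(\alpha E'_k)$ hold, because $\alpha t_\alpha$ is forced to equal some $K/j$; a coarser or finer comparison would either fail or lose a factor. The only other delicate points are the edge cases ($E'_k=\infty$, empty rejection set) and the need for the exact identity $t_\alpha R(t_\alpha)=K/\alpha$ rather than an inequality, both supplied by Proposition~\ref{prop:r1-1}. Notably, no $\ell_K$-type correction appears, since $\E[T(\alpha E'_k)]\le\alpha\E[E'_k]$ loses nothing for e-values, in contrast to the p-value setting.
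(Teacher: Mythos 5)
Your proposal is correct and follows essentially the same route as the paper: the equality comes from Proposition~\ref{prop:r1-1} together with the identity $t_\alpha R(t_\alpha)=K/\alpha$, and the inequality is exactly Lemma~\ref{lem:e-FDR}(iii) applied with $f=t_\alpha(\cdot)$ and range $\alpha^{-1}(K/\mathcal K)$, which you have simply unrolled into the explicit pointwise bound $\alpha t_\alpha\,\id_{\{E'_k\ge t_\alpha\}}\le T(\alpha E'_k)$. The edge-case handling and the ``in particular'' step also match the paper's argument.
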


 Let $(E_1,\dots,E_K)$ be the vector of raw e-values.
  Theorem~\ref{th:e2} shows that for any choice of $b_1 E_1,\dots,b_KE_K$ satisfying \eqref{eq:enhance-2} or $\phi_1(E_1),\dots,\phi_K(E_K)$ satisfying 
\eqref{eq:enhance-2p}, the FDR of the e-BH procedure is at most $  \alpha$ under arbitrary dependence. 
Moreover, the FDR control of the weighted e-BH follows from  
$$
 \frac{  K_0}{K} y_\alpha 
 =
 \frac 1 {   K } \sum_{k\in \mathcal N}\E\left[   T(\alpha E'_k)\right]
 \le \frac{1}{K}\sum_{k\in \mathcal N} w_k \alpha \le \frac{1}{K}\sum_{k\in \mathcal K} w_k \alpha =\alpha.
$$

The value of $y_\alpha$ depends on the distribution of the boosted e-values as well as the number $K$ of hypotheses.
In case the null distributions of the e-values are known, 
we would ideally set $y_\alpha$ to $\alpha$ (or close to $\alpha$) by properly choosing $b_k$ in \eqref{eq:enhance-2} or $\phi_k$ in \eqref{eq:enhance-2p}.

In case  $\E[T(\alpha b_k E_k)]$ is not easy to compute,
it might be convenient to use a weaker  bound
\begin{equation}\label{eq:yalpha2}
\bar y_{\alpha,k}(b_k):=  \E[\alpha  b_k E_k \id_{\{\alpha b_k E_k\ge 1 \}}] \ge \E[T(\alpha b_k E_k)],
\end{equation}
and set $\bar y_{\alpha,k}(b_k)\le \alpha$ (ideally an equality) by choosing $b_k\ge 1$.
Such a choice of $b_k$ always guarantees the FDR of the e-BH procedure to be at most $\alpha$ by Theorem~\ref{th:e2},
since 
$$\frac 1 {K_0}\sum_{k\in \mathcal N} \E[T(\alpha b_k E_k)] \le \frac 1 {K_0}\sum_{k\in \mathcal N} \bar y_{\alpha,k}(b_k)\le \alpha.$$ 
In addition to being easier to compute, an advantage of 
$  \bar y_{\alpha,k}(b_k)$ is that it depends purely on the distribution of $E_k$ and not on $K$,
and hence the boosted e-value $b_k E_k$ is ready to use for other experiments involving $H_k$.

\begin{example}\label{ex:calibrator}
We illustrate  $y_\alpha$ and $\bar y_{\alpha,k}(b_k)$ with a popular class of calibrators in \cite{S20} and \cite{VW20}.
Take $\lambda \in (0,1)$ and 
assume  the raw e-values are given by, for $k\in \mathcal K$,
\begin{equation}\label{eq:ex-2} 
E_k = \lambda P_k^{\lambda-1},
\end{equation}
where $P_k$ is a uniform random variable on $[0,1]$  if $k\in \mathcal N$.  
We will consider the boosted e-values $b E_1,\dots,b E_K$ where $b$ is a common boosting factor since the null e-values are identically distributed. 
In this case, $$y_\alpha= \E[T(\alpha b E_1)] = \int_{0}^1 T\left(\alpha b \lambda u^{\lambda-1}  \right)\d u .$$
A formula for $\bar y_{\alpha,k}(b)$ is simple:  
$$ \bar y_{\alpha,k}(b) =\alpha b \int_{0}^{(\alpha b \lambda)^{1/(1-\lambda)}} \lambda u^{\lambda-1} \d u   = (  \lambda^\lambda \alpha b)^{1/(1-\lambda)}.$$
For instance, if $\lambda =1/2$, then  $\bar y_{\alpha, k} (b)=(\alpha b)^2/2$.  
Setting $\bar y_{\alpha, k} (b)=\alpha $ yields $b=(2/\alpha)^{1/2}$.
In this example, all e-values are boosted by a multiplier of $(2/\alpha)^{1/2}$, which is substantial;  e.g.,  $b\approx 6.32$ if $\alpha=0.05$. Finally, note that the same boosting factor $b$ above is also valid whenever the p-values are not exactly uniform but instead stochastically larger than uniform.
\end{example}

\begin{example}\label{ex:LR}
We consider e-values obtained from the likelihood ratio between two normal distributions with different mean and variance $1$ as used in the numerical experiment of  \cite{VW20}.
Take $\delta >0$ which represents the difference in the alternative and the null means, and 
assume  the raw e-values are given by, for $k\in \mathcal K$,
\begin{equation}\label{eq:ex-LR} 
E_k = e^{\delta X_k -\delta^2/2}
\end{equation}
where $X_k$ is a standard normal random variable if $k\in \mathcal N$.  
Note that each null e-value is a log-normal random variable with parameter  $(-\delta^2/2,\delta)$.
For the boosted e-values $b E_1,\dots,b E_K$, 
We have  
$$ \bar y_{\alpha,k}(b) = \alpha b \E[ E_k \id_{\{E_k \ge 1/(\alpha b)\}} ]= \alpha b \Phi \left(\frac{\delta}{2}+ \frac{\log (\alpha b)}{  \delta }\right),$$
where $\Phi$ is the standard Gaussian cdf.
Setting $\bar y_{\alpha, k} (b)=\alpha $ yields the equation
$$
  b  \Phi \left(\frac{\delta}{2}+ \frac{\log (\alpha b)}{  \delta }\right) =1,
$$
which can be easily solved numerically. 
For instance, if $\delta =3$ and $\alpha=0.05$, then  $b\approx 1.37$ by solving $\bar y_{\alpha, k} (b)=\alpha $,
and if $\delta=4$ and $\alpha=0.05$, then  $b\approx 1.11$.
These choices of $b$ are all  conservative  since $ \bar y_{\alpha,k}(b) $ is a conservative bound for $y_\alpha$ (but $b$ is only slightly conservative if $K$ is  large).  
\end{example}

For raw e-values $(E_1,\dots,E_K)$ with unspecified distributions, Theorem~\ref{th:e2} gives an upper bound $\alpha K_0/K$ on the FDR of the base e-BH procedure.
This upper bound is usually quite loose since $y_\alpha$  in \eqref{eq:yalpha} is typically much smaller than $\alpha$; see e.g., Example~\ref{ex:calibrator}.
This upper bound cannot be improved in general without any additional information.

\begin{example}[Sharpness of the upper FDR bound in Theorem~\ref{th:e2}]
\label{ex:43}
Consider the following setup. Let the $K_0$ null raw e-values be given by $E_k= K/(K_0 \alpha) \id_A$, $k\in \mathcal N$,
where $A$ is an event with $\p(A) = \alpha K_0/K$. Moreover, we set all other e-values to   $0$, which means that they cannot be rejected. Hence, the false discovery proportion is $1$ as soon as there is any rejection.
It follows that  
$$ \mathrm{FDR}_{\cG(\alpha)}  = \p(R_{\cG(\alpha)}>0) =
\p(A)= \alpha K_0/K,
$$
which is the upper bound provided by Theorem~\ref{th:e2}.
\end{example}

\subsection{FDR of e-BH  for PRDS e-values}
\label{sec:eBH-PRDS}

In case p-values are independent or positively dependent, 
the BH procedure has a lower FDR guarantee than the one obtained with arbitrary dependence. 

We investigate a similar matter for  e-values, aiming for a bound  better than that of Theorem~\ref{th:e2}. 
This is possible via the result of Lemma~\ref{lem:e-FDR}. 
To summarize, if e-values are PRDS, 
the base e-BH procedure has a smaller FDR guarantee, and, consequently, the e-BH procedure allows a more powerful boosting in its step 1.

\begin{theorem}\label{th:e1}
Suppose that the raw e-values are PRDS.
Applied to arbitrary non-negative random variables $E'_1,\dots,E'_K$ and $\alpha\in (0,1)$,   the  base e-BH procedure $\cG(\alpha)$ satisfies  
\begin{equation}\label{eq:the1}
\E\left[\frac{F_{\cG(\alpha)}}{R_{\cG(\alpha)} }\right] \le  \frac{K_0}{K }z_\alpha   .
\end{equation} 
where 
 \begin{equation}\label{eq:zalpha}z_\alpha=\frac 1 {   K_0} \sum_{k\in \mathcal N}\max_{x\in K/\mathcal K}  x  {\p(\alpha E'_k \ge x)}.  \end{equation}  
In particular, if $E'_1,\dots,E'_K$ are the raw e-values or the boosted e-values via \eqref{eq:enhance-1} or \eqref{eq:enhance-1p}, then $z_\alpha\le \alpha.$
\end{theorem}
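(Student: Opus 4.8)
\textbf{Proof proposal for Theorem~\ref{th:e1}.}
The plan is to reduce the PRDS bound to a pointwise-in-$k$ application of part (ii) of Lemma~\ref{lem:e-FDR}, mirroring the structure of the proof of Theorem~\ref{th:e2} but exploiting positive dependence to replace the crude $T(\alpha E_k')$ bound by the sharper quantity $\max_{x\in K/\mathcal K}x\,\p(\alpha E_k' \ge x)$. First I would write the FDP of the base e-BH procedure, using Proposition~\ref{prop:r1-1}, in the form
$$
\frac{F_{\cG(\alpha)}}{R_{\cG(\alpha)}} = \sum_{k\in\mathcal N}\frac{\id_{\{E_k' \ge t_\alpha\}}}{R(t_\alpha)} = \frac{\alpha}{K}\sum_{k\in\mathcal N} t_\alpha \id_{\{E_k'\ge t_\alpha\}},
$$
where the last equality uses the identity $t_\alpha R(t_\alpha) = K/\alpha$ from Proposition~\ref{prop:r1-1} (so $1/R(t_\alpha) = \alpha t_\alpha/K$ on the event that there is at least one rejection, and the $k$-th term vanishes otherwise). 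Taking expectations and interchanging sum and expectation, it suffices to bound $\E[t_\alpha \id_{\{E_k'\ge t_\alpha\}}]$ for each fixed null index $k\in\mathcal N$.

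The key move is to express $t_\alpha$ as $f(\mathbf E')$ for a \emph{decreasing} function $f$ of the coordinates, so that Lemma~\ref{lem:e-FDR}(ii) applies. Indeed $t_\alpha$ as defined in \eqref{eq:r1-talpha} is the smallest $t$ with $tR(t) \ge K/\alpha$; since $R(t)$ is nonincreasing in $t$ and each $e_j'$ only enters through the counts $R(t) = |\{j: e_j' \ge t\}|\vee 1$, increasing any single coordinate $e_j'$ can only increase each $R(t)$, hence can only decrease $t_\alpha$. So $\mathbf x \mapsto t_\alpha(\mathbf x)$ is a decreasing measurable function, and its range is contained in $K/\mathcal K \cup \{0\}$ because $t_\alpha$ always equals some $e_j'$ threshold value or, more precisely, because $t_\alpha R(t_\alpha) = K/\alpha$ forces $t_\alpha = K/(\alpha R(t_\alpha))$ with $R(t_\alpha) \in \mathcal K$. (One should double-check this range claim carefully: it is what lets us identify $X_k' = \sup\{x \in I_f\cup\{0\}: x \le X_k\}$ in the lemma with the truncation $T(E_k')$-type quantity, so that $\sup_{x\ge 0} x\,\p(X_k' \ge x)$ becomes exactly $\max_{x\in K/\mathcal K} x\,\p(E_k' \ge x)$ after the $\alpha$-scaling.) Applying Lemma~\ref{lem:e-FDR}(ii) with $\mathbf X = \mathbf E'$, $f = t_\alpha(\cdot)$, and using that PRDS of the raw e-values is inherited by the boosted e-values $E_k' = \phi_k(E_k)$ since $\phi_k$ is increasing (Remark~\ref{rem:32}), gives
$$
\E\left[t_\alpha \id_{\{E_k' \ge t_\alpha\}}\right] \le \max_{x\in K/\mathcal K} x\,\p(E_k' \ge x).
$$
Multiplying by $\alpha/K$, summing over $k\in\mathcal N$, and recognizing the definition \eqref{eq:zalpha} of $z_\alpha$ yields \eqref{eq:the1}. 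Finally, if $E_k'$ is a raw null e-value the lemma's second inequality already gives $\max_{x\in K/\mathcal K}x\,\p(E_k'\ge x) \le \E[E_k'] \le 1$, hence $z_\alpha \le 1$; and if $E_k'$ is boosted via \eqref{eq:enhance-1} or \eqref{eq:enhance-1p} then by construction $\max_{x\in K/\mathcal K}x\,\p(\alpha E_k' \ge x) \le \alpha$ for each $k$, so averaging gives $z_\alpha \le \alpha$.

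The main obstacle I anticipate is the careful verification that $t_\alpha$, as a function of $\mathbf e'$, is genuinely decreasing \emph{and} has range inside $K/\mathcal K\cup\{0\}$ — both are needed to invoke Lemma~\ref{lem:e-FDR}(ii) with the right $X_k'$, and the subtlety is that $t_\alpha$ is defined through an infimum/level-crossing rather than an explicit formula, so monotonicity has to be argued via the monotonicity of $R(\cdot)$ in the data. A secondary subtlety is making sure the passage from $\sup_{x\ge0}x\,\p(X_k'\ge x)$ in the lemma to $\max_{x\in K/\mathcal K}x\,\p(\alpha E_k'\ge x)$ in \eqref{eq:zalpha} handles the $\alpha$-scaling and the $\id_{\{x\ge1\}}$ truncation in $T$ (cf.\ \eqref{eq:def-T}) consistently; once the range of $f$ is pinned down this is bookkeeping, but it is the step most likely to hide an off-by-one in the discretization grid.
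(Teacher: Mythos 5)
Your proposal follows essentially the same route as the paper's proof: decompose the FDP via Proposition~\ref{prop:r1-1} and the identity $t_\alpha R(t_\alpha)=K/\alpha$ into $\tfrac{\alpha}{K}\sum_{k\in\mathcal N}t_\alpha\id_{\{E_k'\ge t_\alpha\}}$, then apply Lemma~\ref{lem:e-FDR}(ii) with $f=t_\alpha(\cdot)$ decreasing. The one point you flagged as a worry is indeed where your bookkeeping goes wrong, and it needs fixing: the range of $f$ is $I_f=\alpha^{-1}K/\mathcal K$, \emph{not} $K/\mathcal K$, since $t_\alpha=K/(\alpha R(t_\alpha))$. Consequently $X_k'=\sup\{x\in I_f\cup\{0\}:x\le E_k'\}=T(\alpha E_k')/\alpha$, and Lemma~\ref{lem:e-FDR}(ii) yields
\[
\E\bigl[t_\alpha\id_{\{E_k'\ge t_\alpha\}}\bigr]\le\sup_{x\ge0}x\,\p(X_k'\ge x)=\max_{x\ge1}\frac{x}{\alpha}\,\p\bigl(T(\alpha E_k')\ge x\bigr)=\frac{1}{\alpha}\max_{x\in K/\mathcal K}x\,\p(\alpha E_k'\ge x),
\]
rather than $\max_{x\in K/\mathcal K}x\,\p(E_k'\ge x)$ as in your display; only with the extra $1/\alpha$ does multiplying by $\alpha/K$ and summing over $k\in\mathcal N$ produce exactly $\tfrac{K_0}{K}z_\alpha$. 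The same slip affects your last step: for raw null e-values you concluded only $z_\alpha\le1$, but the correct claim $z_\alpha\le\alpha$ follows immediately from Markov, since $x\,\p(\alpha E_k'\ge x)\le\E[\alpha E_k']\le\alpha$ for every $x\ge1$ (equivalently, $z_{\alpha,k}\le\E[T(\alpha E_k')]=\alpha y$-type bound, which is how the paper phrases it via $z_\alpha\le y_\alpha$). With these corrections your argument coincides with the paper's.
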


 Theorem~\ref{th:e1}  shows that, under the assumption of PRDS,  using the boosted e-values $b_1E_1,\dots,b_KE_K$ satisfying \eqref{eq:enhance-1} or $\phi_1(E_1),\dots,\phi_K(E_K)$ satisfying 
\eqref{eq:enhance-1p},
  the FDR of the e-BH procedure is at most $  \alpha$. For $b_k\ge 1$, let 
\begin{equation}
\label{eq:zalphak}
z_{\alpha,k} (b_k) = \max_{x\in K/\mathcal K}  x  {\p(\alpha b_k E_k \ge x)}.
\end{equation}
In step 1 of the e-BH procedure, we need to choose a boosting factor $b_k\ge 1$ such that $z_{\alpha,k} (b_k)\le \alpha$ (ideally, an equality). 
Under an extra condition, 
a suitable choice of $b_k$ admits a simple formula.

\begin{proposition}\label{prop:bk}
Suppose that $E_k$ is a continuously distributed null e-value. 
Let  $q_{1-\alpha}(E_k)$ be the left $(1-\alpha)$-quantile of $E_k$.
If
 \begin{equation}
\label{eq:e-cond}
t\mapsto t \p(  E_k\ge t) \mbox{~is decreasing on $[q_{1-\alpha}(E_k),\infty)$},
\end{equation}  
then $b_k:=(\alpha q_{1-\alpha} (E_k))^{-1}$ is the largest boosting factor which  satisfies \eqref{eq:enhance-1}. 
\end{proposition}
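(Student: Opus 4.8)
The plan is to reduce condition \eqref{eq:enhance-1} to a monotonicity statement about the single function $g(t):=t\,\p(E_k\ge t)$ on $[0,\infty)$, and then read off both the validity of the proposed $b_k$ and its maximality from the behaviour of $g$ at and beyond the quantile $q:=q_{1-\alpha}(E_k)$. First I would unwind \eqref{eq:enhance-1}: writing $c:=\alpha b_k$ and substituting $t=x/c$, the inequality $x\,\p(\alpha b_k E_k\ge x)\le\alpha$ is exactly $c\,g(t)\le\alpha$, i.e.\ $g(t)\le\alpha/c=1/b_k$, for every $t$ in the scaled grid $(1/c)\cdot(K/\mathcal K)=\{K/(kc):k\in\mathcal K\}$. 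Since $K/\mathcal K\subseteq[1,\infty)$ with minimum $1$ (attained at $k=K$), this grid is contained in $[1/c,\infty)$ and contains its left endpoint $1/c$; hence \eqref{eq:enhance-1} holds for $b_k$ if and only if $g(t)\le 1/b_k$ for all $t$ in this grid, and in particular it forces $g(1/c)\le 1/b_k$.

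Next I would verify that $b_k:=(\alpha q)^{-1}$ works. With this choice $c=1/q$, so the grid becomes $q\cdot(K/\mathcal K)\subseteq[q,\infty)$. Because $E_k$ is continuously distributed, $\p(E_k\le q)=1-\alpha$ with no atom at $q$, so $\p(E_k\ge q)=\alpha$ and therefore $g(q)=\alpha q=1/b_k$. By hypothesis \eqref{eq:e-cond}, $g$ is decreasing on $[q,\infty)$, so $g(t)\le g(q)=1/b_k$ for every $t\ge q$, and a fortiori on the grid; thus \eqref{eq:enhance-1} holds for $b_k$ (in fact with equality at $x=1$). I would also note that $b_k\ge 1$: by Markov's inequality $\p(E_k\ge 1/\alpha)\le\alpha\,\E[E_k]\le\alpha$, so $q\le 1/\alpha$ and hence $(\alpha q)^{-1}\ge 1$, confirming $b_k$ is a bona fide boosting factor.

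For maximality I would argue that no larger $b$ is admissible. Take $b>(\alpha q)^{-1}$, equivalently $c=\alpha b>1/q$, equivalently $1/c<q$. Evaluating the left-hand side of \eqref{eq:enhance-1} at the grid point $x=1$ (i.e.\ $k=K$) gives $\p(\alpha b E_k\ge 1)=\p(E_k\ge 1/c)$; since $1/c<q=q_{1-\alpha}(E_k)$, the definition of the left $(1-\alpha)$-quantile yields $\p(E_k\le 1/c)<1-\alpha$, hence $\p(E_k\ge 1/c)>\alpha$, so \eqref{eq:enhance-1} fails for $b$. Combined with the previous paragraph, this shows $b_k=(\alpha q)^{-1}$ is the largest boosting factor satisfying \eqref{eq:enhance-1}. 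The one place the hypothesis \eqref{eq:e-cond} is essential — and the only real obstacle — is the validity step: the choice $b_k=(\alpha q)^{-1}$ is designed precisely so that the smallest grid argument equals $q$, and monotonicity of $g$ on $[q,\infty)$ is then what guarantees, uniformly over the whole grid at once, that $g$ never climbs back above its value $\alpha q$ at $q$; the reduction, the identity $g(q)=\alpha q$ from continuity, and the maximality argument (which uses only the $x=1$ term and not \eqref{eq:e-cond}) are all routine.
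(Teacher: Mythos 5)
Your proof is correct and follows essentially the same route as the paper's: use \eqref{eq:e-cond} to see that the maximum in \eqref{eq:enhance-1} over the grid $K/\mathcal K$ is attained at the smallest point $x=1$, use continuity of $E_k$ to evaluate that term as exactly $\alpha$ under $b_k=(\alpha q_{1-\alpha}(E_k))^{-1}$, and note that any larger factor pushes that same term above $\alpha$. Your writeup simply fills in details the paper leaves terse (the change of variables, the explicit maximality argument, and the check that $b_k\ge 1$), all of which are fine.
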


Condition \eqref{eq:e-cond} is not uncommon as $ \alpha$ is typically small and the value $t\p( E_k\ge t) $ goes to $0$ as $t\to \infty$  since $E_k$ has a finite mean. Condition \eqref{eq:e-cond} is satisfied by, for instance, the e-values in Example~\ref{ex:calibrator} as well as their distributional mixtures.   
For a continuously distributed $E_k$, condition \eqref{eq:e-cond} can be equivalently expressed as 
 $$
\beta \mapsto \beta q_{1-\beta}(E_k)  \mbox{~is decreasing on $[1-\alpha,1)$}.
 $$
 
 Similar to  $\bar y_{\alpha,k}$ in \eqref{eq:yalpha2},
there is a conservative version of  $z_{\alpha,k}$ which does not depend on $K$, given by
\begin{equation}\label{eq:zalpha2}
\bar z_{\alpha,k} (b_k) = \sup_{x\ge 1}  x  {\p(\alpha b_k E_k \ge x)}.
 \end{equation} 
 In particular, if \eqref{eq:e-cond} holds, then 
 $\bar z_{\alpha,k} (b_k)=  z_{\alpha,k} (b_k)$ 
 for the choice $b_k$ in Proposition~\ref{prop:bk}.

\begin{example}\label{ex:calibratorPRDS} We look at the e-values in Example~\ref{ex:calibrator}.
For $k\in \mathcal N$,  we have
$$t\p(  E_k\ge t)  =t(\lambda   /t)^{1/(1-\lambda)} = (\lambda   t^{-\lambda})^{1/(1-\lambda)},$$
which is a decreasing function in $t$, and thus \eqref{eq:e-cond} holds.
In this case, for $b\ge 1$,  
$$z_{\alpha,k} (b) = \p( \alpha b E_k \ge 1)  = (\lambda  \alpha b )^{1/(1-\lambda)},
$$
and by Proposition~\ref{prop:bk}, the best choice of $b_k$ is 
$$
b  =(\alpha q_{1-\alpha}(E_k))^{-1} =(\lambda \alpha^{\lambda})^{-1}.
$$ 
Since $\bar y_{\alpha,k} (b)  =  (  \lambda^\lambda \alpha b)^{1/(1-\lambda)}$ in Example~\ref{ex:calibrator},
 we get 
$z_{\alpha,k} (b) /\bar y_{\alpha,k} (b)=\lambda$.
 Hence, the 
FDR is improved by a factor of roughly $\lambda$ from Theorem~\ref{th:e2} (arbitrary dependence) to Theorem~\ref{th:e1} (PRDS), which could be substantial if $\lambda$ is small.
For instance, if $\lambda=1/2$, then  $b =2 \alpha^{-1/2}$.  
For $\alpha=0.05$, we have $b \approx 8.94$, which should be compared with $b \approx 6.32$ in Example~\ref{ex:calibrator} under arbitrary dependence.
\end{example}

 \begin{example}\label{ex:LR2}
For the setup of e-values  in Example~\ref{ex:LR},  \eqref{eq:e-cond} does not always hold. .
Nevertheless, for $b\ge 1$, $ z_{\alpha,k} (b ) $ and $\bar z_{\alpha,k} (b ) $ in \eqref{eq:zalphak} and \eqref{eq:zalpha2} have simple formulas  
 $$
z_{\alpha,k} (b ) = \max_{x\in K/\mathcal K}  x\,\Phi \left( \frac{\log (\alpha b ) }{\delta} -\frac{\log x }{\delta}- \frac{\delta}{2} \right),
 $$
and
 $$
\bar z_{\alpha,k} (b ) = \max_{x\ge 1}  x\,\Phi \left( \frac{\log (\alpha b ) }{\delta} -\frac{\log x }{\delta}- \frac{\delta}{2} \right) .
 $$ 
Without specifying $K$, we use $\bar z_{\alpha,k} (b )$, and the value $b$ for $\bar z_{\alpha,k} (b ) =\alpha$ can be easily computed numerically. 
For instance, if $\delta =3$ and $\alpha=0.05$, then  $b\approx 7.88$ (compared with $b \approx 1.37 $  in Example~\ref{ex:LR}), and if $\delta =4$ and $\alpha=0.05$, then  $b\approx 10.31$ (compared with $b \approx 1.11 $  in Example~\ref{ex:LR}).
\end{example}

\subsection{An optimality result}
\label{sec:7}

There is a simple way of generating 
other e-testing procedures similar to the base e-BH procedure by transforming the   e-values.
In this section, working under the assumption that 
no distributional information on e-values is available, 
we focus on a procedure resulting from applying a common transform to all raw e-values. 
We obtain an optimality of the base e-BH procedure among all such transforms. 
 
Take a strictly increasing and continuous function $\phi:[0,\infty]\to [0,\infty]$ with $\phi(\infty)=\infty$ and $\phi(0)<1$. 
We shall call $\phi$ an \emph{increasing transform}.  
We design an e-testing procedure $\cG(\phi)$ by rejecting the $k_{e,\phi}^*$ hypotheses with  the largest e-values,
where 
\begin{equation}
\label{eq:e-k2}
k_{e,\phi}^*=\max\left\{k\in \mathcal K: \frac{k \phi(  e_{[k]})}{K} \ge1\right\}. \end{equation}
 It is clear that the choice of $\phi: t\mapsto \alpha t$ corresponds to $\tau_\phi=t_\alpha$ in Section~\ref{sec:4}, which yields the base  e-BH procedure $\cG(\alpha)$. 

Using  Theorem~\ref{th:e2} by choosing the boosted e-values as $\phi(E_k)/\alpha$, $k\in \mathcal K$, 
the FDR of $\cG(\phi)$  satisfies   \begin{equation}
 \label{eq:FDR-gen-e} \E\left[\frac{F_{\cG (\phi)}}{R_{\cG(\phi)} }\right] 
\le \frac{K_0}{ K}y_{\phi},  
\end{equation}   where 
$$
y_{\phi}:=\frac 1 {K_0} \sum_{k\in \mathcal N} \E[T(\phi(E_k))].
$$

The special choice $\phi:t\mapsto \alpha t$, corresponding to the  e-BH procedure, always guarantees 
$y_\phi=  y_\alpha \le \alpha$, where $y_\alpha$ is defined in \eqref{eq:yalpha}.
For a general choice of $\phi$, 
the FDR of $\cG(\phi)$ requires the knowledge of $y_{\phi}$, or at least an upper bound.

In the next result, we show that, if we require an FDR guarantee for arbitrary e-values, then the base e-BH procedure is optimal among the class $\cG(\phi)$. 
This is reassuring for us to use the base e-BH procedure as a canonical candidate for handling arbitrarily dependent e-values without further information. 

\begin{theorem}
\label{th:optimal}
Fix $\alpha \in (0,1)$ and $K$. For any increasing transform $\phi$, if $\cG(\phi)$   satisfies 
  $$
\E\left[\frac{F_{\cG(\phi)}}{R_{\cG(\phi)} }\right] \le  \alpha 
$$  for arbitrary configurations of e-values, then $\cG(\phi)\subseteq \cG(\alpha)$.
\end{theorem}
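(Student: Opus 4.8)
The plan is to prove the contrapositive: assuming $\cG(\phi)\not\subseteq\cG(\alpha)$, I will exhibit a configuration of genuine, arbitrarily dependent null e-variables on which $\cG(\phi)$ has FDR strictly larger than $\alpha$, contradicting the hypothesis.

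First I would distil a ``bad scale'' from the failure of inclusion. Pick a realized vector $\mathbf e^0$ and an index $j\in\cG(\phi)(\mathbf e^0)\setminus\cG(\alpha)(\mathbf e^0)$, and put $m:=|\cG(\phi)(\mathbf e^0)|\ (\ge 1)$. Since $\cG(\phi)$ acts on $\mathbf e^0$ exactly as the base e-BH procedure at level $1$ acts on $\phi(e^0_1),\dots,\phi(e^0_K)$, the computation of Proposition~\ref{prop:r1-1} (with $K/\alpha$ replaced by $K$) shows that $\cG(\phi)$ rejects exactly $\{k:e^0_k\ge s\}$, where $s:=\phi^{-1}(K/m)$ — well defined and positive because $K/m\ge1>\phi(0)$ and $\phi$ is continuous and strictly increasing — and that $|\{k:e^0_k\ge s\}|=m$. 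As $j$ is rejected by $\cG(\phi)$ we have $e^0_j\ge s$; as $j$ is \emph{not} rejected by $\cG(\alpha)$, Proposition~\ref{prop:r1-1} gives $e^0_j<t^0_\alpha$, the e-BH threshold for $\mathbf e^0$, hence $s<t^0_\alpha$. The definition of $t^0_\alpha$ as an infimum then forces $sm<K/\alpha$ (since exactly $m$ of the $e^0_k$ are $\ge s$), i.e.
\[
\alpha s<K/m .
\]

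Next I would build the violating configuration by a balanced cyclic design. Let all $K$ hypotheses be null, and define the cyclic blocks $S_i:=\{i,i+1,\dots,i+m-1\}\pmod K$ for $i\in\mathcal K$, so that each index lies in exactly $m$ of them. Take pairwise disjoint events $A_1,\dots,A_K$, each of probability $p:=\min\{1/K,\ 1/(ms)\}$ (admissible since $Kp\le1$), and set $E_k:=s\,\id_{\bigcup_{i:\,k\in S_i}A_i}$. Then $\E[E_k]=s\cdot mp\le1$, so every $E_k$ is a valid null e-variable. On $A_i$ the e-values equal $s$ on $S_i$ and $0$ elsewhere; since $m\phi(s)/K=m(K/m)/K=1$ while $k\phi(0)/K<1$ for every $k>m$, the procedure $\cG(\phi)$ rejects precisely the $m$ (null) hypotheses of $S_i$, so the FDP equals $1$ on $A_i$, whereas off $\bigcup_iA_i$ all e-values vanish and nothing is rejected. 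Therefore
\[
\mathrm{FDR}_{\cG(\phi)}=\p\Big(\bigcup_iA_i\Big)=Kp=\min\Big\{1,\ \frac{K}{ms}\Big\}>\alpha ,
\]
the last inequality because $\alpha s<K/m$ gives $K/(ms)>\alpha$. This contradicts $\mathrm{FDR}_{\cG(\phi)}\le\alpha$.

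I expect the construction to be the crux. The naive attempts — putting all $m$ relevant e-values at level $s$ on a single event, or tiling $\mathcal K$ by $\lfloor K/m\rfloor$ disjoint such blocks — deliver only $\mathrm{FDR}\le\alpha$, the slack coming from the rounding $\lfloor K/m\rfloor\le K/m$. The balanced, overlapping cyclic design removes this slack: each hypothesis is active with probability exactly $mp$, matching its full validity budget $1/s$, and disjointness of the $A_i$ makes a (purely false) discovery occur with probability $\p(\bigcup_iA_i)=Kp$ rather than a floored version of it. A minor side issue is the tie handling inherent in ``reject the largest $k^*_{e,\phi}$ e-values'', which is why I route Step~1 through the threshold description of Proposition~\ref{prop:r1-1} rather than through ranks.
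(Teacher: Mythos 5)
Your proof is correct and follows essentially the same route as the paper's: both extract from the failure of inclusion a scale $s=\phi^{-1}(K/m)$ with $\alpha s<K/m$, then build an all-null configuration in which exactly $m$ e-values sit at that critical level on a balanced family of events whose union has probability exceeding $\alpha$, so that every rejection is false. Your cyclic-block design is just an explicit instantiation of the arrangement the paper asserts exists (``exactly $k_0$ of the events occur together''), and your direct computation $\mathrm{FDR}=\p(\bigcup_i A_i)$ replaces the paper's appeal to the FDR identity of Theorem~\ref{th:e2}; both are fine.
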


\subsection{Applying the e-BH procedure to p-values}
\label{sec:gen-p}

  In this section, we compare the BY procedure  
\[
\cD'(\alpha) := \cD(\alpha/\ell_K)
\]
and the e-BH procedure $\cG(\alpha)$, as they both require no assumptions on the dependence structure. Although the two procedures are comparable on the dependence assumption, we remark that e-values generally use less information than p-values, as  p-values require  full specification of the distributions of the test statistics, whereas e-values only require the information of a known mean. Note that information on the boosting factors $b_k$, $k\in \mathcal K$  is contained in the null distribution of the test statistics.

To properly compare the p- and e-based procedures, we need to calibrate between p-values and e-values using calibrators. 
With such a calibration, the e-BH procedure gives rise to a general class of p-testing procedures with FDR guarantee for arbitrary p-values, similar to $\cG(\phi)$ in Section~\ref{sec:7}, 
and this class includes the base BH and the BY procedures as  special cases.
On a related note,
calibration to e-values serves as a crucial intermediate step in combining p-values under arbitrary dependence for testing a global null \citep[Section 5]{VWW20}.

Let $\psi:[0,1]\to [0,\infty]$ be a strictly decreasing and continuous function with $\psi(0)=\infty$. 
We shall call $\psi$ a \emph{decreasing transform}.  
We design a p-testing procedure $\cD(\psi)$  by rejecting    the $k_\psi^*$ hypotheses with the smallest p-values, where
 $$
k_\psi^*=\max\left\{k\in \mathcal K:  \psi(p_{(k)}) \ge \frac{K}{k }\right\},$$
with the convention $\max(\varnothing)=0.$  
A smaller transform function leads to less power of the testing procedure.  
A similar procedure using   different transforms  on individual p-values is analyzed in Appendix~\ref{app:multi}.
An important example of $\cD(\psi)$ is to choose  
$\psi:p\to \alpha/p$ for some $\alpha \in(0,1)$.  Note that   $$  \psi(p_{(k)})   \ge \frac{K}{k   } ~~~~\Longleftrightarrow ~~~~
\frac{K  p_{(k)} }{k }  \le   \alpha  .
 $$
In this case, $\cD(\psi)$ is precisely the BH procedure $\cD(\alpha)$.
 
 The  p-testing procedure $\cD(\psi)$ is equal to 
the step-up procedure of \cite{BY01} which rejects $k^*:=\max\{k\in \mathcal K: p_{(k)} \le \alpha_k\}$ hypotheses with the smallest p-values,
where $\alpha_k=\psi^{-1}(K/k)$, $k\in \mathcal K$.
Such a procedure also appears in  \cite{BR08}  where  $\alpha_k= \alpha \beta(k)/K$ and  $\beta$ is called a shape or reshaping function.
Our main purpose here is to compare the p- and e-testing procedures, not to propose new step-up p-testing procedures.

The objects $\phi(p_1),\dots,\phi(p_K)$ can be treated as boosted e-values. Hence, 
using Theorems~\ref{th:e2} and~\ref{th:e1},
we can easily calculate the FDR of $\cD(\psi) $.

\begin{proposition}
\label{prop:psi}
  For arbitrary p-values and a decreasing transform $\psi$, the testing procedure $\cD(\psi)$ satisfies 
$$
\E\left[\frac{F_{\cD(\psi)}}{R_{\cD(\psi)} }\right] \le \frac{    K_0}{K}  y_\psi,
$$
where 
\begin{equation}\label{eq:ypsi}
y_\psi:= \psi^{-1}(1) + \sum_{j=1}^{K-1} \frac{K}{j(j+1)}
 \psi^{-1} ( K/j )  .\end{equation}
If the p-values are PRDS, then  
$$
\E\left[\frac{F_{\cD(\psi)}}{R_{\cD(\psi)} }\right] \le \frac{    K_0}{K} z_\psi ,
$$ 
where 
\begin{equation}\label{eq:zpsi}
z_\psi: =\max_{t \in K/\mathcal K} t\psi^{-1}(t).
\end{equation}

\end{proposition}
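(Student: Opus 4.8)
The plan is to recognize $\cD(\psi)$ as a relabelled base e-BH procedure and then read off the two bounds from Theorems~\ref{th:e2} and~\ref{th:e1}. Fix any $\alpha\in(0,1)$ and set $e_k':=\psi(p_k)/\alpha$ for $k\in\mathcal K$; these are nonnegative quantities (not necessarily genuine e-values, but that is all Theorems~\ref{th:e2} and~\ref{th:e1} use), and since $\psi$ is strictly decreasing and continuous the $k$-th largest among $e_1',\dots,e_K'$ equals $e'_{[k]}=\psi(p_{(k)})/\alpha$. Hence $k e'_{[k]}/K\ge 1/\alpha$ is equivalent to $\psi(p_{(k)})\ge K/k$, so the base e-BH procedure $\cG(\alpha)$ fed with $(e_1',\dots,e_K')$ rejects exactly the $k_\psi^*$ hypotheses with the smallest p-values; that is, it coincides with $\cD(\psi)$, with $F_{\cD(\psi)}=F_{\cG(\alpha)}$ and $R_{\cD(\psi)}=R_{\cG(\alpha)}$ pointwise and the same null set $\mathcal N$. (Since $\cD(\psi)$ does not involve $\alpha$, the particular choice of $\alpha$ is immaterial.)

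For arbitrary p-values I would invoke Theorem~\ref{th:e2} applied to the $e_k'$: it gives $\E[F_{\cD(\psi)}/R_{\cD(\psi)}]\le (K_0/K)\,y_\alpha$ with $y_\alpha=K_0^{-1}\sum_{k\in\mathcal N}\E[T(\alpha E_k')]=K_0^{-1}\sum_{k\in\mathcal N}\E[T(\psi(P_k))]$. It then remains to show $\E[T(\psi(P))]\le y_\psi$ for any null p-variable $P$. The computational heart is the layer-cake identity
\[
T(x)=\id_{\{x\ge 1\}}+\sum_{j=1}^{K-1}\frac{K}{j(j+1)}\,\id_{\{x\ge K/j\}},\qquad x\in[0,\infty],
\]
which follows from the definition \eqref{eq:def-T}: crossing the threshold $K/j$ upward makes $T$ jump by $K/j-K/(j+1)=K/(j(j+1))$ for $j\le K-1$ and by $1$ at $x=1=K/K$ (equivalently, telescope $\sum_{j=m}^{K-1}K/(j(j+1))=K/m-1$). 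Taking expectations and using $\{\psi(P)\ge t\}=\{P\le\psi^{-1}(t)\}$ together with $\p(P\le u)\le u$ (with the convention $\psi^{-1}(t):=1$ for $t$ below the range of $\psi$) bounds each term by $\psi^{-1}$ of the corresponding threshold, giving $\E[T(\psi(P))]\le \psi^{-1}(1)+\sum_{j=1}^{K-1}\frac{K}{j(j+1)}\psi^{-1}(K/j)=y_\psi$. Summing over $k\in\mathcal N$ yields the first inequality of the proposition.

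For PRDS p-values I would first observe that $(e_1',\dots,e_K')=(\psi(P_1)/\alpha,\dots,\psi(P_K)/\alpha)$ is PRDS in the e-value sense, which is exactly the ``flip'' recorded in the Fact of Section~\ref{sec:2}: for a decreasing set $A$ and null index $k$, writing $g$ for the coordinatewise strictly decreasing map $\mathbf p\mapsto\psi(\mathbf p)/\alpha$, we have $\{\mathbf e'\in A\}=\{\mathbf P\in g^{-1}(A)\}$ with $g^{-1}(A)$ an increasing set, while $\{e_k'\ge x\}=\{P_k\le\psi^{-1}(\alpha x)\}$; so the monotonicity of $y\mapsto\p(\mathbf P\in g^{-1}(A)\mid P_k\le y)$ (PRDS of $\mathbf P$) transfers to the required monotonicity of $x\mapsto\p(\mathbf E'\in A\mid E_k'\ge x)$ (the integral normalization of a calibrator is irrelevant to this part of the Fact). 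Then Theorem~\ref{th:e1} applies and yields $\E[F_{\cD(\psi)}/R_{\cD(\psi)}]\le(K_0/K)\,z_\alpha$ with $z_\alpha=K_0^{-1}\sum_{k\in\mathcal N}\max_{x\in K/\mathcal K}x\,\p(\alpha E_k'\ge x)=K_0^{-1}\sum_{k\in\mathcal N}\max_{x\in K/\mathcal K}x\,\p(\psi(P_k)\ge x)$. Bounding $\p(\psi(P_k)\ge x)=\p(P_k\le\psi^{-1}(x))\le\psi^{-1}(x)$ gives $\max_{x\in K/\mathcal K}x\,\p(\psi(P_k)\ge x)\le\max_{t\in K/\mathcal K}t\,\psi^{-1}(t)=z_\psi$, and summing over $\mathcal N$ completes the proof.

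The only delicate point is bookkeeping: verifying the layer-cake identity for the truncation function $T$, fixing the convention for $\psi^{-1}$ outside the range of $\psi$, and the (routine) PRDS-transfer under the decreasing map. I do not anticipate a genuine obstacle, since the proposition is essentially a transcription of the already-proven FDR bounds for e-BH, its content being the explicit evaluation of $y_\alpha$ and $z_\alpha$ as $y_\psi$ and $z_\psi$ via Markov's inequality for the null p-variables.
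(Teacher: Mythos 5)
Your proof is correct and follows essentially the same route as the paper's: both identify $\cD(\psi)$ with the base e-BH procedure applied to a decreasing transform of the p-values and then read off the two bounds from Theorems~\ref{th:e2} and~\ref{th:e1}, bounding $\E[T(\psi(P_k))]$ by $y_\psi$ via Markov's inequality for null p-variables (your layer-cake decomposition of $T$ is just the summation-by-parts form of the paper's telescoping sum). The only cosmetic differences are that the paper takes $E_k=T(\psi(P_k))/\alpha$ with $\alpha=\E[T(\psi(P))]$ so the auxiliary variables are bona fide e-values (forcing a separate treatment of the degenerate case $\psi^{-1}(1)=0$), whereas your choice of a fixed arbitrary $\alpha$ and untruncated $\psi(p_k)/\alpha$ sidesteps that edge case by leaning on the fact that the theorems apply to arbitrary nonnegative variables.
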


If the decreasing transform  $\psi$ satisfies 
\begin{equation}
\label{eq:e-cond2}
t\mapsto t \psi^{-1}(t) \mbox{~is decreasing on $[1,\infty)$},
\end{equation} similarly to condition \eqref{eq:e-cond}, 
 we have $z_\psi=\psi^{-1}(1)$. 
We can replace $[1,\infty)$ by $[1,K]$ or $K/\mathcal K$ in \eqref{eq:e-cond2}; the current condition \eqref{eq:e-cond2} is slightly stronger but it does not depend on $K$.

For the specific choice $\psi:p\to \alpha/p$ which gives the BH procedure, 
we have $\psi^{-1}=\psi$,   $$y_\psi=  \alpha  + \sum_{j=1}^{K-1} \frac{\alpha  }{j+1}
 =\alpha \ell_K, \text{ and } z_\psi= \max_{t \in K/\mathcal K} t\psi^{-1}(t) = \alpha.$$
 By Proposition~\ref{prop:psi},  $\cD(\psi) =\cD(\alpha)$ has 
 FDR guarantee $K_0 \ell_K \alpha /K$ for arbitrarily dependent p-values,
 and FDR guarantee $K_0\alpha/K$ for PRDS p-values. 
  This gives an analytical  proof  of  the FDR guarantee of the BH and BY procedures in Theorem~\ref{th:BH}.

Let $f$ be a calibrator and $\psi=\alpha f$. We can see that $y_\psi \le \alpha$ by Theorem \ref{th:e2} since $f(p_1),\dots,f(p_K)$ are e-values. Conversely, for any decreasing transform $\psi$ satisfying $y_\psi\le \alpha$ and  taking values in $K/\mathcal K$ (recall that only values of $\psi$ in $K/\mathcal K$ matter), 
the function $f=\psi/\alpha$ 
is a calibrator
since $y_\psi=\int_0^1 \psi (p)\d p= \alpha \int_0^1 f(p) \d p$. Therefore, all $D(\psi)$ with $y_\psi\le 
\alpha$ can be obtained via calibration to e-values. 

\begin{remark}
As we mentioned above, via the relationship 
\begin{align}\label{eq:reshape2}\psi^{-1}(K/k) = 
   \alpha \beta(k)/K, \end{align} 
   one obtains the step-up procedure based on the shape function $\beta$. Hence, such procedures can also be equivalently expressed via calibration to e-values.
   In \cite{BR08}, an important condition on $\beta$ is
   \begin{align}\label{eq:reshape} 
   \beta(k)=\int_0^k x \d \nu(x),~k\in \mathcal K, \mbox{ for some probability measure $\nu$ on $(0,\infty)$}.
   \end{align}
   Indeed, if  \eqref{eq:reshape2} and \eqref{eq:reshape} hold, then 
   by \eqref{eq:ypsi},
  $$ y_\psi =\alpha \sum_{j=1}^K \frac{\beta(j)-\beta(j-1)}{j} = \alpha \sum_{j=1}^K \int_{j-1}^j  \frac{x}j \d \nu (x) \le \alpha \nu((0,\infty))=\alpha.$$
  Therefore, Proposition \ref{prop:psi} gives that $D(\psi)$ has FDR level at most $\alpha$, recovering the result in \cite{BR08}.
  Moreover, we can check that \eqref{eq:reshape} is also sufficient for \eqref{eq:e-cond2}, and thus $D(\psi)$ has FDR level at most $\alpha \beta(K)/K$ under PRDS by Proposition \ref{prop:psi}.
\end{remark}

   The inequality  
$\alpha \E\left[ t_\alpha f(t_\alpha) \right] \le  K_0z_\alpha$ in the proof of Theorem~\ref{th:e1}  is generally not  an equality.
Hence, the FDR bound provided by Proposition~\ref{prop:psi} may not be sharp in the case of PRDS p-values. 
Nevertheless,
in the next proposition we shall see that this bound is almost sharp under some extra conditions. From there, we obtain a weak optimality result on the base BH procedure.
\begin{proposition}
\label{prop:psi3}
Fix $\alpha \in (0,1)$ and $K$. For any decreasing transform $\psi$,  
if $\cD(\psi)$ satisfies 
$$
\E\left[\frac{F_{\cD(\psi)}}{R_{\cD(\psi)} }\right] \le    \alpha
$$ 
for arbitrary configurations of  PRDS p-values, 
then $\psi^{-1}(1)\le \alpha$. 
Moreover, if $\psi$ satisfies \eqref{eq:e-cond2}, then $\cD(\psi)\subseteq \cD(\alpha)$.  
\end{proposition}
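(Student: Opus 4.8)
I would establish the two assertions in order. The first comes from a single worst-case PRDS configuration; the second is then a deterministic comparison of the two step-up threshold sequences.

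For the first assertion, the plan is to exhibit a PRDS configuration on which $\cD(\psi)$ attains $\FDR=\psi^{-1}(1)$. Take all hypotheses null ($\mathcal N=\mathcal K$) and $P_1=\cdots=P_K=U$ with $U\sim\mathrm{U}[0,1]$. Each coordinate is then a (uniform) p-variable, and the vector is PRDS: for an increasing set $A\subseteq\R^K$, the set $\{u:(u,\dots,u)\in A\}$ is an up-set of the form $[c,1]$, so $x\mapsto\p(U\ge c\mid U\le x)$ equals $0$ for $x<c$ and $(x-c)/x$ for $x\ge c$, hence is increasing on $[0,1]$. All order statistics equal $U$, so the cutoff $k_\psi^*=\max\{k\in\mathcal K:\psi(U)\ge K/k\}$ equals $K$ when $\psi(U)\ge1$, i.e.\ $U\le\psi^{-1}(1)$, and equals $0$ otherwise. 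Thus $\cD(\psi)$ rejects either all $K$ hypotheses (all false, being null) or none, so $F_{\cD(\psi)}/R_{\cD(\psi)}=\id_{\{U\le\psi^{-1}(1)\}}$ and $\FDR_{\cD(\psi)}=\p\bigl(U\le\psi^{-1}(1)\bigr)=\psi^{-1}(1)\in[0,1]$. The hypothesis $\FDR_{\cD(\psi)}\le\alpha$ then forces $\psi^{-1}(1)\le\alpha$.

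For the second assertion, I would write both procedures in step-up form: since $\psi$ is strictly decreasing, $\psi(p_{(k)})\ge K/k\iff p_{(k)}\le\psi^{-1}(K/k)=:\alpha_k$, so $\cD(\psi)$ is the step-up procedure with critical values $\alpha_k$ and $\cD(\alpha)$ the one with $\alpha_k':=\alpha k/K$. It suffices to show $\alpha_k\le\alpha_k'$ for every $k\in\mathcal K$: then $\{k:p_{(k)}\le\alpha_k\}\subseteq\{k:p_{(k)}\le\alpha_k'\}$, hence $k_\psi^*\le k^*$ (respecting $\max\varnothing=0$), and the $k_\psi^*$ hypotheses with the smallest p-values rejected by $\cD(\psi)$ lie among the $k^*$ rejected by $\cD(\alpha)$. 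For $k\in\{1,\dots,K\}$ we have $t:=K/k\ge1$, so condition \eqref{eq:e-cond2} and the first assertion give
\[
\tfrac{K}{k}\,\psi^{-1}(K/k)=t\,\psi^{-1}(t)\le 1\cdot\psi^{-1}(1)=\psi^{-1}(1)\le\alpha,
\]
which rearranges to $\alpha_k=\psi^{-1}(K/k)\le\alpha k/K=\alpha_k'$, as needed.

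The only point requiring care is the choice of worst case in the first assertion: taking comonotone uniform nulls makes the FDP an all-or-nothing indicator of $\{U\le\psi^{-1}(1)\}$, and one must verify that this vector is genuinely a PRDS configuration (each null coordinate a valid p-variable, the joint law PRDS) and that the threshold domination $\alpha_k\le\alpha_k'$ transfers to rejection-set inclusion even with ties or empty rejections. I do not foresee any deeper obstacle; note that this construction also shows the PRDS bound $K_0\psi^{-1}(1)/K$ of Proposition~\ref{prop:psi} (using $z_\psi=\psi^{-1}(1)$ under \eqref{eq:e-cond2}) is attained, so that $\cD(\psi)\subseteq\cD(\alpha)$ is the tightest conclusion of this form.
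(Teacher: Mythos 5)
Your proposal is correct and follows essentially the same route as the paper: the same comonotone all-null uniform configuration to force $\psi^{-1}(1)\le\alpha$, and the same monotonicity consequence of \eqref{eq:e-cond2} (namely $t\,\psi^{-1}(t)\le\psi^{-1}(1)\le\alpha$ for $t\ge 1$, equivalently $\psi(p)\le\alpha/p$) to get the rejection-set inclusion. Your write-up is merely more explicit about verifying PRDS for the comonotone vector and about phrasing the comparison via step-up critical values $\alpha_k\le\alpha_k'$.
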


 Comparing the optimality of the e-BH procedure in Theorem~\ref{th:optimal}  with the optimality of the BH procedure  in Proposition~\ref{prop:psi3}, we note two differences: 
Theorem~\ref{th:optimal} is stated for arbitrary e-values whereas  Proposition~\ref{prop:psi3} is stated for PRDS p-values;
 Theorem~\ref{th:optimal}  imposes no assumption on $\phi$ whereas Proposition~\ref{prop:psi3} requires $\psi$ to satisfy \eqref{eq:e-cond2}.

Different from the case of PRDS p-values, 
 $\cD'(\alpha)$  
 and $\cD(\psi)$ may not be strictly comparable 
for arbitrarily dependent p-values.
We discuss this issue in Example~\ref{ex:compare} below.
The general message is that, in contrast to the case of PRDS p-values as shown in Proposition~\ref{prop:psi3}, the BY procedure is not necessarily always the best for arbitrarily dependent p-values.

\begin{example}\label{ex:compare}
We consider the calibrators in \eqref{eq:ex-2} 
by choosing $\psi : p \mapsto  \theta \lambda p ^{\lambda-1}$ for some $\theta >0$.
For simplicity, we take $\lambda =1/2$ (a similar procedure was proposed by \cite{Sarkar08}).
As we see from Example~\ref{ex:calibrator}, 
  $\cD(\psi)$ has an FDR guarantee of $y_\psi\le  \theta ^2/2$.
  To compare with $\cD'(\alpha)$, we choose $\theta = (2\alpha)^{1/2}$,
  so that both procedures have FDR guarantee of $\alpha$.
Let $k^*$ be the number of  hypotheses rejected by $\cD'(\alpha  )$.
Note that a sufficient condition for $k_\psi^*\ge k^*$ 
is 
$\psi(p_{k^*}) \ge K/k^*$. 
If we set
$$\gamma: = \frac{k^* \alpha  }{  K \ell_K} \approx p_{k^*}, $$
 then, approximately, the above condition is 
 $$
\psi(\gamma) \ge \frac K {k^*}~~~~\Longleftrightarrow~~~~\frac{k^*}{K} \ge \frac{2 }{ \ell_K}.
$$  
Note that $\ell_K\sim \log K$, and $k^*/K$ is the proportion of rejection among all hypotheses.   Hence,
$\cD(\psi)$ is more powerful than $\cD'(\alpha)$
roughly when the proportion of rejections exceeds $2/\log K$.
This conclusion is  independent of $\alpha$.
\end{example}

\section{Simulations: multiple testing for ordered hypotheses using multi-armed bandits}

\label{sec:simulation}

Here, 
we conduct simulation studies for a setting that merges multiple testing with an ordered variant of the multi-armed bandit problem.  Appendix~\ref{app:num} contains additional simulation results for a more classical non-sequential multiple testing setup involving z-tests. While it is more traditional to present the latter application in a paper, we find the former a more conceptually interesting case study.

 Consider a multi-armed bandit   with $K$ arms, where pulling arm $k$ produces an iid sample $(X^k_1,X^k_2,\dots)$ from a non-negative random reward $X^k$.
 The $K$ null hypotheses are $\E[X^k]\le 1$, $k=1,\dots,K$, which are nonparametric  and similar to Example~\ref{ex:21}. For interpretation's sake, think of $X^k_j$ as follows: each arm is an investment and $X^k_j$ is the return of the $k$-th investment on its $j$-th trial. Specifically, imagine that each investment $k$ starts with unit wealth and   the wealth  can be multiplied by $X^k_j$ when the arm is pulled the $j$-th time. 
 Our high-level aim is thus to find profitable investments as quickly as possible; although we do not write down a formal objective, this is not necessary to convey our main points. (One may equally think of testing if the $k$-th drug has an effect or not.) 
 
Consider a simple setting in which we have used side information or prior knowledge to arrange the arms in a prespecified order and we must collect data on arm $k$ before visiting arm $k+1$; thus previous arms cannot be revisited to obtain more samples. Each arm can be pulled at most $n$ times (the budget) before moving on to the next one. 

We describe a natural algorithm to tackle this problem (an algorithm consists of two parts: deciding when to stop pulling an arm and move to the next one, and deciding which nulls to reject at the end).
Our algorithm will use a multiple testing procedure $\cD$ such as BH or e-BH for both steps: we move onto the next arm if we have exhausted the budget for the current arm, or our multiple testing procedure can reject the null with the available information. To formalize this,
 we take an e-testing procedure as an example. (For a p-testing procedure,   e-values are replaced by p-values.) 
For $k=1,\dots,K$:
\begin{enumerate}
\item At the end of dealing with arm $k-1$, we can summarize the available information as a vector of e-values $\mathbf e_{k-1}:= (e_1,\dots,e_{k-1},1,\dots,1)$.
\item  Denote by $e_{k,j}$ the e-value obtained after pulling the $k$-th arm $j$ times. We pull arm $k$  and stop pulling after $T_k$ trials if either
\begin{enumerate}
\item $|\cD  (e_1,\dots,e_{k-1},e_{k,T_k},1,\dots,1)|>|\cD(\mathbf e_{k-1})|$, i.e., there is at least one new discovery, or
\item arm $k$ has been pulled $T_k=n$ times, i.e., the budget is exhausted.
\end{enumerate}  
\item Set $e_k:=e_{k,T_k}$ and move on to arm $k+1$.
\end{enumerate}
After all $K$ arms are finished, we apply the procedure $\cD$ to $\mathbf e_K:=(e_1,\dots,e_K)$ to determine the rejected hypotheses. 
 
Since we are in a nonparametric setting, the natural choices of e-values and p-values are based on e-processes.  More precisely, 
the e-value $e_{k,j}$ and the p-value $p_{k,j}$ are realized by, respectively, 
$$E_{k,j}:=\prod_{i=1}^j X_i^k \mbox{~~~and~~~} P_{k,j} :=\left(\max_{i=1,\dots,j} E_{k,i}\right)^{-1}, $$ 
for each $k=1,\dots,K$ and $j=1,\dots,n$. Also, define $E_{k,0}=P_{k,0}=1$ for simplicity. It is not hard to see that if $H_k$ is true then $(E_{k,j})_{j=0}^n$ is an e-process.
The final e-variables $E_{k}$ and p-variables $P_{k}$ are obtained by  $$
E_k= E_{k,T_k}~~~\mbox{and}~~~ P_k=P_{k,T_k},~~k=1,\dots,K.
$$ 
The fact that the above stopped e-processes yield valid p-values is obtained by invoking  Ville's inequality~\citep{SSVV11,HRMS20}. 

The above algorithm is a natural baseline in the sense that, for $\cD$ being BH, e-BH or BY, one cannot obtain more rejections than this algorithm, even if all arms are pulled $n$ times.
The algorithm may be seen as an ordered version of  that of \cite{JJ18}, but
 a detailed discussion is not necessary here.

It is important to note that, even assuming data across arms are mutually independent,  the produced e-values (or p-values) are dependent in a complicated way; a larger previous realization leads to a smaller threshold for the next stopping rule. Indeed, we have $T_k < n$ only if Step 2(a) was invoked, meaning that $e_{k,T_k} \geq \tfrac{K}{|\cD(\mathbf e_{k})| \alpha}$. Hence, some negative dependence exists and the BH procedure does not control FDR theoretically.
There are, however, two p-testing procedures that have valid FDR: the BY procedure $\cD(\alpha/\ell_K)$ which is designed for arbitrary dependence, and the \emph{self-consistent or compliant BH} (cBH) procedure  $\cD(\alpha')$,  where $\alpha'$ is given in \eqref{eq:alphaprime}; i.e., it satisfies $\alpha'(1-\log(\alpha'))=\alpha$   (e.g., $\alpha'=0.0087$ for $\alpha =0.05$).
Both the BY and the cBH procedure give a   FDR guarantee of $\alpha$ in our experiments, although  cBH relies on the independence of data across arms (BY and e-BH do not).\footnote{We briefly explain the validity of cBH  in our setting, which is inspired by observations of \cite{JJ18}. First,  define the \emph{unobserved} latent p-values $P'_1,\dots,P'_K$ by $P'_k= P_{k,n}$, $k\in \mathcal K$. Note that $P'_1,\dots,P'_K$ are independent --- since every arm is sampled $n$ times, there is no adaptivity, and thus no dependence.
Further, $P_k\ge P'_{k}$ by definition, and so
the output (rejection set) of applying BH to $P_1,\dots,P_K$ is a subset of that of applying BH to $P'_1,\dots,P'_K$.
Therefore, applying BH to $P_1,\dots,P_K$ can been as a (randomized) self-consistent procedure applied to the  independent latent p-values $P'_1,\dots,P'_K$, and its FDR control under the correction \eqref{eq:alphaprime} follows  by noting that the proof technique in \cite{Su18}  directly applies to randomly selected self-consistent rejection sets.}

Therefore, we will compare the four procedures  e-BH, BH, BY and cBH  in our numerical experiments, while keeping in mind that BH does not have a valid FDR in theory, although 
by design it has more rejections than the other three procedures. 
We summarize the allowed conditions for a valid FDR guarantee for these four methods in Table~\ref{tab:mbp-valid}. 

\begin{table}[th] 
  \begin{center} 
    \caption{Conditions for the validity of the testing algorithm. To allow for dependence among data across arms, we only require for $k\in \mathcal N$, $\E[X^k_j|\mathcal G]\le1$ where $\mathcal G$ is  the available information before we observe $X^k_j$ (weaker than independence used in our experiments); see Example~\ref{ex:21}.}
    \label{tab:mbp-valid}
  \small  
    \begin{tabular}{rccc }
            &  {arbitrarily dependent} &   {arbitrarily dependent}  & FDR guarantee in    \\ 
 & data   across arms & stopping rules $T_k$  &     {our experiments}
 \vspace{2pt}      \\  \hline
\rule{0pt}{3ex}   e-BH & YES  &  YES & valid  at level  $\alpha K_0/K$  \\
BH   & NO & NO   & not valid \\
BY   & YES  & YES    &  valid  at level  $\alpha K_0/K$     \\
cBH & NO &  YES  &   valid  at level  $\alpha K_0/K$     \\
    \end{tabular} 
  \end{center} 
 \end{table}

Below we describe the data generating process used in our experiments (note that the design and the validity of   the above procedures  do not depend on any knowledge of the data generating process). As mentioned above, arms are naturally ordered  such that more promising arms 
come first.
More specifically, arm $k$ is non-null with probability $\theta (K-k+1)/(K+1)$ where $\theta\in [0,1]$ is a parameter. The expected number of non-nulls in this setting is $\theta/2$. 
Further,   let $s_k$ be the strength of signal in each non-null hypothesis, 
which follows an iid exponential distribution with mean $\mu$. With this setting, some non-nulls may have a very weak signal and they are almost impossible to  distinguish  from a null, whereas some other non-nulls with a strong signal may be rejected   quickly with a few pulls. 
Conditional on $s_k$, the data $X^k_1,\dots,X^k_n$  for arm $k$ are iid following a log-normal distribution 
$$
X^k= \exp\left(Z^k+s_k\id_{\{k\in\mathcal K\setminus \mathcal N\}} -1/2\right) 
$$
where $Z^1,\dots,Z^K$ are iid standard normal.  
Note that the data  are independent across arms, and hence the cBH procedure is valid (see Table~\ref{tab:mbp-valid}).

In the experiments, we set $\alpha=0.05$ and $\theta=1/2$ (on average, $1/4$ of all hypotheses are non-null).   
Results for some values of other parameters are summarized in Table~\ref{tab:mbp-2}.

 \begin{table}[th] 
\centering
  \begin{center} 
  \caption{Simulation results  in the multi-armed bandit setting,
  where 
    $R$ is the number of rejected hypothesis, 
$B$ is the proportion of unused budget (number of unused pulls divided by $nK$), 
and  TD is the number of true discoveries (rejected non-null hypotheses).
  Each number is computed over an average of 500 trials. 
  The default values of parameters are  $K=500$, $n=50$ and $\mu=1$,
  and each panel may have one parameter value different from the default.
  }
    \label{tab:mbp-2} 
       \begin{subtable}[t]{0.49\textwidth} 
    \caption{Default 
    }\centering
     \begin{tabular}{rcccc}
            & $R$ &$B\%$ & TD    & FDP\% \\ \hline
e-BH &74.4 &11.42 &73.2 &1.58\\
BH   &77.0 &11.44 &75.3 &2.13\\
BY   &70.6 &10.06 &70.4 &0.31\\
cBH  &71.1& 10.16 &70.8& 0.36\\
    \end{tabular}
      \end{subtable}   
        \begin{subtable}[t]{0.49\textwidth} 
    \caption{$K=2000$ 
    }\centering
    \begin{tabular}{rcccc}
            & $R$ &$B\%$ & TD    & FDP\% \\ \hline 
e-BH & 297.6& 11.39 &293.2 & 1.48\\
BH   &307.8 &11.41 &301.4 &2.07\\
BY  & 281.2 & 9.95 &280.4& 0.26\\
cBH  & 284.5 &10.15& 283.5& 0.36\\
    \end{tabular}
    \end{subtable} 
            \\
    ~
    \\
      \begin{subtable}[t]{0.48\textwidth} 
    \caption{$n=10$ 
    }\centering
    \begin{tabular}{rcccc}
            & $R$ &$B\%$ & TD    & FDP\% \\ \hline 
e-BH &47.7& 3.99& 47.3 & 0.83\\
BH   &49.3 &4.01& 48.7 & 1.06\\
BY   &38.4& 2.77 &38.4& 0.08\\
cBH  &39.2& 2.85 &39.2 &0.11\\
    \end{tabular}
      \end{subtable}  
      ~  
        \begin{subtable}[t]{0.48\textwidth} 
    \caption{$n=100$ 
    }\centering
    \begin{tabular}{rcccc}
            & $R$ &$B\%$ & TD    & FDP\% \\ \hline 
e-BH &79.1 &13.48 &77.9 &1.50\\
BH  & 81.3 &13.50 &79.5 &2.13\\
BY   &76.4 &12.36 &76.1 &0.35\\
cBH  &76.7& 12.44 &76.4& 0.41\\
    \end{tabular}
    \end{subtable} 
            \\
    ~
    \\
      \begin{subtable}[t]{0.48\textwidth} 
    \caption{$\mu=0.5$ 
    }\centering
    \begin{tabular}{rcccc}
            & $R$ &$B\%$ & TD    & FDP\% \\ \hline
e-BH & 43.5 &5.77 &42.9 &1.54\\
BH & 46.3& 5.80 &45.3& 2.13\\
BY & 39.6 &4.66 &39.5& 0.27\\
cBH &40.1 &4.74 &40.0& 0.35\\
    \end{tabular}
      \end{subtable}  
      ~ 
        \begin{subtable}[t]{0.48\textwidth} 
    \caption{$\mu=2$ 
    }\centering
    \begin{tabular}{rcccc}
            & $R$ &$B\%$ & TD    & FDP\% \\ \hline 
e-BH &97.4& 16.46 &95.9& 1.54\\
BH  & 99.3 &16.47& 97.2 &2.07\\
BY  & 94.3& 15.23 &94.1 &0.29\\
cBH & 94.6& 15.32 &94.3& 0.35\\
    \end{tabular}
    \end{subtable} 

  \end{center} 
  
 \end{table}

In all settings of Table~\ref{tab:mbp-2}, 
 we   see that the e-BH procedure outperforms both BY and cBH by providing more discoveries and using a smaller number of pulls. 
 The BY and cBH procedures are  both quite conservative and they perform similarly, with cBH being  better for large $K$ as expected. 
The numbers of true discoveries of e-BH  are  slightly lower than those of  BH, but, as we explained above,   BH   does not have a theoretical guarantee of FDR   (its realized FDP is not larger than $\alpha$ in our experiments, but this may be due to the fact that the null p-values are not uniform).
 
 For many other settings of parameters and distributions of $s_k$ that we tried,
the relative performance of the four methods is always qualitatively similar.

\section{A real-data example: detecting cryptocurrencies with positive expected return}
\label{sec:realdata}
 
 In this section we provide a real-data example in the spirit of Example \ref{ex:21} to detect cryptocurrencies (which we call \emph{coins}, like \texttt{Bitcoin}, for brevity) with positive expected return.  This analysis can be easily conducted for other financial assets or trading strategies.

We collect daily price data of coins from Jan 4, 2015 to Jun 30, 2021 (price data are obtained from \texttt{CoinGecko}). We choose to start from 2015 so that we have a few hundreds of coins to work with. 
There were 496 coins that had trading prices on Jan 4, 2015 (the list is from \texttt{CoinMarketCap}), among which 126 were active (``alive") on Jun 30, 2021.
We order the coins by their market capitalization in Jan 2015 from the largest to the smallest (e.g.,  the first being \texttt{Bitcoin}).

Let $Y_{k,j}$ be the price of  coin  $k$ at the end of the $j$-th month, $j=1,\dots,T$ and $k=1,\dots,K$. In our data set, $T=78$ and $K=496$ (or $100$, $200$; see later). 
Let
 $X_{k,j}=Y_{k,j}/Y_{k,j-1}$ be the $j$-th month growth of the coin $k$, $j\ge 1$, where $Y_{k,0}$ is the initial price of coin $k$.
The  $k$-th  null hypothesis  is  $\E[X_{k,j} \mid \mathcal F_{j-1}] \le 1$ for $j=1,\dots,T$; that is, during the considered period of time, coin $k$ has a monthly expected growth no larger than $1$ given previous market information $\mathcal F_{j-1}$; see Example \ref{ex:21}.
We are interested in which coins  generate positive expected return during the period. (However, our goal is not to predict future price movements, on which we do not have much to say.)  
 
For each coin $k$, we start with an initial wealth of $\$1$,  invest $\lambda \in [0,1]$ proportion of our money into the coin and rebalance every month. We ignore transaction costs. The portfolio wealth process $ (W_{k,t})_{t=0,1,\dots,T}  $ is given by
$$W_{k,0}=1 ~~~\mbox{and}~~~
W_{k,t} = \prod_{j=1}^t (1-\lambda + \lambda X_{k,j}),~~~t=1,\dots,T.
$$
Under the $k$-th null hypothesis, $(W_{k,t})_{t=0,1,\dots,T} $ is an e-process.  
Similarly to Section \ref{sec:simulation}, 
we construct e-values and p-values by $$E_{k}:= W_{k,T} \mbox{~~~and~~~} P_{k} :=\left(\max_{t=0,1,\dots,T} W_{k,t}\right)^{-1}.$$ In general, $\lambda$ is allowed to depend on time and past data, and that would correspond to an adaptive trading strategy; we choose $\lambda$ to be a constant here for simplicity. Note that $E_k$ above does not require a disclosure of the intermediate wealth values, which may not be available in more complicated applications where privacy or propriety is concerned. 

We consider two simple strategies, $\lambda=1$ which corresponds to buying and holding the coin, and $\lambda =1/2$ which corresponds to monthly rebalancing to maintain half wealth invested in the coin.  
Note that for $E_k$ with $\lambda =1$, we do not need intermediate price data other than the initial and the terminal prices. The vanished coins (dead during the considered period, price data not included in \texttt{CoinGecko})  are treated as having insignificant p-values and e-values; recall that for the procedures in this paper, p-values larger than $\alpha$ and e-values less than $1/\alpha$ can be safely treated as $1$. 

Among the 496 coins listed in Jan 2015, many had tiny market capitalization and did not have  meaningful trading activities. For this reason, investing in the full set of 496 coins may not be the most appropriate due to liquidity. 
Instead,  it may be sensible to invest only in 200 or 100 coins with the largest market capitalization in Jan 2015.
We thus consider the three cases of $K=496$ (126 alive in Jun 2021), $K=200$ (92 alive) and $K=100$ (54 alive).

The coin returns and the portfolio values are highly dependent in a complicated way. We will apply   the e-BH procedure and the BY procedure to the e-values and p-values to select coins, as both methods are valid under arbitrary dependence. 
 The results are reported in Table \ref{tab:r1-2}.
 
%
%
%
    \begin{table}[th] 
  \begin{center} 
    \caption{Number of coins selected by e-BH and BY  procedures}
    \label{tab:r1-2} 
    \begin{tabular}{rccccccccc}
            &  \multicolumn{3}{c}{buy and hold}  &   \multicolumn{3}{c}{50\%-rebalancing}    &   \multicolumn{3}{c}{30\%-rebalancing}   \vspace{2pt}    \\ 
\#total coins &  ~496~ & ~200~ & ~100~  & ~496~ &  ~200~     & ~100~ & ~496~ &  ~200~      & ~100~
 \vspace{2pt}      \\  \hline
\rule{0pt}{3ex}  e-BH  5\%  &3  & 4   & 0 &42  &28  & 13  &21  &12  & 6 \\ 
    BY   5\% &11  &0 & 0  &  15 &11&  6 &  15 &4&  5  \\ 
 e-BH    10\%  &13 & 7      & 2 & 60 &48& 24 & 32 &22& 8      \\  
 BY 10\%    & 18 & 7      & 0  & 25 &15 & 7 & 13 &7 & 7  \\ 
    \end{tabular} 
  \end{center} 
 \end{table} 
  
We observe that 
the e-BH procedure produces more discoveries than the BY procedure in almost all considered settings, except for $(K,\lambda)=(496,1)$.
In addition, we impose essentially no model assumptions in this application, and the  e-BH procedure can be applied even if only initial and final prices are observed.  On a side note,  for most coins, the 50\%-rebalancing strategy produces higher terminal wealth $W_{k,T}$ than the buy-and-hold  and 30\%-rebalancing strategies,  and this could  partially be explained by a combination of the very high volatility  and the high return of coin prices during the considered period.


\section{Conclusion}
 
We have introduced the e-BH procedure to achieve FDR control with e-values. Some highlights of the e-BH procedure are summarized below:
\begin{enumerate}
\item It works for arbitrarily dependent e-values (Theorem \ref{th:validity});
\item it requires no information on the configuration of the input e-values, and also works for weighted e-values (Section \ref{sec:52});
\item it allows for boosting  e-values if partial distributional information is available on some e-values (Section \ref{sec:42});
\item it gives rise to a class of p-testing procedures which include both the BH procedure and the BY procedure as special cases along with proofs of FDR control (Proposition~\ref{prop:psi});
\item it is optimal among the class of e-testing procedures $\cG(\phi)$ in the setting of arbitrary e-values  (Theorem~\ref{th:optimal}). 
\end{enumerate}

Although the e-BH procedure is primarily designed for cases where e-values are directly available (not calibrated from bonafide p-values), even the p-value calibrated e-BH procedure may outperform the BY procedure in some settings (see the simulation results in Appendix~\ref{app:num}). 
There are two more complicated situations where the e-BH procedure can be applied:
\begin{enumerate}
    \item  Natural e-values are   available for some hypotheses, whereas natural p-values are   available for the others. Depending on proportion, one may calibrate them all to p-values or all to e-values. 
    \item A hypothesis has both an e-value and a p-value (or even multiple p-values and e-values), which may be dependent and obtained from different experiments. In such a situation, the power comparison of e- and p-based procedures depends on the quality of those e-values and p-values. One may   choose to combine them into a single e-value or p-value for each hypothesis, e.g., using the methods in \cite{VW20}. 
\end{enumerate}
Thorough theoretical and empirical studies of the last two ``hybrid'' situations are needed to better understand the comparative advantages of p-values and e-values.

Finally,  an important issue on the FDR literature is to incorporate certain  assumptions on the dependence structure among p-values. A popular choice is to impose a multivariate Gaussian structure; see e.g., \cite{Guo2014},   \cite{Barber2015},  \cite{DR15} and \cite{FL20}. One may naturally wonder whether there is additional boosting for e-values in a multivariate Gaussian setting, and this leads to interesting questions for future studies.

 \subsubsection*{Acknowledgements}
 
 We thank an Editor, an Associate Editor, and two referees for helpful comments on an earlier version of the paper. We also thank Peng Ding, Lihua Lei, Song Mei, and Vladimir Vovk   for many useful discussions, and  Ziyu Chi for excellent research assistance. 
 AR acknowledges NSF DMS CAREER grant 1945266.
 RW acknowledges financial support from NSERC grants  (RGPIN-2018-03823, RGPAS-2018-522590).  
 
\bibliographystyle{apalike}
\bibliography{local}

\begin{thebibliography}{}

\bibitem[Barber and Cand{\`e}s, 2015]{Barber2015}
Barber, R.~F. and Cand{\`e}s, E.~J. (2015).
\newblock Controlling the false discovery rate via knockoffs.
\newblock {\em The Annals of Statistics}, 43(5):2055--2085.

\bibitem[Barber and Ramdas, 2017]{BR17}
Barber, R.~F. and Ramdas, A. (2017).
\newblock The p-filter: multilayer false discovery rate control for grouped
  hypotheses.
\newblock {\em Journal of the Royal Statistical Society Series B},
  79(4):1247--1268.

\bibitem[Barras et~al., 2010]{BSW10}
Barras, L., Scaillet, O., and Wermers, R. (2010).
\newblock False discoveries in mutual fund performance: Measuring luck in
  estimated alphas.
\newblock {\em Journal of Finance}, 65(1):179--216.

\bibitem[Benjamini and Hochberg, 1995]{BH95}
Benjamini, Y. and Hochberg, Y. (1995).
\newblock Controlling the false discovery rate: a practical and powerful
  approach to multiple testing.
\newblock {\em Journal of the Royal Statistical Society B}, 57:289--300.

\bibitem[Benjamini and Hochberg, 1997]{BH97}
Benjamini, Y. and Hochberg, Y. (1997).
\newblock Multiple hypotheses testing with weights.
\newblock {\em Scandinavian Journal of Statistics}, 24(3):407--418.

\bibitem[Benjamini and Yekutieli, 2001]{BY01}
Benjamini, Y. and Yekutieli, D. (2001).
\newblock The control of the false discovery rate in multiple testing under
  dependency.
\newblock {\em The Annals of Statistics}, 29:1165--1188.

\bibitem[Blanchard and Roquain, 2008]{BR08}
Blanchard, G. and Roquain, E. (2008).
\newblock Two simple sufficient conditions for {FDR} control.
\newblock {\em Electronic Journal of Statistics}, 2:963--992.

\bibitem[Delattre and Roquain, 2015]{DR15}
Delattre, S. and Roquain, E. (2015).
\newblock New procedures controlling the false discovery proportion via
  {R}omano--{W}olf’s heuristic.
\newblock {\em The Annals of Statistics}, 43(3):1141--1177.

\bibitem[Drton, 2009]{D09}
Drton, M. (2009).
\newblock Likelihood ratio tests and singularities.
\newblock {\em The Annals of Statistics}, 37(2):979--1012.

\bibitem[Efron, 1969]{E69}
Efron, B. (1969).
\newblock Student's t-test under symmetry conditions.
\newblock {\em Journal of the American Statistical Association},
  64(328):1278--1302.

\bibitem[Embrechts et~al., 2015]{EWW15}
Embrechts, P., Wang, B., and Wang, R. (2015).
\newblock Aggregation-robustness and model uncertainty of regulatory risk
  measures.
\newblock {\em Finance and Stochastics}, 19(4):763--790.

\bibitem[Finner et~al., 2009]{finner2009false}
Finner, H., Dickhaus, T., and Roters, M. (2009).
\newblock On the false discovery rate and an asymptotically optimal rejection
  curve.
\newblock {\em The Annals of Statistics}, 37(2):596--618.

\bibitem[Fithian and Lei, 2020]{FL20}
Fithian, W. and Lei, L. (2020).
\newblock Conditional calibration for false discovery rate control under
  dependence.
\newblock Technical Report
  \href{https://arxiv.org/abs/2007.10438}{arXiv:2007.10438}.
\newblock To appear in the \emph{Annals of Statistics}.

\bibitem[Genovese and Wasserman, 2004]{Genovese/Wasserman:2004}
Genovese, C.~R. and Wasserman, L. (2004).
\newblock A stochastic process approach to false discovery control.
\newblock {\em The Annals of Statistics}, 32:1035--1061.

\bibitem[Genovese and Wasserman, 2006]{Genovese/Wasserman:2006JASA}
Genovese, C.~R. and Wasserman, L. (2006).
\newblock Exceedance control of the false discovery proportion.
\newblock {\em Journal of the American Statistical Association},
  101:1408--1417.

\bibitem[Goeman and Solari, 2011]{Goeman/Solari:2011}
Goeman, J.~J. and Solari, A. (2011).
\newblock Multiple testing for exploratory research.
\newblock {\em Statistical Science}, 26:584--597.
\newblock Correction: 28:464.

\bibitem[Gr\"unwald et~al., 2020]{GDK20}
Gr\"unwald, P., de~Heide, R., and Koolen, W.~M. (2020).
\newblock Safe testing.
\newblock Technical Report
  \href{https://arxiv.org/abs/1906.07801}{arXiv:1906.07801}.

\bibitem[Guo et~al., 2014]{Guo2014}
Guo, W., He, L., and Sarkar, S.~K. (2014).
\newblock Further results on controlling the false discovery proportion.
\newblock {\em The Annals of Statistics}, 42(3):1070--1101.

\bibitem[Howard et~al., 2020]{HRMS20}
Howard, S.~R., Ramdas, A., McAuliffe, J., and Sekhon, J. (2020).
\newblock Time-uniform {C}hernoff bounds via nonnegative supermartingales.
\newblock {\em Probability Surveys}, 17:257--317.

\bibitem[Howard et~al., 2021]{HRMS21}
Howard, S.~R., Ramdas, A., McAuliffe, J., and Sekhon, J. (2021).
\newblock Time-uniform, nonparametric, nonasymptotic confidence sequences.
\newblock {\em The Annals of Statistics}, 49(2):1055--1080.

\bibitem[Jamieson and Jain, 2018]{JJ18}
Jamieson, K.~G. and Jain, L. (2018).
\newblock A bandit approach to sequential experimental design with false
  discovery control.
\newblock In {\em Proceedings of the 32nd International Conference on Neural
  Information Processing Systems}, pages 3660--3670.

\bibitem[Jiang and Yang, 2013]{JY13}
Jiang, T. and Yang, F. (2013).
\newblock Central limit theorems for classical likelihood ratio tests for
  high-dimensional normal distributions.
\newblock {\em The Annals of Statistics}, 41(4):2029--2074.

\bibitem[Kass and Raftery, 1995]{KR95}
Kass, R.~E. and Raftery, A.~E. (1995).
\newblock Bayes factors.
\newblock {\em Journal of the American Statistical Association},
  90(430):773--795.

\bibitem[Katsevich et~al., 2021]{KSB18}
Katsevich, E., Sabatti, C., and Bogomolov, M. (2021).
\newblock Filtering the rejection set while preserving false discovery rate
  control.
\newblock {\em Journal of the American Statistical Association}.

\bibitem[Ramdas et~al., 2019a]{RBWJ19}
Ramdas, A., Barber, R.~F., Wainwright, M.~J., and Jordan, M.~I. (2019a).
\newblock A unified treatment of multiple testing with prior knowledge using
  the p-filter.
\newblock {\em The Annals of Statistics}, 47:2790--2821.

\bibitem[Ramdas et~al., 2019b]{RCJW19}
Ramdas, A., Chen, J., Wainwright, M.~J., and Jordan, M.~I. (2019b).
\newblock A sequential algorithm for false discovery rate control on directed
  acyclic graphs.
\newblock {\em Biometrika}, 106(1):69--86.

\bibitem[Ramdas et~al., 2020]{ramdas2020admissible}
Ramdas, A., Ruf, J., Larsson, M., and Koolen, W. (2020).
\newblock Admissible anytime-valid sequential inference must rely on
  nonnegative martingales.
\newblock {\em arXiv preprint arXiv:2009.03167}.

\bibitem[Ramdas et~al., 2021]{ramdas2021can}
Ramdas, A., Ruf, J., Larsson, M., and Koolen, W.~M. (2021).
\newblock Testing exchangeability: fork-convex hulls, supermartingales and
  e-processes.
\newblock {\em International Journal of Approximate Reasoning}.

\bibitem[Sarkar, 2008]{Sarkar08}
Sarkar, S.~K. (2008).
\newblock Two-stage stepup procedures controlling {FDR}.
\newblock {\em Journal of Statistical Planning and Inference},
  138(4):1072--1084.

\bibitem[Shafer, 2021]{S20}
Shafer, G. (2021).
\newblock Testing by betting: A strategy for statistical and scientific
  communication.
\newblock {\em Journal of the Royal Statistical Society: Series A (Statistics
  in Society)}, 184(2):407--431.

\bibitem[Shafer et~al., 2011]{SSVV11}
Shafer, G., Shen, A., Vereshchagin, N., and Vovk, V. (2011).
\newblock Test martingales, {B}ayes factors and p-values.
\newblock {\em Statistical Science}, 26(1):84--101.

\bibitem[Storey, 2002]{S02}
Storey, J. (2002).
\newblock A direct approach to false discovery rates.
\newblock {\em Journal of the Royal Statistical Society: Series B (Statistical
  Methodology)}, 64(3):479--498.

\bibitem[Su, 2018]{Su18}
Su, W. (2018).
\newblock The {FDR}-linking theorem.
\newblock Technical Report
  \href{https://arxiv.org/abs/1812.08965}{arXiv:1812.08965}.

\bibitem[Vovk et~al., 2020]{VWW20}
Vovk, V., Wang, B., and Wang, R. (2020).
\newblock Admissible ways of merging p-values under arbitrary dependence.
\newblock Technical Report
  \href{https://arxiv.org/abs/2007.14208}{arXiv:2007.14208}.
\newblock To appear in the \emph{Annals of Statistics}.

\bibitem[Vovk and Wang, 2019]{Vovk19}
Vovk, V. and Wang, R. (2019).
\newblock Confidence and discoveries with e-values.
\newblock {\em arXiv preprint arXiv:1912.13292}.

\bibitem[Vovk and Wang, 2020]{VW19}
Vovk, V. and Wang, R. (2020).
\newblock Combining p-values via averaging.
\newblock {\em Biometrika}, 107(4):791--808.

\bibitem[Vovk and Wang, 2021]{VW20}
Vovk, V. and Wang, R. (2021).
\newblock E-values: Calibration, combination and applications.
\newblock {\em The Annals of Statistics}, 49(3):1736--1754.

\bibitem[Wasserman et~al., 2020]{WRB20}
Wasserman, L., Ramdas, A., and Balakrishnan, S. (2020).
\newblock Universal inference.
\newblock {\em Proceedings of the National Academy of Sciences},
  117(29):16880--16890.

\bibitem[Waudby-Smith and Ramdas, 2020a]{WSR20a}
Waudby-Smith, I. and Ramdas, A. (2020a).
\newblock Confidence sequences for sampling without replacement.
\newblock {\em Neural Information Processing Systems}.

\bibitem[Waudby-Smith and Ramdas, 2020b]{WSR20b}
Waudby-Smith, I. and Ramdas, A. (2020b).
\newblock Estimating means of bounded random variables by betting.
\newblock {\em arXiv preprint arXiv:2010.09686}.

\bibitem[Wilks, 1938]{Wilks38}
Wilks, S.~S. (1938).
\newblock The large-sample distribution of the likelihood ratio for testing
  composite hypotheses.
\newblock {\em The Annals of Mathematical Statistics}, 9(1):60--62.

\bibitem[Xu et~al., 2021]{xu2021unified}
Xu, Z., Wang, R., and Ramdas, A. (2021).
\newblock A unified framework for bandit multiple testing.
\newblock {\em Neural Information Processing Systems}.

\end{thebibliography}

 \appendix 
 
 \section{Simulations for correlated z-tests}\label{app:num}
  
We provide further simulation results in a classic setting, where test statistics $X_k,~k\in\mathcal K$  are generated  from correlated z-tests.  
The null hypotheses are $ \mathrm{N}(0,1)$ and the alternatives are $ \mathrm{N}(\delta,1)$,
where we take $\delta:=-3$ throughout the section.
We generate $ K-K_0$ observations from  the alternative distribution $ \mathrm{N}(\delta,1)$
and then $K_0$ observations from   the null distribution $\mathrm{N}(0,1)$.
  
We take the p-values as
\begin{equation}\label{eq:base-p}
  P(x)
  :=
  \Phi(x),
\end{equation} 
where $x  \in \{X_k: k\in \mathcal K\}$;
these are the p-values found using the most powerful test given by the Neyman--Pearson lemma. 
The raw e-values are, following \cite{VW20}, the likelihood ratios
\begin{equation}\label{eq:base-e}
  E(x)
  :=
  \frac{\exp(-(x-\delta)^2/2)}{\exp(-x^2/2)}
  =
  \exp(\delta x - \delta^2/2)
\end{equation}
of the alternative to the null density.

We report the numbers of discoveries for  the  (base) BH, BY, e-BH (boosted), and base e-BH procedures, in several different settings in Table~\ref{tab:ztest1}. 
In particular, we consider two settings of negative correlation (thus, PRDS fails to hold). 
We set the target FDR level $\alpha\in \{10\%,5\%,2\%\}$.
All numbers in Table~\ref{tab:ztest1} are produced an the average from 1,000 trials. 

From  the simulation results in Table~\ref{tab:ztest1}, we can see that the BH procedure has slightly more discoveries than the e-BH procedure for PRDS e-values via \eqref{eq:enhance-1}; recall that we do not expect   e-BH   to outperform   BH    when PRDS and precise (standard uniform) p-values are available (cf.~Proposition~\ref{prop:psi3}).
The e-BH procedure  for arbitrarily dependent (AD) e-values 
performs slightly better than the BY procedure in most settings, 
and this advantage is more pronounced in the case of a large experiment (Table~\ref{tab:sim-3}, left), as the BY correction $\ell_K$ is penalized by a large $K$.
Only in the case of sparse signal and small $\alpha$ (Table~\ref{tab:sim-3}, last two columns), BY performs better than e-BH.
This is consistent with Example~\ref{ex:compare} (with different e-values from our experiments), where we see 
that BY outperforms e-BH if the proportion of rejections is very small.

These simulation results suggest that, even in case p-values are available, the e-BH procedure (with boosted e-values) has competitive performance in most situations, especially when PRDS fails to hold (thus the base BH does not have a correct FDR in theory).

\begin{table}[th] 
  \begin{center} 
    \small \caption{Simulation results for correlated z-tests, where $\rho_{i,j}$ is the correlation between two test statistics $X_i$ and $X_j$ for $i\ne j$.   
Each cell gives the number of rejections and, in parentheses, the realized FDP (in $\%$). 
   Each number is computed over an average of 1,000 trials.  
  }
  \setlength\tabcolsep{4pt} 
    \label{tab:ztest1}  
             \begin{subtable}[t]{0.99\textwidth} 
 \begin{center} \caption{Independent and positively correlated  tests, $K=1000$, $K_0=800$}
  \label{tab:sim-1} 
    \centerline{   \begin{tabular}{l|rrr|rrr}
    &\multicolumn{3}{c|}{$\rho_{ij}=0 $} &\multicolumn{3}{c}{$\rho_{ij}=0.5$}\\
       \rule{0pt}{3ex}     & \multicolumn{1}{c}{$\alpha= 10\%$} &  \multicolumn{1}{c}{$\alpha=5\%$} & 
        \multicolumn{1}{c|}{$\alpha=2\%$} &  \multicolumn{1}{c}{$\alpha=10\%$} &  \multicolumn{1}{c}{$\alpha=5\%$} &  \multicolumn{1}{c}{$\alpha=2\%$} \\
      \hline
 BH    &  177.3 (8.01)& 148.7 (4.07) &115.0 (1.63)  & 180.0 (7.00)& 144.8 (3.64) &109.8 (1.50)\\ 
     e-BH PRDS &171.8 (7.07) &147.6 (3.95) & 114.6 (1.62) &170.2 (5.71) &142.5 (3.35) &108.0 (1.50)\\\hline
BY        &101.1 (1.10)  &78.8 (0.57) & 53.2 (0.22)  & 96.6 (1.03) &76.7  (0.50)  &55.0 (0.20)\\
e-BH AD &  109.4 (1.41) & 85.4 (0.68)  &54.6 (0.24) &103.1 (1.32)  &81.4 (0.70)  &56.6 (0.28)\\\hline
base e-BH      & 97.5 (1.00) &70.6 (0.43)  &36.9 (0.11)  &91.9 (0.97)  &69.1 (0.45)  &43.6 (0.16)\\ 
    \end{tabular}  }
    \end{center}
      \end{subtable} 
      \\
   ~
    \\
      \begin{subtable}[t]{0.99\textwidth} 
    \begin{center}  \caption{Independent tests with large number of hypotheses}   \label{tab:sim-3} 
             \centerline{   \begin{tabular}{l|rrr|rrr}
    &\multicolumn{3}{c|}{$K=20,000$, $K_0=10,000$} &\multicolumn{3}{c}{$K=20,000$, $K_0=19,000$}\\
       \rule{0pt}{3ex}     & \multicolumn{1}{c}{$\alpha= 10\%$} &  \multicolumn{1}{c}{$\alpha=5\%$} & 
        \multicolumn{1}{c|}{$\alpha=2\%$} &  \multicolumn{1}{c}{$\alpha=10\%$} &  \multicolumn{1}{c}{$\alpha=5\%$} &  \multicolumn{1}{c}{$\alpha=2\%$} \\
      \hline 
BH       & 9567 (5.00) & 8564 (2.49) &7164 (1.00)  &   681.3    (9.58)  & 520.2 (4.79) &357.7 (1.93)\\
e-BH PRDS & 9092 (3.60) &8330 (2.13) &7124 (0.98) &  681.3 (9.58) &509.3 (4.54) &312.1 (1.40)\\\hline
BY     &   5956 (0.48) &4818 (0.24) &3417 (0.10) &  254.1 (0.89) &177.6 (0.46) &103.1 (0.19)\\
e-BH AD  & 6811 (0.80) &5809 (0.44) &4384 (0.18) &271.0 (1.02) &159.5 (0.39) & 51.4 (0.07)\\\hline
base  e-BH      &6426 (0.64) &5234 (0.31) &3509 (0.10) &224.8 (0.69) &109.2 (0.21)  &16.4 (0.01) \\ 
    \end{tabular}  }
  \end{center}
      \end{subtable}  
      \\~~ \\ 
      \begin{subtable}[t]{0.98\textwidth} 
    \begin{center}  \caption{Negatively correlated tests, $K=1000$, $K_0=800$.}     \label{tab:sim-5}  
        \centerline{   \begin{tabular}{l|rrr|rrr}
    &\multicolumn{3}{c|}{$ \rho_{ij}=-1/(K-1)$} &\multicolumn{3}{c}{$\rho_{ij}=-0.5\id_{\{|i-j|=1\}}$}\\
       \rule{0pt}{3ex}     & \multicolumn{1}{c}{$\alpha= 10\%$} &  \multicolumn{1}{c}{$\alpha=5\%$} & 
        \multicolumn{1}{c|}{$\alpha=2\%$} &  \multicolumn{1}{c}{$\alpha=10\%$} &  \multicolumn{1}{c}{$\alpha=5\%$} &  \multicolumn{1}{c}{$\alpha=2\%$} \\
      \hline 
BH        & 177.7 (8.14) &149.0 (4.09) &115.2 (1.61) &177.2 (8.10) &148.8 (4.00) &115.3 (1.62)\\
e-BH PRDS &172.0 (7.13) &147.9 (3.98) &114.9 (1.59) &171.5 (7.13) &147.7 (3.89) &114.9 (1.61)\\\hline
BY      &  101.2 (1.08) & 78.8 (0.52) & 53.3 (0.20) &101.3 (1.11) & 78.8 (0.56) & 53.2 (0.22)\\
e-BH AD &  109.7 (1.38) & 85.5 (0.65) & 54.6 (0.22) &109.8 (1.40) & 85.6 (0.69) & 54.6 (0.24)\\\hline
base e-BH      & 97.8 (0.98) & 70.7 (0.40) & 37.2 (0.11) &97.6 (0.99) & 70.7 (0.41) & 36.7 (0.12)\\ 
    \end{tabular}  }  \end{center} \end{subtable}  
  \end{center} 
 \end{table}

\section{Omitted proofs from the paper} \label{app:pf}

 \begin{proof}[Proof of Proposition \ref{prop:r1-1}]
 For $t\in [0,\infty)$, 
 let $f(t)$ be the number of true null hypotheses with an e-value $e'_k$ larger than or equal to $t$.
  Define the quantity
 \[ 
 Q(t) =  t{R(t)}/K,
 \]
 and it is clear that 
 $$
 t_\alpha = \inf \{t\in [0,\infty):Q(t) \ge 1/\alpha\}.
 $$
 Clearly $t_\alpha \in [1/\alpha,K/\alpha]$ since $Q(t)\le t$ and $R(t)\ge 1$.
Since $Q$ only has downside jumps and $Q(0)=0$, we know $Q(t_\alpha)=1/\alpha$, and thus
\begin{align}\label{eq:e1prime}
t_\alpha R(t_\alpha)= \frac{K}{\alpha}.
\end{align} 

If $e'_{k}\ge t_\alpha$, then $H_k$ is rejected by the definition of e-BH.
 If $e'_{[k]} < t_\alpha$, then by definition of $Q$, we have 
 $$\frac{k e'_{[k]}}{K}  \le \frac{R(e'_{[k]}) e'_{[k]}}{K} =Q(e'_{[k]}) < 1/\alpha.$$
 Thus, each $H_k$ is rejected by $\cG(\alpha)$ if and only if $e'_k\ge t_\alpha$.
 \end{proof}

 \begin{proof}[Proof of Lemma \ref{lem:e-FDR}]
 Note that   $f(\mathbf X)\le X_k'$   on the event $\{X_k\ge f(\mathbf X)\}$.
 \begin{enumerate}[(i)]
\item  If $X_k$ is independent of $\mathbf X^{-k}$, then 
\begin{align*}
\E\left[f(\mathbf X) \id_{\{X_k\ge f(\mathbf X)\}} \mid \mathbf X^{-k}   \right]
&\le
\E\left[X_k' \id_{\{X_k\ge f(\mathbf X)\}}  \mid \mathbf X^{-k}   \right]    \le \E\left[X_k'\right] \le \E[X_k].
\end{align*}
\item  
Let 
 $g :x\mapsto \p( X_k'\ge x)$  and $P_k:=g(X_k')$. Clearly $P_k$ is a null p-value.  
For each $y\ge 0$, 
since $\mathbf X$ is PRDS on $X_k$ and $g\circ f$ is an increasing function,  the function $
x \mapsto \p( g\circ f(\mathbf X) \le y \mid X_k \ge x ) 
$
is decreasing on $[0,\infty)$.
Noting that $P_k$ is a decreasing function of $X_k$, 
the function $
t\mapsto \p( g\circ f(\mathbf X) \le y \mid P_k\le t) 
$
is increasing on $[0,1]$. 
Using Lemma 1 of \cite{RBWJ19}, we have  
 $$   \E\left[\frac{   \id_{\{ P_k\le g \circ   f(\mathbf X )\}} }{ g \circ  f(\mathbf X)    }  \right]   \le 1.$$ 
It follows that  \begin{align*}
\E[f(\mathbf X) \id_{\{E_k \ge f(\mathbf X)\}}] &
=  \E[  f(\mathbf X)  \id_{\{ E_k' \ge f(\mathbf X)\}}] 
\\&
\le \E[ f(\mathbf X)   \id_{\{ P_k\le g \circ  f(\mathbf X)\}}]
\\&
=  \E\left[   f(\mathbf X)    \p( E_k'\ge f(\mathbf X))    \frac{   \id_{\{ P_k\le g \circ  f(\mathbf X) \}} }{ g \circ    f(\mathbf X) }   \right] 
\\&
\le  \sup_{x \ge  0} x \p(X'_k\ge x)   \E\left[\frac{   \id_{\{ P_k\le g \circ  f(\mathbf X)   \}} }{ g \circ  f(\mathbf X)    }  \right]   \\ &\le   \sup_{x \ge  0} x \p(X'_k\ge x).
\end{align*} 
\item It follows directly that
$$
\E\left[f(\mathbf X) \id_{\{X_k\ge f(\mathbf X)\}}   \right]
\le
\E\left[X_k' \id_{\{X_k\ge f(\mathbf X)\}}   \right]  \le \E[ X_k' ] \le \E[X_k].
$$
\end{enumerate}
If $X_k$ is a null e-value, then $\E[X_k]\le1$ by definition.
 \end{proof}

\begin{proof}[Proof of Theorem \ref{th:e2}]

By Proposition \ref{prop:r1-1} and \eqref{eq:e1prime},  we obtain that the FDR of e-BH equals
\begin{align}\label{eq:e1}
\E\left[\frac{F_{\cG(\alpha)}}{R_{\cG(\alpha)} }\right] =
\E\left[\frac{ f(t_\alpha) }{R(t_\alpha)}\right]  
=\frac{\alpha}{ K}
\E\left[ t_\alpha f(t_\alpha) \right].
\end{align} 
We will apply Lemma~\ref{lem:e-FDR}   to  $\mathbf X=\mathbf E'$ and the function $f$ given by $f(\mathbf X)=t_\alpha$, where $t_\alpha$ is treated as a function of $\mathbf E'$.
Note that $I_f=\alpha^{-1} K/\mathcal K$ by 
  \eqref{eq:e1prime}.
In this  case, $   X_k' =  T(\alpha X_k)/\alpha $. 
 By
 Lemma~\ref{lem:e-FDR} (iii) and \eqref{eq:e1}, we have
 \begin{align*}
\E\left[\frac{F_{\cG(\alpha)}}{R_{\cG(\alpha)} }\right]   =\frac{\alpha}{ K}
\E\left[ \sum_{k\in \mathcal N}  t_\alpha \id_{\{E'_k\ge t_\alpha\}} \right]  \le \frac{1}{ K}\E\left[ \sum_{k\in \mathcal N}   X'_k \right]  
= \frac{ K_0}{ K}y_\alpha,
\end{align*}
thus showing the first inequality. 
If $E_1',\dots,E_K'$ are the raw e-values or  the boosted e-values obtained by \eqref{eq:enhance-2} or \eqref{eq:enhance-2p}, then $y_\alpha\le \alpha$ by construction.
 \end{proof}
 
 \begin{proof}[Proof of Theoroem \ref{th:e1}]
Write $\mathbf E'= (E'_1,\dots,E'_K)$.
For each $k\in \mathcal N$, denote by 
$$
z_{\alpha,k} :=\max_{x\in K/\mathcal K}  x  {\p(\alpha E'_k \ge x)}
=\max_{x\ge 1}  x \p(T(\alpha E'_k)\ge x) \le \E[T(\alpha E'_k)].
$$ 
Note that  $t_\alpha = f(\mathbf E')$ for a decreasing function $f$ with range $I_f=\alpha^{-1}K/\mathcal K$. 
Using the notation in Lemma~\ref{lem:e-FDR} with $\mathbf X=\mathbf E'$, we have 
$$X_k' =   \sup \{x\in I_f \cup\{0\}: x\le   E'_k\} =\frac{T(\alpha E'_k)}{\alpha} .$$ 
  Lemma~\ref{lem:e-FDR} (ii) gives 
$$\E[t_\alpha \id_{\{E'_k \ge t_\alpha\}}]  \le \sup_{x \ge 0} x \p(X'_k\ge x)
= \max_{x\ge 1}  \frac x \alpha  \p(T(\alpha E'_k)\ge x)  =  \frac{z_{\alpha,k}}{\alpha}.
 $$ 
Hence, by \eqref{eq:e1}, $$\E\left[\frac{F_{\cG(\alpha)}}{R_{\cG(\alpha)} }\right]= \frac{\alpha}{K} \E\left[ t_\alpha f(t_\alpha) \right]  = \frac{\alpha}{K}  \sum_{k\in \mathcal N}  \E[t_\alpha \id_{\{E'_k \ge t_\alpha\}}] \le    \frac{1}{K} \sum_{k\in \mathcal N}z_{\alpha,k} .$$
Note that  $z_\alpha \le y_\alpha $ by Markov's inequality. Hence, if $E'_1,\dots,E'_K$ are raw e-values, then $z_\alpha \le y_\alpha \le \alpha$.
If $E'_1,\dots,E'_K$ are boosted e-values, then $z_\alpha \le \alpha$ by construction. 
\end{proof}

\begin{proof}[Proof of Proposition \ref{prop:bk}]
Since $b_k =(\alpha q_{1-\alpha} (E_k))^{-1}$, \eqref{eq:e-cond} implies 
 $$
t\mapsto t \p( b_k  E_k\ge t) \mbox{~is decreasing on $[1/ \alpha ,\infty)$}.
 $$
Therefore, $z_{\alpha,k} (b_k) $ in \eqref{eq:zalphak} satisfies 
$z_{\alpha,k} (b_k) = \p(\alpha b_k E_k \ge 1)$.
Direct calculation gives 
 $$z_{\alpha,k} (b_k) = \p(\alpha  E_k \ge 1/b_k) = \p(\alpha  E_k \ge \alpha q_{1-\alpha} (E_k) )=\alpha.$$
 Since $E_k$ is continuously distributed, $b_k$ cannot be enlarged without violating \eqref{eq:enhance-1}.
\end{proof}

\begin{proof}[Proof of Theorem \ref{th:optimal}] 
Define 
$$\tau_\phi=\inf\{\tau \ge 0 : \phi(\tau) R(\tau) \ge K \}.$$
Similarly to Proposition \ref{prop:r1-1}, 
 $\cG(\phi)$ rejects $H_k$ if and only if $e_k\ge \tau_\phi$.  By Theorem \ref{th:e2}, the FDR of $\cG(\phi)$ satisfies 
\begin{equation}
\label{eq:general-e}\E\left[\frac{F_{\cG (\phi)}}{R_{\cG(\phi)} }\right]  = \frac{1}{ K}  \sum_{k\in \mathcal N} \E[\phi(\tau_\phi) \id_{\{E_k \ge\tau_\phi \}}].
 \end{equation}

First, from \eqref{eq:e-k2},
only the values of $\phi^{-1}(K/k)$  for $ k\in \mathcal K $ affects the testing procedure $\cG(\phi)$. 
Note that   $\cG(\alpha)$ uses the linear transform   $t\mapsto \alpha t$.
Therefore, if $\phi^{-1}(  x)\ge   x/\alpha$ for all $x\in K/\mathcal K$, then  $\cG(\phi)$ is dominated by the e-BH procedure $\cG(\alpha)$, i.e., $\cG(\phi)\subseteq \cG(\alpha)$.

Next, suppose for the purpose of contradiction that $\phi^{-1}(K/k_0) <K/(k_0 \alpha)  $ for some $k_0\in \mathcal K$. Write 
$$m =\phi^{-1} (K/k_0) 
\mbox{~~~and~~~} m' = \max \{m, K/k_0\}.$$ 
Note that $m\le m' < K/(k_0\alpha)$. 
Consider the following setting of e-values:
Let    $\mathcal N=\mathcal K$, i.e., all e-values are null.  
For $k\in \mathcal K$, set
$E_k= m' \id_{A_k}$ for some set $A_k$ with $\p(A_k)=1/m'$.
We design the sets $A_k$, $k\in \mathcal K$, in the following way:
for each $\omega\in \Omega$, $$|\{k\in \mathcal K: \omega\in A_k\}|=k_0,$$ i.e., exactly $k_0$ events out of $A_1,\dots,A_K$ occur together (the existence of such an arrangement is easy to verify).
Let $A=\bigcup_{k\in \mathcal K} A_k$, and we have $\p(A)=K/(k_0 m') \in (\alpha,1]$. 

If event $A$ happens, then exactly $k_0$ of $E_1,\dots,E_K$ take the value $m'$, and the rest take the value $0$.
 Hence, 
for $\tau>0$, we have $R(\tau)=\max( k_0\id_{\{\tau\le m'\}} \id_A,1)$.
If event $A$ happens, then  $\tau_\phi \le m'$ since $  \phi(m')R(m') 
\ge \phi(m) k_0 =  K$, and   $\tau_\phi\ge m$ since $\phi (\tau_\phi) \ge K/k_0$. 
Therefore, $\tau_\phi \in [m,m']$ if $A$ happens. 
 It follows that, for each $k$, 
 $$\E[\phi(\tau_\phi) \id_{\{E_k \ge\tau_\phi \}}]
 \ge 
 \E[\phi(m) \id_{\{E_k \ge m' \}} \id_{A_k} ]
 = \E[\phi(m) \id_{A_k}]=\frac{K}{k_0m'} > \alpha.
 $$
 Using the FDR guarantee $\alpha$ of $ \cG (\phi) $  and \eqref{eq:general-e}, this leads to 
 $$ \alpha  \ge \E\left[\frac{F_{\cG (\phi)}}{R_{\cG(\phi)} }\right]  =  \frac{1}{ K}  \sum_{k\in \mathcal K} \E[\phi(\tau_\phi) \id_{\{E_k \ge\tau_\phi \}}] >  \alpha,
 $$
 a contradiction.
 Hence, $\cG(\phi)$ is dominated by $\cG(\alpha)$.   
\end{proof}

\begin{proof}[Proof of Proposition~\ref{prop:psi}]

For $k\in \mathcal N$, let $\alpha = \E[T(\psi(P))]$  where $P$ is uniform on $[0,1]$.
If $\psi^{-1}(1)=0$,  then $\p(\psi(P_k)\ge 1)=0$ for $k\in \mathcal N$,
and thus $\cD(\psi)$ will never reject any hypotheses with a non-zero p-value,  leading to a zero FDR for both PRDS and arbitrarily dependent p-values.  
Next, assume $\psi^{-1}(1)>0$, which implies $\alpha >0$.

Define
\begin{equation}\label{eq:iide}E_k= \frac{T(\psi(P_k))}{\alpha}, ~~~~k\in \mathcal K,\end{equation}
which is a null e-value for $k\in \mathcal N$. 
Note that 
$$
 \psi(p_{(k)}) \ge \frac{K}{k } ~~~~ \Longleftrightarrow~~~~ e_{[k]} \ge \frac{K}{k \alpha}. 
$$
Hence, $\cG(\alpha)$ applied to $(e_1,\dots,e_K)$ is equivalent to $\cD(\psi)$ applied to $(p_1,\dots,p_K)$.
Note that, using the notation in  Sections~\ref{sec:eBH-arbitrary} and~\ref{sec:eBH-PRDS},  
\begin{align*}y_\alpha  
 &=\frac 1 {   K_0} \sum_{k\in \mathcal N}\E\left[   T( \psi(P_k))\right] 
  \le \sum_{j=1}^K \frac{K}{j}
\left(\psi^{-1}\left(\frac{K}{j}\right) - \psi^{-1}\left(\frac{K}{j-1}\right) \right) 
 = y_\psi, \end{align*}
 and
 $$z_\alpha = \max_{x\in K/\mathcal K}  x \p( \psi(P_k) \ge x) \le 
 \max_{x\in K/\mathcal K}  x   \psi^{-1}(x) 
 = z_\psi. $$ 
By Theorems~\ref{th:e2} and~\ref{th:e1},  the testing procedure $\cD(\psi)$ satisfies 
$$
\E\left[\frac{F_{\cD(\psi)}}{R_{\cD(\psi)} }\right] 
=\E\left[\frac{F_{\cG(\alpha)}}{R_{\cG(\alpha)} }\right] \le
\frac{ K_0}{K}y_\alpha \le  \frac{    K_0}{K} y_{\psi},
$$ 
and $y_\psi$ can be replaced by $z_\psi$ if the p-values are PRDS. 
\end{proof}

\begin{proof}[Proof of Proposition~\ref{prop:psi3}]
Consider the setup where all 
all p-values are null, and they are identical following a uniform distribution on $[0,1]$. Note that identical p-values satisfy PRDS. 
In this case, ${F_{\cD(\psi)}}/{R_{\cD(\psi)} } =1$ as soon as there is any discovery. 
 If $\psi^{-1}(1) >\alpha$, then 
$$\p(\psi (p_{(K)}) \ge 1) = \p( p_{(K)} \le \psi^{-1}(1) ) = \min\{\psi^{-1}(1),1\} > \alpha.$$
Hence, the probability of having a false discovery is more than $\alpha$, violating the assumption that $\E [ {F_{\cD(\psi)}}/{R_{\cD(\psi)} } ] \le  \alpha.$ Therefore, $\psi^{-1}(1) \le \alpha$.

If $\psi$  satisfies \eqref{eq:e-cond2}
 and $\psi^{-1}(1)\le \alpha$, 
 then we have $t\psi^{-1}(t)\le \alpha$ for all $t\ge 1$.
 Hence, $\psi(p)\le \alpha/p$ for $p\in (0,\alpha]$.
As a consequence, $\cD(\psi)\subseteq \cD(\alpha)$.  
\end{proof}

\section{Using e-BH with multiple decreasing transforms} \label{app:multi}

We can generalize the p-testing procedure $\cD(\psi)$ in Section~\ref{sec:gen-p} to the case of multiple decreasing transforms, which we briefly describe here.

Let $\psi_1,\dots,\psi_K$ be decreasing transforms,
and write $r_k:=\psi_k(p_k)$, $k\in \mathcal K$.
Design a p-testing procedure $\cD(\psi_1,\dots,\psi_K)$ by rejecting the $k ^*$ hypotheses with the largest $r_k$, where
$$
k^*=\max\left\{k\in \mathcal K:  r_{[k]} \ge \frac{K}{k }\right\},$$
with the convention $\max(\varnothing)=0.$ 
Here, we are not rejecting hypotheses with the smallest p-values (a hypothesis with a smaller p-value may not be rejected before one with a larger p-value), but rather those with  the largest values of $r_k$; thus the procedure is no longer a step-up procedure in Section \ref{sec:gen-p}.
Recall that for a decreasing transform $\psi$,  $y_\psi$ is defined by \eqref{eq:ypsi},
and  $z_\psi$ is defined by \eqref{eq:zpsi}.
The following   result is a stronger version  of Proposition ~\ref{prop:psi}.
 
\begin{proposition}
\label{prop:psi-multi}
  For arbitrary p-values and   decreasing transforms $\psi_1,\dots,\psi_K$,  
   the testing procedure $\cD(\psi_1,\dots,\psi_K)$ satisfies 
$$
\E\left[\frac{F_{\cD(\psi_1,\dots,\psi_K)}}{R_{\cD(\psi_1,\dots,\psi_K)} }\right] \le \frac{1}{K}  \sum_{k\in \mathcal N} y_{\psi_k}.
$$ 
If the p-values are PRDS, then
$$
\E\left[\frac{F_{\cD(\psi_1,\dots,\psi_K)}}{R_{\cD(\psi_1,\dots,\psi_K)} }\right] \le \frac{   1}{K}\sum_{k\in \mathcal N} z_{\psi_k}.
$$ 
\end{proposition}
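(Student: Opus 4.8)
The plan is to run the proof of Proposition~\ref{prop:psi} coordinate by coordinate, turning each p-value into an e-value via its own transform. Fix any $\alpha\in(0,1)$ (its value is immaterial) and set $E_k':=\psi_k(P_k)/\alpha$ for each $k\in\mathcal K$, with realizations $e_k'=\psi_k(p_k)/\alpha=r_k/\alpha$. Since $r\mapsto r/\alpha$ is an increasing bijection, rejecting the $k$ hypotheses with the largest $r_j$ is the same as rejecting those with the largest $e_j'$, and the threshold $r_{[k]}\ge K/k$ defining $\cD(\psi_1,\dots,\psi_K)$ becomes $e'_{[k]}\ge K/(k\alpha)$, which is exactly \eqref{eq:e-k}. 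Hence $\cD(\psi_1,\dots,\psi_K)$ applied to $(p_1,\dots,p_K)$ coincides with the base e-BH procedure $\cG(\alpha)$ applied to the nonnegative vector $(e_1',\dots,e_K')$, and it suffices to analyze the latter.

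For the arbitrary-dependence bound, apply Theorem~\ref{th:e2} to $E_1',\dots,E_K'$. Since $T(\alpha E_k')=T(\psi_k(P_k))$, it gives that the FDR of $\cD(\psi_1,\dots,\psi_K)$ is at most $\tfrac1K\sum_{k\in\mathcal N}\E[T(\psi_k(P_k))]$, so it remains to show $\E[T(\psi_k(P_k))]\le y_{\psi_k}$ for each null $k$. For a uniform $U$ on $[0,1]$ this is an identity: writing $\E[T(\psi_k(U))]=\sum_{j=1}^{K}\tfrac Kj\,\p\big(\psi_k(U)\in[K/j,\,K/(j-1))\big)$ (with $K/0:=\infty$) and using $\p(\psi_k(U)\ge t)=\psi_k^{-1}(t)$, the sum telescopes to $\psi_k^{-1}(1)+\sum_{j=1}^{K-1}\tfrac{K}{j(j+1)}\psi_k^{-1}(K/j)=y_{\psi_k}$, which is precisely the computation already carried out in the proof of Proposition~\ref{prop:psi}. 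Since a null $P_k$ is stochastically larger than $U$ and $T\circ\psi_k$ is decreasing, $\E[T(\psi_k(P_k))]\le\E[T(\psi_k(U))]=y_{\psi_k}$, finishing this case.

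For the PRDS bound, first check that $\mathbf E'$ is a vector of PRDS e-values. Each coordinate map $p\mapsto\psi_k(p)/\alpha$ is decreasing, so the preimage $A'\subseteq\R^K$ of a decreasing set $A\subseteq\R^K$ under $\mathbf p\mapsto(\psi_1(p_1)/\alpha,\dots,\psi_K(p_K)/\alpha)$ is an increasing set, while $\{E_k'\ge x\}=\{P_k\le\psi_k^{-1}(\alpha x)\}$; composing the increasing map $y\mapsto\p(\mathbf P\in A'\mid P_k\le y)$ with the decreasing map $x\mapsto\psi_k^{-1}(\alpha x)$ shows that $x\mapsto\p(\mathbf E'\in A\mid E_k'\ge x)$ is decreasing. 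This is exactly the monotone invariance of PRDS recorded in Section~\ref{sec:2} (only decreasingness of the per-coordinate transform, not the calibrator normalization, is used there). Now apply Theorem~\ref{th:e1}: using $\alpha E_k'=\psi_k(P_k)$ it bounds the FDR by $\tfrac1K\sum_{k\in\mathcal N}\max_{x\in K/\mathcal K}x\,\p(\psi_k(P_k)\ge x)$, and since $\p(\psi_k(P_k)\ge x)=\p(P_k\le\psi_k^{-1}(x))\le\psi_k^{-1}(x)$ for null $P_k$, each summand is at most $\max_{x\in K/\mathcal K}x\,\psi_k^{-1}(x)=z_{\psi_k}$, giving the claimed PRDS bound.

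The one place that needs care is the identification in the first paragraph: because the transforms $\psi_k$ may differ, ordering the $r_k=\psi_k(p_k)$ is no longer the same as ordering the $p_k$, so $\cD(\psi_1,\dots,\psi_K)$ is \emph{not} a step-up procedure in the p-values, and one must carry the argument through the $r_k$'s (equivalently the $e_k'$'s) rather than the $p_k$'s. Once this identification with $\cG(\alpha)$ is in hand, the rest is a coordinatewise repetition of the single-transform argument of Proposition~\ref{prop:psi}, and I expect no further obstacle.
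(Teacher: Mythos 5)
Your proof is correct and follows essentially the same route as the paper's: transform each p-value into an e-value via its own $\psi_k$, identify $\cD(\psi_1,\dots,\psi_K)$ with the base e-BH procedure applied to the transformed values, and invoke Theorems~\ref{th:e2} and~\ref{th:e1} together with the per-coordinate bounds $\E[T(\psi_k(P_k))]\le y_{\psi_k}$ and $\max_{x\in K/\mathcal K}x\,\p(\psi_k(P_k)\ge x)\le z_{\psi_k}$. The only cosmetic difference is that the paper normalizes by $\beta=\max_{k\in\mathcal N}\E[T(\psi_k(P_k))]$ (so the $E_k$ are bona fide e-values) while you use an arbitrary $\alpha$, which is equally valid since both theorems are stated for arbitrary nonnegative random variables.
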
  
\begin{proof}
The proof is similar to that of Proposition~\ref{prop:psi}. 
  Define
$$E_k= \frac{T(\psi_k(P_k))}{\beta }, ~~~~k\in \mathcal K,$$
where  $\beta = \max_{k\in \mathcal N} \E[T(\psi_k(P_k))]$.
Clearly, $E_k$ 
  is an e-value for $k\in \mathcal N$. 
Note that 
$$
r_{[k]} \ge \frac{K}{k } ~~~~ \Longleftrightarrow~~~~ e_{[k]} \ge \frac{K}{k \beta}. 
$$
Therefore, $\cG(\beta)$ applied to $(e_1,\dots,e_K)$ is equal to $\cD(\psi_1,\dots,\psi_K)$ applied to $(p_1,\dots,p_K)$. 
Using Theorems~\ref{th:e2} and~\ref{th:e1},  $\cD(\psi_1,\dots,\psi_K)$ satisfies 
\begin{align*}
\E\left[\frac{F_{\cD(\psi_1,\dots,\psi_K)}}{R_{  \cD(\psi_1,\dots,\psi_K)} }\right] 
& =\E\left[\frac{F_{\cG(\beta)}}{R_{\cG(\beta)} }\right]    \le
\frac{ K_0}{K}y_\beta    = 
 \frac{   1}{K}\sum_{k\in \mathcal N} y_{\psi_k},
  \end{align*} 
and $y_{\psi_k}$ can be replaced by $z_{\psi_k}$ if the p-values are PRDS. 
\end{proof}

\end{document}